\newtheorem{thm}{Theorem}[section]
\newtheorem{prop}[thm]{Proposition}
\newtheorem{lem}[thm]{Lemma}
\theoremstyle{definition}
\newtheorem{definition}[thm]{Definition}
\theoremstyle{remark}
\newtheorem{remark}[thm]{Remark}
\numberwithin{equation}{section}
\newcommand{\gal}{\mbox{gal}}
\newcommand{\Gal}{\mbox{Gal}}
\newcommand{\cl}{\mbox{cl}}
\newcommand{\Cl}{\mbox{Cl}}
\newcommand{\Hom}{\mbox{Hom}}
\newcommand{\Map}{\mbox{Map}}
\newcommand{\lift}{\overline}
\newcommand{\cO}{\mathcal{O}}
\begin{document}

\large

\title{Realizable Classes and Embedding Problems}

\author{Cindy (Sin Yi) Tsang}
\address{Department of Mathematics, University of California, Santa Barbara}
\email{cindytsy@math.ucsb.edu}
\urladdr{http://sites.google.com/site/cindysinyitsang} 

\date{\today}

\begin{abstract}Let $K$ be a number field with ring of integers $\cO_K$ and let $G$ be a finite group. Given a $G$-Galois $K$-algebra $K_h$, let $\cO_h$ denote its ring of integers. If $K_h/K$ is tame, then a classical theorem of E. Noether implies that $\cO_h$ is locally free over $\cO_KG$ and hence defines a class in the locally free class group of $\cO_KG$. Write $R(\cO_KG)$ for the set of all such classes. For $G$ abelian, by combining the work of J. Brinkhuis and L. McCulloh, we will prove that the structure of $R(\cO_KG)$ is related to the study of embedding problems.
\end{abstract}

\maketitle

\tableofcontents

\section{Introduction and Preliminaries}\label{intro}

Let $K$ be a number field with ring of integers $\cO_K$ and let $G$ \mbox{be a finite group.} Let $K^c$ denote a fixed algebraic closure of $K$ and let $\Omega_K:=\Gal(K^c/K)$ act trivially on $G$. Then, the set of all isomorphism classes of $G$-Galois $K$-algebras (see Subsection~\ref{galois} for a brief review) is in bijective correspondence with the pointed Galois cohomology set $H^1(\Omega_K,G)$. Given an element $h\in H^1(\Omega_K,G)$, we will write $K_h$ for a Galois algebra representative of $h$ and $\cO_h$ for the ring of integers in $K_h$.

If $K_h/K$ is tame, then a classical theorem of E. Noether implies that $\cO_h$ is locally free over $\cO_KG$ and hence defines a class $\cl(\cO_h)$ in the locally free class group $\Cl(\cO_KG)$ of $\cO_KG$. Such a class in $\Cl(\cO_KG)$ is called \emph{realizable}. Set
\[
H_t^1(\Omega_K,G):=\{h\in H^1(\Omega_K,G)\mid K_h/K\mbox{ is tame}\}.
\]
Then, there is a natural map
\begin{equation}\label{gal}
\gal:H^1_t(\Omega_K,G)\longrightarrow \mbox{Cl}(\cO_KG);\hspace{1em}\gal(h):=\cl(\cO_h)
\end{equation}
whose image is equal to the set
\[
R(\cO_KG):=\{\cl(\cO_h):h\in H^1_t(\Omega_K,G)\}
\]
of all realizable classes in $\Cl(\cO_KG)$. 

In \cite{B}, J. Brinkhuis related the study of realizable classes to that of embedding problems as follows. Let $K/k$ be a Galois subextension of $K$ and define $\Sigma:=\Gal(K/k)$. 

\begin{definition}\label{embedproblem}Given a group extension $E$ of $\Sigma$ by $G$, say
\[
\begin{tikzcd}[column sep=1cm, row sep=1.5cm]
E:\hspace{0.5cm}
1 \arrow{r} &
G \arrow{r} &
\Gamma \arrow{r} &
\Sigma\arrow{r} &
1,
\end{tikzcd}
\]
a \emph{solution} to the embedding problem $(K/k,G,E)$ is a Galois extension $N/K$ such that $N/k$ is also Galois and that there are isomorphisms $\Gal(N/K)\simeq G$ and $\Gal(N/k)\simeq\Gamma$ making the diagram
\[
\begin{tikzpicture}[baseline=(current bounding box.center)]
\node at (-1,3) [name=A] {$1$};
\node at (13,3) [name=B] {$1$};
\node at (-1,1) [name=C] {$1$};
\node at (13,1) [name=D] {$1$};
\node at (2,3) [name=1] {$\Gal(N/K)$};
\node at (6,3) [name=2] {$\Gal(N/k)$};
\node at (10,3) [name=3] {$\Gal(K/k)$};
\node at (2,1) [name=4] {$G$};
\node at (6,1) [name=5] {$\Gamma$};
\node at (10,1) [name=6] {$\Sigma$};
\path[dashed,font=\normalsize]
(2) edge node[auto] {} (3);
\path[->]
(A) edge node[auto] {} (1)
(C) edge node[auto] {} (4)
(3) edge node[auto] {} (B)
(6) edge node[auto] {} (D)
(1) edge node[auto] {} (2)
(2) edge node[auto] {} (3)
(4) edge node[auto] {} (5)
(5) edge node[auto] {} (6);
\path[<->]
(1) edge node[auto] {$\simeq$} (4)
(2) edge node[auto] {$\simeq$} (5);
\draw[double equal sign distance]
(3) -- (6);
\end{tikzpicture}
\]
commute. If in addition $N/K$ is tame, then we will call $N/K$ a \emph{tame solution}.
\end{definition}

Suppose now that $G$ is abelian. Then, the pointed set $H^1(\Omega_K,G)$ is equal to $\Hom(\Omega_K,G)$ and hence has a group structure. Let $K^t$ denote the maximal tamely ramified extension of $K$ contained in $K^c$ and let $\Omega_K^t:=\Gal(K^t/K)$ act trivially on $G$. Then, the subset $H_t^1(\Omega_K,G)$ may be naturally identified with $\Hom(\Omega_K^t,G)$ (see Remark~\ref{tamesubgp} below) and in particular is a subgroup of $H^1(\Omega_K,G)$. We note that the map $\gal$ is not a homomorphism in general, but is only \emph{weakly multiplicative} in the following sense. Let $M_K$ denote the set of primes in $\cO_K$. For $h\in \Hom(\Omega_K^t,G)$, define
\begin{equation}\label{d(h)}
d(h):=\{v\in M_K\mid K_h/K\mbox{ is ramified at $v$}\}.
 \end{equation}
Then, for all $h_1,h_2\in\Hom(\Omega_K^t,G)$, we have
\begin{equation}\label{galweak}
\gal(h_1h_2)=\gal(h_1)\gal(h_2)\hspace{1cm}\mbox{whenever }d(h_1)\cap d(h_2)=\emptyset.
\end{equation}
This weak multiplicativity of $\gal$ was first proved by Brinkhuis in \cite[Proposition 3.10]{B} and it also follows from the result \cite[Theorem 6.7]{M} of L. McCulloh.

In what follows, we will fix a left $\Sigma$-module structure of $G$. In \cite[Theorem 5.1]{B}, Brinkhuis constructed a commutative diagram
\begin{equation}\label{bd}
\begin{tikzcd}[column sep=1.5cm, row sep=2.25cm]
H^1(\Gal(K^t/k),G)\arrow{r}{\mbox{res}}
& \Hom(\Omega_K^t,G)^\Sigma\arrow{r}[font=\large]{tr}\arrow[swap]{d}{\mbox{gal}}
& H^2(\Sigma,G)\arrow{d}[font=\large]{i^*}\\
&\mbox{Cl}(\cO_KG)^\Sigma\arrow{r}[font=\large]{\xi}
&H^2(\Sigma,(\cO_KG)^\times)
\end{tikzcd}
\end{equation}
(see Section~\ref{basicdiagram} below for the construction and notation), where the top row is \par\noindent exact and all of the maps except possibly $\gal$ are homomorphisms. We will refer to diagram (\ref{bd}) as the \emph{basic diagram}.

\begin{remark}\label{bdmodified}Diagram (\ref{bd}) is a slightly modified and abridged version of the basic diagram constructed by Brinkhuis in \cite[Theorem 5.1]{B}. For example, the Picard group of $\mathcal{O}_KG$ was used in place of the locally free class group of $\cO_KG$, but these two groups are canonically isomorphic when $G$ is abelian (see \cite[Theorem 55.26]{CR2}, for example). In Theorem~\ref{commutes}, we will give a proof of the facts that (\ref{bd}) commutes, that the top row is exact, and that all of the maps in (\ref{bd}) except possibly $\gal$ are homomorphisms.
\end{remark}

The commutativity of (\ref{bd}) relates the study of realizable classes to that of embedding problems as follows. Let $h\in\Hom(\Omega_K^t,G)^\Sigma$ be given and assume that $h$ is surjective, in which case $K_h$ is isomorphic to $L:=(K^t)^{\ker(h)}$. We will show in Proposition~\ref{Brelation} below that $L$ is a tame solution to the embedding problem $(K/k,G,E_h)$, where the equivalence class of $E_h$ is determined by $tr(h)$. Now, suppose that $i^*$ is injective (as is shown in \cite[Theorem 4.1]{B2}, this is so when $K$ is a C.M. field and when $G$ or $\Sigma$ has odd order). If $tr(h)\neq 1$ (which corresponds to $E_h$ being non-split), then $\cl(\cO_h)\neq 1$ as well because (\ref{bd}) commutes and $\xi$ is a homomorphism.
  
We continue to assume that $G$ is abelian. In \cite[Theorem 6.17 and Corollary 6.20]{M}, McCulloh gave a characterization of the set $R(\cO_KG)$ and showed that it is a subgroup of $\Cl(\cO_KG)$. It is natural to ask whether the group structure of $R(\cO_KG)$ is also related to the study of embedding problems. More precisely, consider the subsets
\begin{align}\label{Srealizable}
R_\Sigma(\cO_KG)&:=\{\cl(\cO_h):h\in \Hom(\Omega_K^t,G)^\Sigma\}\\\notag
R_s(\cO_KG)&:=\{\cl(\cO_h):h\in \Hom(\Omega_K^t,G)^\Sigma\mbox{ and }tr(h)=1\}
\end{align}
of $R(\cO_KG)$. The classes in $R_\Sigma(\cO_KG)$ are called \emph{$\Sigma$-realizable}. We want to determine whether the sets above are subgroups of $R(\cO_KG)$, and if so, whether the group structure of $R_\Sigma(\cO_KG)/R_s(\cO_KG)$ is related to that of $H^2(\Sigma,G)$.

By combining the work Brinkhuis and McCulloh, we will prove the following partial result. Given a set $V$ of primes in $\cO_K$, define

\[
\Hom(\Omega_K^t,G)^\Sigma_V:=\{h\in \Hom(\Omega_K^t,G)^\Sigma\mid K_h/K\mbox{ is unramified at all $v\in V$}\}
\]
and
\begin{align*}
R_\Sigma(\cO_KG)_V&:=\{\cl(\cO_h):h\in \Hom(\Omega_K^t,G)_V^\Sigma\};\\
R_s(\cO_KG)_V&:=\{\cl(\cO_h):h\in \Hom(\Omega_K^t,G)_V^\Sigma\mbox{ with }tr(h)=1\}.
\end{align*}
Write $\exp(G)$ for the exponent of the group $G$.

\begin{thm}\label{thm}Let $K/k$ be a Galois extension of number fields and let $G$ be a finite abelian group. Let $\Sigma:=\Gal(K/k)$ act trivially on $G$ \mbox{on the left and} let $V=V_K$ denote the set of primes in $\cO_K$ which are ramified over $k$. Assume also that $k$ contains all $\exp(G)$-th roots of unity.
\begin{enumerate}[(a)]
\item The sets $R_\Sigma(\cO_KG)_V$ and $R_s(\cO_KG)_V$ are subgroups of $\Cl(\cO_KG)$. Further- more, given $h\in \Hom(\Omega_K^t,G)^\Sigma_V$ and a finite set $T$ of primes in $\cO_K$, there exists $h'\in \Hom(\Omega_K^t,G)^\Sigma_V$ such that
\begin{enumerate}[(1)]
\item $K_{h'}/K$ is a field extension;
\item $K_{h'}/K$ is unramified at all $v\in T$;
\item $\cl(\cO_{h'})=\cl(\cO_{h})$;
\item $tr(h')=tr(h)$.
\end{enumerate}
\item The natural surjective map
\[
\upphi:tr(\Hom(\Omega_K^t,G)^\Sigma_V)\longrightarrow \frac{R_\Sigma(\cO_KG)_V}{R_s(\cO_KG)_V};
\hspace{1em}
\upphi(tr(h)):=\cl(\cO_h) R_s(\cO_KG)_V,
\]
where $h\in \Hom(\Omega_K^t,G)_V^\Sigma$, is well-defined and is a homomorphism. Moreover, if $i^*$ is injective, then $\upphi$ is an isomorphism.
\end{enumerate}
\end{thm}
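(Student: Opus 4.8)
The plan is to isolate a single approximation statement --- which is precisely the ``furthermore'' clause of part (a) --- and deduce all of parts (a) and (b) from it together with the weak multiplicativity (\ref{galweak}), the homomorphism property of $tr$, and the commutativity of the basic diagram (\ref{bd}). The statement, which I will call the \emph{Approximation Lemma}, is: given $h\in\Hom(\Omega_K^t,G)^\Sigma_V$ and a finite set $T$ of primes of $\cO_K$, there exists $h'\in\Hom(\Omega_K^t,G)^\Sigma_V$ with $K_{h'}/K$ a field extension, $K_{h'}/K$ unramified at every $v\in T$, $\cl(\cO_{h'})=\cl(\cO_h)$, and $tr(h')=tr(h)$. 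Thus part (a) is exactly ``[the subgroup claims] $+$ [the Approximation Lemma]'', and the work will be to prove the Lemma.

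Granting the Lemma, first note $\Hom(\Omega_K^t,G)^\Sigma_V$ is a subgroup of $\Hom(\Omega_K^t,G)$, since products and inverses of $\Sigma$-equivariant homomorphisms unramified at $V$ are again of this kind. I then record the \emph{Key Claim}: for any $h_1,h_2\in\Hom(\Omega_K^t,G)^\Sigma_V$ there is $h\in\Hom(\Omega_K^t,G)^\Sigma_V$ with $\cl(\cO_h)=\cl(\cO_{h_1})\cl(\cO_{h_2})$ and $tr(h)=tr(h_1)tr(h_2)$; indeed, apply the Lemma to $h_1$ with $T=d(h_2)$ to get $h_1'$ with $d(h_1')\cap d(h_2)=\emptyset$, and set $h=h_1'h_2$, so that (\ref{galweak}) gives $\gal(h)=\gal(h_1')\gal(h_2)=\cl(\cO_{h_1})\cl(\cO_{h_2})$ while $tr(h)=tr(h_1')tr(h_2)=tr(h_1)tr(h_2)$. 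Applying the Key Claim to a suitable number of copies of $h_2$ --- the number chosen congruent to $-1$ modulo both the order of $\cl(\cO_{h_2})$ in $\Cl(\cO_KG)$ and the order of $tr(h_2)$ in $H^2(\Sigma,G)$ --- one obtains, for every $h_2$, an $h_3\in\Hom(\Omega_K^t,G)^\Sigma_V$ with $\cl(\cO_{h_3})=\cl(\cO_{h_2})^{-1}$ and $tr(h_3)=tr(h_2)^{-1}$. Since $\Cl(\cO_KG)$ is finite and contains $1=\gal(1)$, the Key Claim shows $R_\Sigma(\cO_KG)_V$ and $R_s(\cO_KG)_V$ are nonempty subsets closed under multiplication, hence subgroups; combined with the Lemma this gives part (a).

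For part (b): if $h_1,h_2\in\Hom(\Omega_K^t,G)^\Sigma_V$ satisfy $tr(h_1)=tr(h_2)$, apply the Key Claim to $h_1$ and the element $h_3$ associated to $h_2$ above; the resulting $h$ has $\cl(\cO_h)=\cl(\cO_{h_1})\cl(\cO_{h_2})^{-1}$ and $tr(h)=tr(h_1)tr(h_2)^{-1}=1$, so $\cl(\cO_{h_1})\cl(\cO_{h_2})^{-1}\in R_s(\cO_KG)_V$ and $\upphi$ is well-defined. Surjectivity is immediate from the definition of $R_\Sigma(\cO_KG)_V$, and the homomorphism property follows from the Key Claim and well-definedness applied to $h_1h_2$, using $tr(h_1h_2)=tr(h_1)tr(h_2)$. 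Finally, if $\upphi(tr(h))=1$ then $\cl(\cO_h)=\cl(\cO_{h_0})$ for some $h_0\in\Hom(\Omega_K^t,G)^\Sigma_V$ with $tr(h_0)=1$, and commutativity of (\ref{bd}) gives $i^*(tr(h))=\xi(\gal(h))=\xi(\gal(h_0))=i^*(tr(h_0))=1$; hence $tr(h)=1$ whenever $i^*$ is injective, and $\upphi$ is then an isomorphism.

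The real work, and the main obstacle, is the Approximation Lemma. The input is McCulloh's description of $R(\cO_KG)$ as the image of the Stickelberger map (\cite[Theorem 6.17 and Corollary 6.20]{M}), which shows $\cl(\cO_h)$ admits many realizations by field extensions ramified only at primes of $\cO_K$ of large residue degree avoiding any prescribed finite set $T$. One must carry this out \emph{$\Sigma$-equivariantly} while simultaneously controlling the transgression class: the ramified primes of $K_{h'}/K$ are taken to form $\Sigma$-orbits lying above primes of $k$ that split completely in $K/k$ --- such primes lie outside $V$ automatically --- and the local ramification (Kummer) data are chosen $\Sigma$-invariantly, which is possible because the hypothesis $\mu_{\exp(G)}\subseteq k$ puts the relevant Kummer theory over $k$. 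Since $(\cl(\cO_h),tr(h))$ is a pair that actually occurs, it satisfies the compatibility $i^*(tr(h))=\xi(\cl(\cO_h))$ forced by (\ref{bd}), and the freedom in the choice of ramifying primes and of the associated global Kummer datum is enough to reproduce this exact pair with ramification avoiding $T$; any residual discrepancy lying in $\ker(i^*)$ is then absorbed by composing with a further $\Sigma$-equivariant, $V$-unramified factor of trivial class whose ramification is disjoint from that of $h'$ and from $T$. Setting up this $\Sigma$-equivariant refinement of McCulloh's construction, with simultaneous control of both $\cl(\cO_{h'})$ and $tr(h')$, is where essentially all of the difficulty lies; the deductions above are then formal.
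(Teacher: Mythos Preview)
Your reduction of parts (a) and (b) to the Approximation Lemma via the ``Key Claim'' is correct, and is essentially the reorganization the paper carries out through Proposition~\ref{Vprop}. Your injectivity argument for $\upphi$ is in fact slightly cleaner than the paper's: you apply $\xi\circ\gal=i^*\circ tr$ directly to $h$ and to $h_0$ separately, whereas the paper first multiplies them (with disjoint ramification) to produce something of trivial class before invoking the basic diagram.

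The genuine gap is the Approximation Lemma itself, which you only sketch. Your proposed mechanism---construct $h'$ with $\cl(\cO_{h'})=\cl(\cO_h)$, observe that $tr(h')tr(h)^{-1}\in\ker(i^*)$ by the basic diagram, then ``absorb'' this discrepancy by composing with a further $h''$ of trivial class---is not justified. To find such an $h''$ you would need to know that every element of $\ker(i^*)$ lying in the image of $tr$ is the transgression of some homomorphism in $\Hom(\Omega_K^t,G)^\Sigma_V$ whose ring of integers has trivial class; this amounts to knowing that $\upphi$ vanishes on $\ker(i^*)$, but $\upphi$ is not even known to be well-defined at this stage, so the argument is circular. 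The paper proceeds differently: McCulloh's approximation theorems (Theorems~\ref{approx1} and~\ref{approx2}) are applied over $k$ rather than over $K$, and the result is pushed up to $K$ via explicit maps $\nu,\mu$ on the relevant idele groups (Subsection~\ref{s:7.2}). The key point is Proposition~\ref{Hs}: the global correction term coming from $\lambda_k(\Lambda(kG)^\times)$ lands, after applying $\Theta^t_K\circ\nu$, inside $\eta(\mathcal{H}_s(KG))$, so modifying the normal-basis resolvend $r_G(b)$ by this term changes $h$ to an $h'$ with $tr(h')=tr(h)$ on the nose, not merely modulo $\ker(i^*)$. The hypothesis that $k$ contains the $\exp(G)$-th roots of unity enters precisely to make $\nu$ and the Stickelberger comparison (Proposition~\ref{commutetheta}) work---which is in the spirit of your remark about Kummer theory over $k$---but the execution requires this descent-to-$k$ structure rather than an after-the-fact correction in $\ker(i^*)$.
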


\begin{remark}If $G$ has odd order, then by Hilbert's formula (see \cite[Chapter IV Proposition 4]{S}, for example), for each $h\in H^1(\Omega_K,G)$ there exists a fractional ideal $A_h$ in $K_h$ whose square is the inverse different ideal of $K_h/K$. If $K_h/K$ is tame, then $A_h$ is locally free over $\cO_KG$ by \cite[Theorem 1]{U} or \cite[Theorem 1 in Section 2]{E}, and hence defines a class $\cl(A_h)$ in $\Cl(\cO_KG)$. If $G$ is abelian in addition, then by adapting the techniques developed \mbox{by McCulloh in \cite{M}, the} author has shown in \cite[Theorems 1.2 (b) and 1.3]{T} that the map

\[
\gal_A:H^1_t(\Omega_K,G)\longrightarrow\Cl(\cO_KG);\hspace{1em}\gal_A(h):=\cl(A_h)
\]
is weakly multiplicative in the sense of (\ref{galweak}) and that
\[
\mathcal{A}^t(\cO_KG):=\{\cl(A_h):h\in H^1_t(\Omega_K,G)\}
\]
is a subgroup of $\Cl(\cO_KG)$. With the additional assumption that $G$ has odd order, the author shows in \cite[Theorems 1.4.4 and 1.4.5]{thesis} that Theorems~\ref{commutes} and~\ref{thm} still hold when one replaces $\gal$ and $\cO_h$ by $\gal_A$ and $A_h$, respectively. The proofs of the corresponding statements are essentially the same.
\end{remark}

In the subsequent subsections, we will give a brief review of locally free class groups and Galois algebras. From Section~\ref{basicdiagram} onwards, we will assume that $G$ is abelian. In Section~\ref{basicdiagram}, we will construct the basic diagram and show that it commutes. In Section~\ref{R}, we will recall the necessary \mbox{definitions in order} to state the characterization of $R(\cO_KG)$ from \cite[Theorem 6.17]{M}. In Section~\ref{SR}, we will modify this characterization and prove Theorem~\ref{thm}.

\subsection{Notation and Conventions}\label{notation} Throughout this paper $G$ is a fixed finite group. We will also use the convention that all of the homomorphisms in the cohomology groups considered are continuous.

The symbol $F$ will denote either a number field or a finite extension of $\mathbb{Q}_p$ for some prime number $p$. Given any such $F$, we will define:
\begin{align*}
\cO_F&:=\mbox{the ring of integers in $F$};\\
F^c&:=\mbox{a fixed algebraic closure of $F$};\\
\cO_{F^c}&:=\mbox{the integral closure of $\cO_F$ in $F^c$};\\
\Omega_F&:=\Gal(F^c/F);\\
F^t&:=\mbox{the maximal tamely ramified extension of $F$ in $F^c$};\\
\Omega^t_F&:=\Gal(F^t/F);\\
M_F&:=\mbox{the set of all finite primes in $F$}.
\end{align*}
We will let $\Omega_F$ and $\Omega_F^t$ act trivially on $G$ on the left. We will also choose a compatible set $\{\zeta_n:n\in\mathbb{Z}^+\}$ of \mbox{primitive roots of} unity in $F^c$, that is, we have $(\zeta_{mn})^m=\zeta_n$ for all $m,n\in\mathbb{Z}^+$. For $G$ abelian, we will also write $\widehat{G}$ for the group of irreducible $F^c$-valued characters on $G$.

In the case that $F$ is a number field, for each $v\in M_F$ we will define:
\begin{align*}
F_v&:=\mbox{the completion of $F$ with respect to $v$};\\
i_v&:=\mbox{a fixed embedding $F^c\longrightarrow F_v^c$ extending the natural}\\
&\hspace{0.65cm}\mbox{embedding $F\longrightarrow F_v$};\\
\widetilde{i_v}&:=\mbox{the embedding $\Omega_{F_v}\longrightarrow\Omega_{F}$ induced by $i_v$}.
\end{align*}
By abuse of notation, we will also write $i_v$ for the isomorphism $F^c\longrightarrow i_v(F^c)$ induced by $i_v$ and $i_v^{-1}$ for the inverse of this isomorphism. Using this notation, the embedding $\widetilde{i_v}:\Omega_{F_v}\longrightarrow\Omega_F$ is then defined by
\begin{equation}\label{iv}
\widetilde{i_v}(\omega):=i_v^{-1}\circ\omega\circ i_v.
\end{equation}
Moreover, if $\{\zeta_n:n\in\mathbb{Z}^+\}$ is the chosen compatible set of primitive roots of unity in $F^c$, then for each $v\in M_F$, we will choose $\{i_v(\zeta_n):n\in\mathbb{Z}^+\}$ to be the compatible set of primitive roots of unity in $F^c_v$.

\subsection{Locally Free Class Groups}\label{classgroup} Let $F$ be number field. We \mbox{will recall the} definition and an idelic description of the locally free class group of $\cO_FG$ (see \cite[Chapter 6]{CR2} for more details).

\begin{definition}An \emph{$\cO_FG$-lattice} is a left $\cO_FG$-module which is finitely generated and projective as an $\cO_F$-module. Two $\cO_FG$-lattices $X$ and $X'$ \mbox{are said} to be \emph{stably isomorphic} if there exists $k\in\mathbb{Z}^+$ such that
\[
X\oplus(\cO_FG)^k\simeq X'\oplus(\cO_FG)^k.
\]
The stable isomorphism class of $X$ will be denoted by $[X]$.
\end{definition}

\begin{remark}\label{stable}If two $\cO_FG$-lattices are isomorphic, then they are certainly stably isomorphic. The converse holds as well when $G$ is abelian (see \cite[Proposition 51.2 and Theorem 51.24]{CR2}, for example).
\end{remark}

\begin{definition}
An $\cO_FG$-lattice $X$ is \emph{locally free over $\cO_FG$ (of rank one)} if $\cO_{F_v}\otimes_{\cO_F}X$ and $\cO_{F_v}G$ are isomorphic as $\cO_{F_v}G$-modules for all $v\in M_F$.
\end{definition}

\begin{definition}\label{LFCG}The \emph{locally free class group of $\cO_FG$} is defined to be the set
\[
\Cl(\cO_FG):=\{[X]:X\mbox{ is a locally free $\cO_FG$-lattice}\}
\]
equipped with the following group operation. By \cite[Corollary 31.7]{CR1}, for any pair of locally free $\cO_FG$-lattices $X$ and $X'$, there exists a locally free $\cO_FG$-lattice $X''$ such that $X\oplus X'\simeq\cO_FG\oplus X''$. It is straightforward to check that $[X'']$ is uniquely determined by $[X]$ and $[X']$. We then define $[X][X']:=[X'']$.
\end{definition}

We note that the group operation of $\Cl(\cO_FG)$ is usually written additively. We will write it multiplicatively instead because we will use an idelic description of $\Cl(\cO_FG)$, which we recall below.

\begin{definition}\label{J(FG)}Let $J(FG)$ denote the restricted direct product of the groups $(F_vG)^\times$ with respect to the subgroups $(\cO_{F_v}G)^\times$ for $v\in M_F$. Moreover, let
\[
\partial:(FG)^\times\longrightarrow J(FG)
\]
denote the diagonal map and let
\[
U(\cO_FG):=\prod_{v\in M_F}(\cO_{F_v}G)^\times
\]
be the group of unit ideles. 
\end{definition}

For each idele $c=(c_v)\in J(FG)$, define
\begin{equation}\label{OGc}
\cO_FG\cdot c:=\bigcap_{v\in M_F}(\cO_{F_v}G\cdot c_v\cap FG).
\end{equation}
Since every locally free $\cO_FG$-lattice may be embedded into $FG$, the map
\begin{equation}\label{j}
j:J(FG)\longrightarrow\mbox{Cl}(\cO_FG);\hspace{1em}j(c):=[\cO_FG\cdot c]
\end{equation}
is surjective. It is also a homomorphism by \cite[Theorem 31.19]{CR1}. 

\begin{thm}\label{isoLFCG}
If $G$ is abelian, then the map $j$ induces an isomorphism
\[
\mbox{Cl}(\cO_FG)\simeq\frac{J(FG)}{\partial((FG)^\times) U(\cO_FG)}.
\]
\end{thm}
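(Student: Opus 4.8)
The plan is to obtain the isomorphism by computing $\ker(j)$: since (\ref{j}) already records that $j\colon J(FG)\to\Cl(\cO_FG)$ is a surjective homomorphism, it suffices to prove that $\ker(j)=\partial((FG)^\times)\,U(\cO_FG)$, and then the isomorphism theorem does the rest. Accordingly the proof splits into the two inclusions.

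For $\partial((FG)^\times)\,U(\cO_FG)\subseteq\ker(j)$, I would first note that $\partial((FG)^\times)\,U(\cO_FG)$ is a subgroup of the abelian group $J(FG)$, so it is enough to check that $\partial((FG)^\times)$ and $U(\cO_FG)$ each lie in the subgroup $\ker(j)$. If $u\in U(\cO_FG)$, then $u_v\in(\cO_{F_v}G)^\times$ for every $v$, hence $\cO_{F_v}G\cdot u_v=\cO_{F_v}G$, and taking the intersection in (\ref{OGc}) gives $\cO_FG\cdot u=\cO_FG$, i.e.\ $j(u)=1$. If $a\in(FG)^\times$, a direct computation using $\cO_FG=FG\cap\bigcap_{v\in M_F}\cO_{F_v}G$ shows that $\cO_FG\cdot\partial(a)=a\cdot\cO_FG$, which is isomorphic to $\cO_FG$ as an $\cO_FG$-module via multiplication by $a$; hence $j(\partial(a))=[\cO_FG]=1$.

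For the reverse inclusion, take $c\in\ker(j)$, so that $\cO_FG\cdot c$ is stably isomorphic to $\cO_FG$. Since $G$ is abelian, Remark~\ref{stable} promotes this to an $\cO_FG$-module isomorphism $\theta\colon\cO_FG\to\cO_FG\cdot c$. Because $\cO_FG\cdot c$ is a full $\cO_F$-lattice in $FG$, extending scalars to $F$ turns $\theta$ into an $FG$-module automorphism of $FG$, necessarily multiplication by $a:=\theta(1)\in(FG)^\times$; thus $\cO_FG\cdot c=a\cdot\cO_FG$. Now I would invoke the standard fact (see \cite[Section 31]{CR1}, already quoted in the text) that $\cO_FG\cdot c$ is a locally free $\cO_FG$-lattice whose completion at each $v\in M_F$ is $\cO_{F_v}G\cdot c_v$; comparing completions of $\cO_FG\cdot c=a\cdot\cO_FG$ gives $\cO_{F_v}G\cdot c_v=a\cdot\cO_{F_v}G$, hence $\cO_{F_v}G\cdot(a^{-1}c_v)=\cO_{F_v}G$ for every $v$. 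Since $G$ is abelian, $\cO_{F_v}G$ is commutative, so an element generating it as a module over itself is a unit; therefore $a^{-1}c_v\in(\cO_{F_v}G)^\times$ for all $v$, i.e.\ $\partial(a)^{-1}c\in U(\cO_FG)$ and $c\in\partial((FG)^\times)\,U(\cO_FG)$.

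The substantive inputs — that $j$ is a surjective homomorphism, and that $\cO_FG\cdot c$ is a full locally free $\cO_FG$-lattice in $FG$ with completion $\cO_{F_v}G\cdot c_v$ at each $v$ — are standard and are already borrowed from \cite{CR1}, so the argument is essentially bookkeeping. The hypothesis that $G$ is abelian is used only in applying Remark~\ref{stable} (to pass from a stable isomorphism to an honest one) and in the final sentence (for non-commutative $\cO_{F_v}G$ one would get only a one-sided inverse); the one point requiring care is the manipulation of the local–global intersection (\ref{OGc}), which is what I expect to be the main, if modest, obstacle.
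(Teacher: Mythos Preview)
Your argument is correct. The paper does not actually prove this statement: its entire proof is the sentence ``See \cite[Theorem 49.22 and Exercise 51.1]{CR2}, for example.'' What you have written is essentially the standard computation of $\ker(j)$ that underlies those references, so your approach is not so much \emph{different} from the paper's as it is an explicit unpacking of the citation. The two places where you use that $G$ is abelian---upgrading stable isomorphism to isomorphism via Remark~\ref{stable}, and concluding that a generator of the commutative ring $\cO_{F_v}G$ over itself is a unit---are exactly the points where the general (non-abelian) version of the statement fails and one must pass to reduced norms, so your identification of the role of the hypothesis is accurate. The only step worth tightening is the local--global claim that the completion of $\cO_FG\cdot c$ at $v$ equals $\cO_{F_v}G\cdot c_v$; this is indeed the standard lattice--idele correspondence from \cite[Section 31]{CR1}, but you should state it as such rather than fold it into the assertion that $\cO_FG\cdot c$ is locally free.
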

\begin{proof}See \cite[Theorem 49.22 and Exercise 51.1]{CR2}, for example.
\end{proof}

\subsection{Galois Algebras and Resolvends}\label{galois}

Let $F$ be a number field or a finite extension of $\mathbb{Q}_p$. We will give a brief review of Galois algebras and resolvends (see \cite[Section 1]{M} for more details).

\begin{definition}\label{GaloisAlg}A \emph{$G$-Galois $F$-algebra} is a commutative semi-simple algebra $N$ over $F$ on which $G$ acts on the left as a group of automorphisms such that $N^{G}=F$ and $[N:F]=|G|$, where $[N:F]$ is the dimension of $N$ over $F$. Two $G$-Galois $F$-algebras are said to be \emph{isomorphic} if there is an $F$-algebra isomorphism between them which preserves the action of $G$.
\end{definition}

Consider the $F^c$-algebra $\Map(G,F^c)$ on which $G$ acts on the left by
\[
(s\cdot a)(t):=a(ts)\hspace{1cm}\mbox{for $a\in\Map(G,F^c)$ and $s,t\in G$}.
\]
Recall that $\Omega_F$ acts trivially on $G$ by definition. For each $h\in\Hom(\Omega_F,G)$, let $^hG$ denote the group $G$ endowed with the twisted $\Omega_F$-action given by
\[
\omega\cdot s:=h(\omega)s\hspace{1cm}\mbox{for $s\in G$ and $\omega\in\Omega_F$}.
\]
Now, consider the $F$-subalgebra and $G$-submodule
\[
F_{h}:=\Map_{\Omega_F}(^{h}G,F^{c})
\]
of $\Map(G,F^c)$ consisting of the maps $^hG\longrightarrow F^c$ that preserve the $\Omega_F$-action. If $\{s_i\}$ is a set of coset representatives of $h(\Omega_F)\backslash G$ and
\begin{equation}\label{Fh}
F^{h}:=(F^{c})^{\ker(h)},
\end{equation}
then evaluation at the elements $s_i$ induces an isomorphism
\[
F_{h}\simeq \prod_{h(\Omega_F)\backslash G}F^{h}
\]
of $F$-algebras. This implies that $[F_h:F]=[G:h(\Omega_F)][F^h:F]=|G|$. Notice that $(F_h)^G=F$ also, where $F$ is  identified with the set of constant $F$-valued functions in $F_h$. It follows that $F_h$ is a $G$-Galois $F$-algebra.

It is not difficult to check that every $G$-Galois $F$-algebra is \mbox{isomorphic to $F_h$} for some $h\in\Hom(\Omega_F,G)$, and that for $h,h'\in\Hom(\Omega_F,G)$ we have $F_{h}\simeq F_{h'}$ if and only if $h$ and $h'$ differ by an element in $\mbox{Inn}(G)$. Hence, the map $h\mapsto F_h$ induces a bijective correspondence between the pointed set
\[
H^1(\Omega_F,G):=\Hom(\Omega_F,G)/\mbox{Inn}(G)
\]
and the set of all isomorphism classes of $G$-Galois $F$-algebras.

In the case that $G$ is abelian, note that $H^1(\Omega_F,G)$ is equal to $\Hom(\Omega_F,G)$ and so in particular has a group structure.

\begin{definition}
Given $h\in\Hom(\Omega_F,G)$, define $F^h:=(F^c)^{\ker(h)}$ as in (\ref{Fh}). Let $\cO^h:=\cO_{F^h}$ and define the \emph{ring of integers of $F_h$} by
\[
\cO_h:=\mbox{Map}_{\Omega_F}(^hG,\cO^h).
\]
\end{definition}

\begin{remark}\label{localize}
Assume that $F$ a number field. Given $h\in\Hom(\Omega_F,G)$, define
\begin{equation}\label{hv}
h_v\in\Hom(\Omega_{F_v},G);\hspace{1em}h_v:=h\circ\widetilde{i_v}
\end{equation}
for each $v\in M_F$. It was proved in \cite[(1.4)]{M} that $(F_v)_{h_v}\simeq F_v\otimes_FF_h$. Similarly, we have $\cO_{h_v}\simeq\cO_{F_v}\otimes_{\cO_F}\cO_h$.
\end{remark}

\begin{definition}\label{ramification}
Given $h\in\Hom(\Omega_F,G)$, we say that $F_h/F$ or $h$ is \emph{unramified} (respectively, \emph{tame}) if $F^h/F$ is unramified (respectively, \emph{tame}).
\end{definition}

\begin{definition}\label{resolvend}
The \emph{resolvend map} $\mathbf{r}_{G}:\mbox{Map}(G,F^{c})\longrightarrow F^{c}G$ is defined by
\[
\mathbf{r}_{G}(a):=\sum\limits _{s\in G}a(s)s^{-1}.
\]
It is clear that $\mathbf{r}_{G}$ is an isomorphism of $F^cG$-modules, but not an isomorphism of $F^cG$-algebras because it does not preserve multiplication. 
\end{definition}

Given $a\in\mbox{Map}(G,F^c)$, it is easy to check that $a\in F_h$ if and only if
\begin{equation}\label{resol1}
\omega\cdot\mathbf{r}_{G}(a)=\mathbf{r}_{G}(a)h(\omega)
\hspace{1cm}\mbox{for all }\omega\in\Omega_F.
\end{equation}
The following proposition shows that resolvends may also be used to identify elements $a\in F_h$ for which $F_h=FG\cdot a$ or $\cO_h=\cO_FG\cdot a$.

\begin{prop}\label{NBG}Assume that $G$ is abelian and let $a\in F_h$.
\begin{enumerate}[(a)]
\item We have $F_h=FG\cdot a$ if and only if $\mathbf{r}_{G}(a)\in (F^{c}G)^{\times}$.
\item We have $\cO_h=\cO_FG\cdot a$ with $h$ unramified if and only if $\mathbf{r}_G(a)\in(\cO_{F^c}G)^\times$. Moreover, if $F$ is a finite extension of $\mathbb{Q}_p$ and $h$ is unramified, then there exists $a\in\cO_h$ such that $\cO_h=\cO_FG\cdot a$.
\end{enumerate}
\end{prop}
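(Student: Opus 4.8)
The plan is to prove both parts of Proposition~\ref{NBG} by combining the characterization in (\ref{resol1}) with the standard dictionary between normal basis generators and invertible resolvends. For part (a): recall that $\mathbf{r}_G$ is an isomorphism of $F^cG$-modules, so for $a \in F_h \subseteq \Map(G, F^c)$ we have $F^cG \cdot a = F^cG \cdot \mathbf{r}_G(a)$ inside $F^cG$ (identifying $\Map(G,F^c)$ with $F^cG$ via $\mathbf{r}_G$), where the latter equals all of $F^cG$ precisely when $\mathbf{r}_G(a) \in (F^cG)^\times$. The task is then to descend this equivalence from $F^c$ to $F$. First I would show that $F_h = FG \cdot a$ implies $F^c \otimes_F F_h = F^cG \cdot a$ — this uses $[F_h : F] = |G|$ so that a generating set of size one over $FG$ gives, after base change, a generating set over $F^cG$, forcing $\mathbf{r}_G(a)$ to be a unit. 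Conversely, if $\mathbf{r}_G(a) \in (F^cG)^\times$, then $a$ generates $F_h$ over $FG$: the point is that $FG \cdot a$ is an $F$-subspace of $F_h$ whose $F^c$-span is all of $F^cG \cap (\text{image of } F_h)$, and a dimension count over $F$ (both sides having dimension $|G|$) closes the argument. I expect the cleanest route is to observe that $a$ is a free generator of $F_h$ over $FG$ iff the map $FG \to F_h$, $x \mapsto x \cdot a$, is an isomorphism, iff it becomes an isomorphism after $\otimes_F F^c$, iff $\mathbf{r}_G(a)$ is a unit in $F^cG$.

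For part (b), the first statement is the integral analogue: I would argue that $\cO_h = \cO_FG \cdot a$ iff the map $\cO_FG \to \cO_h$, $x \mapsto xa$, is an isomorphism of $\cO_FG$-modules. Since $\cO_h = \Map_{\Omega_F}(^hG, \cO^h)$ and $\cO_{F^c}G$-module structure is detected by $\mathbf{r}_G$, one checks that $\mathbf{r}_G(a) \in (\cO_{F^c}G)^\times$ is equivalent to $a$ generating $\cO_h^{(c)} := \Map(^hG, \cO_{F^c})$ freely over $\cO_{F^c}G$; then one must verify that this descends to the $\Omega_F$-fixed points, which is where the unramifiedness of $h$ enters — the relation (\ref{resol1}) together with $\mathbf{r}_G(a)$ being a unit shows that $a$ lies in $\cO_h$ and that the different/discriminant obstruction vanishes exactly when $F^h/F$ is unramified, so that $\cO^h$ has a normal basis over $\cO_F$ in the relevant sense (Noether's theorem, or rather its converse direction for the "only if"). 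For the "only if" direction, $\cO_h = \cO_FG \cdot a$ forces $\cO_h$ to be $\cO_FG$-free, hence $\cO_h$ is locally free, and by Noether's theorem the tameness is automatic; the stronger conclusion that $h$ is genuinely unramified (not merely tame) should follow because a free — not just locally free — generator over $\cO_FG$ forces the resolvend to be a global unit in $\cO_{F^c}G$, and examining ramification locally (via Remark~\ref{localize}) shows each $h_v$ must be unramified.

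For the final assertion in part (b), where $F$ is a finite extension of $\mathbb{Q}_p$ and $h$ is unramified, I would invoke that $F^h/F$ is then an unramified extension of local fields, so $\cO^h = \cO_{F^h}$ is a free $\cO_F[\Gal(F^h/F)]$-module of rank one by the normal basis theorem for unramified extensions (equivalently, by Noether's theorem in the unramified local case, which is elementary). Choosing such a generator $\theta$ of $\cO^h$ over $\cO_F[\mathrm{im}(h)]$ and pulling back along the evaluation isomorphism $\cO_h \simeq \prod_{h(\Omega_F)\backslash G} \cO^h$, one assembles an element $a \in \cO_h$ whose resolvend is a unit in $\cO_{F^c}G$, giving $\cO_h = \cO_FG \cdot a$ by the first part.

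The main obstacle, I expect, will be the descent step in part (b) — keeping careful track of how the $\Omega_F$-action interacts with the resolvend and ensuring that an invertible resolvend in $(\cO_{F^c}G)^\times$ really does cut out a free $\cO_FG$-generator of the fixed-point ring $\cO_h$ rather than merely of $\Map(^hG, \cO_{F^c})$, and precisely isolating why \emph{unramified} (as opposed to merely \emph{tame}) is the right hypothesis. I would resolve this by reducing to the local case via Remark~\ref{localize}, where $\cO_{F^c}G$ is replaced by $\cO_{F_v^c}G$ and the ramification of $h_v$ is visible in whether $\cO^{h_v}$ admits an $\cO_{F_v}$-normal basis, and the classical fact (as in \cite[Section 1]{M}) that $\mathbf{r}_G$ restricts to the asserted correspondence locally.
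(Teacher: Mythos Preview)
The paper does not actually prove this proposition: it cites \cite[Proposition 1.8]{M} for (a), \cite[(2.11)]{M} for the first claim in (b), and Noether's theorem (or \cite[Proposition 5.5]{M}) for the existence statement. Your proposal thus goes well beyond the paper, and your argument for (a) --- base-change the map $FG\to F_h$, $x\mapsto x\cdot a$, to $F^c$ and observe that under $\mathbf{r}_G$ it becomes multiplication by $\mathbf{r}_G(a)$ in $F^cG$ --- is correct and is essentially McCulloh's argument. The existence claim in (b) via a normal integral basis for unramified local extensions is also fine.

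There is, however, a genuine gap in your ``only if'' direction for (b). You have misread the logical structure: in the statement, ``$h$ unramified'' is part of the \emph{hypothesis} on the left-hand side, not a conclusion to be extracted from $\cO_h=\cO_FG\cdot a$. Your attempt to deduce unramifiedness from freeness is in fact false: for $F$ a $p$-adic field and $h$ tame but ramified, Noether's theorem gives $\cO_h=\cO_FG\cdot a$ for some $a$, yet $h$ is not unramified and $\mathbf{r}_G(a)\notin(\cO_{F^c}G)^\times$. So your claim that ``a free --- not just locally free --- generator over $\cO_FG$ forces the resolvend to be a global unit in $\cO_{F^c}G$'' is exactly what fails. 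The correct argument assumes \emph{both} $\cO_h=\cO_FG\cdot a$ and $h$ unramified; unramifiedness is what guarantees that $\cO_{F^c}\otimes_{\cO_F}\cO_h$ is all of $\Map(G,\cO_{F^c})$ (equivalently, that the discriminant of $\{s\cdot a:s\in G\}$ is a unit), after which the integral base-change runs parallel to part (a) and yields $\mathbf{r}_G(a)\in(\cO_{F^c}G)^\times$.
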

\begin{proof}See \cite[Proposition 1.8]{M} for (a) and \cite[(2.11)]{M} for the first claim in (b). For the second claim in (b), it follows from a classical theorem of Noether, or alternatively from \cite[Proposition 5.5]{M}. We remark that only the \mbox{first claim in} (b) requires the assumption that $G$ is abelian.
\end{proof}

\begin{remark}\label{tamesubgp}
A homomorphism $h\in\Hom(\Omega_F,G)$ is tame if and only if it factors through the quotient map $\Omega_F\longrightarrow\Omega_F^t$. Hence, the subset of $\Hom(\Omega_F,G)$ consisting of the tame homomorphisms may be identified with $\Hom(\Omega_F^t,G)$. In particular, a tame homomorphism $\widetilde{h}\in\Hom(\Omega_F,G)$ may be identified with the element $h\in\Hom(\Omega_F^t,G)$ defined by
\[
h(\omega):=\widetilde{h}(\widetilde{\omega})\hspace{1cm}\mbox{for }\omega\in\Omega_F^t,
\]
where $\widetilde{\omega}\in\Omega_F$ is any lift of $\omega$. Conversely, any element $h\in\Hom(\Omega_F^t,G)$ may be identified with the homomorphism $\widetilde{h}\in\Hom(\Omega_F,G)$ defined by
\[
\widetilde{h}(\omega):=h(\omega|_{F^t})\hspace{1cm}\mbox{for }\omega\in\Omega_F.
\]
The above identifications will be used repeatedly throughout the rest of this paper. In particular, given $h\in\Hom(\Omega_F^t,G)$, all of the definitions and results introduced in this subsection still apply.
\end{remark}

\section{The Basic Diagram}\label{basicdiagram}

Throughout this section, let $K/k$ denote a fixed Galois extension of number fields and set $\Sigma:=\Gal(K/k)$. We will also assume that $G$ is abelian and we fix a left $\Sigma$-module structure on $G$. The purpose of this section is to explain the construction of the basic diagram (\ref{bd}), which was first defined by Brinkhuis in \cite[Theorem 5.1]{B} in order to connect the study of realizable classes to that of embedding problems (cf. Remark~\ref{bdmodified} above and the discussion following it). The map $\gal$ in the basic diagram is that defined in (\ref{gal}), and the map
\[
i^*:H^2(\Sigma,G)\longrightarrow H^2(\Sigma,(\cO_KG)^\times)
\]
is that induced by the natural inclusion $G\longrightarrow(\cO_KG)^\times$. The horizontal rows will be constructed in the subsequent subsections. We will also show that the top row is exact, that all of the maps except possibly $\gal$ are homomorphisms, and that (\ref{bd}) commutes.

In this section, we will let $\Gal(K^t/k)$ act on $G$ on the left via the natural quotient map $\Gal(K^t/k)\longrightarrow\Sigma$ and the given left $\Sigma$-action on $G$. Via the natural left $\Gal(K^t/k)$-action on $K^t$, this extends to a left $\Gal(K^t/k)$-action on $K^tG$. As noted in Remark~\ref{tamesubgp}, we will \mbox{identify $\Hom(\Omega_K^t,G)$ with the} subset of $\Hom(\Omega_K,G)$ consisting of the tame homomorphisms. We will also use the following notation.

\begin{definition}\label{liftgamma}For each $\gamma\in\Sigma$, choose once and for all a lift $\lift{\gamma}\in\Gal(K^t/k)$ of $\gamma$ with $\lift{1}=1$
\end{definition}

\subsection{The Top Row: Hochschild-Serre Sequence}\label{top row}Recall that $\Omega_K^t$ acts trivially on $G$ on the left by definition. From the Hochschild-Serre spectral sequence (see \cite[Chapter I Section 2.6]{SG}, for example) associated to the group extension
\[
\begin{tikzcd}[column sep=1cm, row sep=1.5cm]
1 \arrow{r} &
\Omega_K^t \arrow{r} &
\Gal(K^t/k) \arrow{r} &
\Sigma\arrow{r} &
1,
\end{tikzcd}
\]
we then obtain an exact sequence
\begin{equation}\label{toprow}
\begin{tikzcd}[column sep=1.5cm]
H^1(\Gal(K^t/k),G)\arrow{r}{\mbox{res}} & \Hom(\Omega_K^t,G)^\Sigma\arrow{r}[font=\normalsize]{tr} & H^2(\Sigma,G).
\end{tikzcd}
\end{equation}
Here $\mbox{res}$ is given by restriction and $tr$ is the \emph{transgression map}. We remark that (\ref{toprow}) is also part of the five-term inflation-restriction exact sequence in group cohomology (see \cite[Proposition 1.6.6]{NG}, for example). We will recall the definitions of the $\Sigma$-action on $\Hom(\Omega_K^t,G)$ and the map $tr$ in this particular setting.

\begin{definition}\label{tr}Given  $h\in\Hom(\Omega_K^t,G)$ and $\gamma\in\Sigma$, define
\[
(h\cdot\gamma)(\omega):= \gamma^{-1}\cdot h(\lift{\gamma}\omega\lift{\gamma}^{-1})\hspace{1cm}\mbox{for all $\omega\in\Omega_K^t$}.
\]
This definition is independent of the choice of the lift $\lift{\gamma}$ because $G$ is abelian. The \emph{transgression map} $tr:\Hom(\Omega_K^t,G)^\Sigma\longrightarrow H^2(\Sigma,G)$ (see \cite[Proposition 1.6.5]{NG}, for example) is defined by

\[
tr(h):=[(\gamma,\delta)\mapsto h((\lift{\gamma})(\lift{\delta})(\lift{\gamma\delta})^{-1})],
\]
where $[-]$ denotes the cohomology class. This definition is also independent of the choice of the lifts $\lift{\gamma}$ for $\gamma\in\Sigma$.
\end{definition}

Below, we will explain how the exact sequence (\ref{toprow}) is related to the study of embedding problems. To that end, observe that each group extension
\[
\begin{tikzcd}[column sep=1cm]
E:\hspace{0.5cm}1\arrow{r}&G\arrow{r}[font=\normalsize]{\upiota}&\Gamma\arrow{r}&\Sigma\arrow{r}&1
\end{tikzcd}
\]
of $\Sigma$ by $G$ induces a canonical left $\Sigma$-module structure on $G$ via conjugation in $\Gamma$ as follows. For each $\gamma\in\Sigma$, choose a lift $\upsigma(\gamma)$ of $\gamma$ in $\Gamma$. Then, define
\begin{equation}\label{conjaction}
\gamma*s:=\upiota^{-1}(\upsigma(\gamma)\upiota(s)\upsigma(\gamma)^{-1})\hspace{1cm}
\mbox{for $\gamma\in\Gamma$ and $s\in G$}.
\end{equation}
This definition does not depend upon the choice of the lift $\upsigma(\gamma)$ because $G$ is abelian. In addition, define a map $c_{E}:\Sigma\times\Sigma\longrightarrow G$ by
\begin{equation}\label{cE}
c_{E}(\gamma,\delta):=\upiota^{-1}(\upsigma(\gamma)\upsigma(\delta)\upsigma(\gamma\delta)^{-1}).
\end{equation}
Let $E(K/k,G)$ denote the set of all equivalence classes of the group extensions of $\Sigma$ by $G$ for which (\ref{conjaction}) coincides with the $\Sigma$-action on $G$ that we have fixed. It is well-known (see \cite[Theorem 1.2.4]{NG}, for example) that $E\mapsto c_{E}$ induces a bijective correspondence between $E(K/k,G)$ and the  group $H^2(\Sigma,G)$, and the map $c_{E}$ represents the trivial cohomology class if and only if $E$ splits.

\begin{prop}\label{Brelation}Let $h\in\Hom(\Omega_K^t,G)^\Sigma$ be surjective. Then, \mbox{the field $K^h$ is} a tame solution to the embedding problem $(K/k,G,E_h)$, where $E_h$ is a group extension of $\Sigma$ by $G$ whose equivalence class corresponds to $tr(h)$.
\end{prop}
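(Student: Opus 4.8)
The plan is to exhibit $N:=K^h=(K^t)^{\ker(h)}$ as the tame solution and to build $E_h$ directly out of Galois groups. I would first record the two easy points: since $h$ is a homomorphism factoring through $\Omega_K^t$, the field $K^h$ lies in $K^t$, so $K^h/K$ is tamely ramified; and since $h$ is surjective it induces an isomorphism $\overline{h}\colon\Gal(K^h/K)=\Omega_K^t/\ker(h)\xrightarrow{\ \sim\ }G$.

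Next I would prove that $K^h/k$ is Galois, which amounts to showing $\ker(h)\trianglelefteq\Gal(K^t/k)$. Since $\ker(h)$ is the kernel of a homomorphism it is normal in $\Omega_K^t$, and $\Gal(K^t/k)=\Omega_K^t\cdot\{\overline{\gamma}:\gamma\in\Sigma\}$ with the lifts $\overline{\gamma}$ of Definition~\ref{liftgamma}, so it suffices to show $\overline{\gamma}\,\ker(h)\,\overline{\gamma}^{-1}=\ker(h)$ for each $\gamma$. Here the hypothesis that $h$ is $\Sigma$-invariant enters: rewriting $(h\cdot\gamma)(\omega)=h(\omega)$ as $h(\overline{\gamma}\,\omega\,\overline{\gamma}^{-1})=\gamma\cdot h(\omega)$ shows that conjugation by $\overline{\gamma}$ carries $\ker(h)$ into $\ker(h)$, and since this conjugation is an automorphism of $\Omega_K^t$ preserving the finite index $|G|$, the inclusion is an equality. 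Thus $\Gamma:=\Gal(K^h/k)$ sits in a short exact sequence $E_h\colon 1\to G\xrightarrow{\upiota}\Gamma\to\Sigma\to 1$, where $\Gamma\to\Sigma$ is restriction to $K$ and $\upiota$ is $\overline{h}^{\,-1}$ followed by the inclusion $\Gal(K^h/K)\hookrightarrow\Gamma$. With these identifications the commutative diagram required in Definition~\ref{embedproblem} is literally the morphism of exact sequences given by $\overline{h}$ and $\upiota$, so $K^h$ is a tame solution to $(K/k,G,E_h)$.

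It then remains to identify the class of $E_h$ with $tr(h)$. I would make the bookkeeping uniform by taking, for each $\gamma\in\Sigma$, the section $\upsigma(\gamma)\in\Gamma$ to be the image of $\overline{\gamma}$ under $\Gal(K^t/k)\twoheadrightarrow\Gamma$, and by noting that $\upiota(s)$ is the image of any $\omega\in\Omega_K^t$ with $h(\omega)=s$. With these choices, formula (\ref{conjaction}) gives $\gamma*s=\upiota^{-1}(\upsigma(\gamma)\upiota(s)\upsigma(\gamma)^{-1})=h(\overline{\gamma}\,\omega\,\overline{\gamma}^{-1})=\gamma\cdot h(\omega)=\gamma\cdot s$, using $\Sigma$-invariance of $h$ again; hence the conjugation action attached to $E_h$ is the fixed $\Sigma$-action and $E_h\in E(K/k,G)$. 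Similarly, since $\overline{\gamma}\,\overline{\delta}\,(\overline{\gamma\delta})^{-1}\in\Omega_K^t$, formula (\ref{cE}) gives $c_{E_h}(\gamma,\delta)=h\big((\overline{\gamma})(\overline{\delta})(\overline{\gamma\delta})^{-1}\big)$, which is exactly the cocycle representing $tr(h)$ in Definition~\ref{tr}. Under the correspondence $E\mapsto c_E$ between $E(K/k,G)$ and $H^2(\Sigma,G)$, this says the class of $E_h$ corresponds to $tr(h)$, completing the proof.

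I do not expect a serious obstacle; the only subtlety is organizational, namely keeping the three families of section-lifts consistent (the $\overline{\gamma}\in\Gal(K^t/k)$, their images $\upsigma(\gamma)\in\Gamma$, and the $\omega\in\Omega_K^t$ lifting elements of $G$) so that (\ref{conjaction}) and (\ref{cE}) evaluate on the nose. The two places where the hypotheses are genuinely used are the $\Sigma$-invariance of $h$ — needed both for $\ker(h)\trianglelefteq\Gal(K^t/k)$ and for the induced conjugation action to match the fixed one — and the surjectivity of $h$, which upgrades $\overline{h}$ from a surjection to an isomorphism and hence lets $G$ itself, rather than a proper subgroup, appear as $\Gal(K^h/K)$.
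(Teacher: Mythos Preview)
Your proposal is correct and follows essentially the same approach as the paper: show $\ker(h)\trianglelefteq\Gal(K^t/k)$ via $\Sigma$-invariance to get $K^h/k$ Galois, build $E_h$ from the resulting exact sequence with $\upiota=\overline{h}^{-1}$, and then compute both (\ref{conjaction}) and (\ref{cE}) using the sections $\upsigma(\gamma)=\overline{\gamma}|_{K^h}$. The only cosmetic difference is that the paper checks normality by writing a general element of $\Gal(K^t/k)$ as $\overline{\gamma}\omega_0$ and computing $h(\overline{\gamma}\omega_0\,\omega\,\omega_0^{-1}\overline{\gamma}^{-1})$ directly, whereas you split this into ``$\ker(h)$ is already normal in $\Omega_K^t$'' plus ``conjugation by each $\overline{\gamma}$ preserves $\ker(h)$''; both arguments are equivalent.
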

\begin{proof}Observe that $K^h/k$ is Galois because $\Gal(K^t/K^h)$, which equals $\ker(h)$, is a normal subgroup of $\Gal(K^t/k)$. To see why, let $\omega_k\in\Gal(K^t/k)$ be given and write $\omega_k=\lift{\gamma}\omega_0$, where $\gamma\in\Sigma$ and $\omega_0\in\Omega_K^t$. For any $\omega\in\ker(h)$, we have
\begin{align*}
h(\omega_k\omega\omega_k^{-1})
&=h(\lift{\gamma}\omega_0\omega\omega_0^{-1}\lift{\gamma}^{-1})\\
&=\gamma\cdot (h\cdot\gamma)(\omega_0\omega\omega_0^{-1})\\
&=\gamma\cdot (h(\omega_0)h(\omega)h(\omega_0)^{-1}),
\end{align*}
where the last equality holds because $h$ is $\Sigma$-invariant and is a homomorphism on $\Omega_K^t$. Since $\omega\in\ker(h)$, we see that $h(\omega_k\omega\omega_k^{-1})=1$ and so $\omega_k\omega\omega_k^{-1}\in\ker(h)$. \par\noindent  Hence, indeed $\ker(h)$ is normal in $\Gal(K^t/k)$.

Next, notice that $h$ induces an isomorphism $\lift{h}:\Gal(K^h/K)\longrightarrow G$ because $h$ is surjective. Set $\Gamma_h:=\Gal(K^h/k)$ and let $\upiota:G\longrightarrow\Gal(K^h/K)\longrightarrow\Gamma_h$ denote the homomorphism $\lift{h}^{-1}$ followed by the natural inclusion $\Gal(K^h/K)\longrightarrow\Gal(K^h/k)$. We then obtain a group extension
\[
\begin{tikzcd}[column sep=1cm]
E_h:\hspace{0.5cm}1\arrow{r}&G\arrow{r}[font=\normalsize]{\upiota}&\Gamma_h\arrow{r}&\Sigma\arrow{r}&1
\end{tikzcd}
\]
of $\Sigma$ by $G$. The diagram
\[
\begin{tikzpicture}[baseline=(current bounding box.center)]
\node at (-1,3) [name=A] {$1$};
\node at (13,3) [name=B] {$1$};
\node at (-1,1) [name=C] {$1$};
\node at (13,1) [name=D] {$1$};
\node at (2,3) [name=1] {$\Gal(K^h/K)$};
\node at (6,3) [name=2] {$\Gal(K^h/k)$};
\node at (10,3) [name=3] {$\Gal(K/k)$};
\node at (2,1) [name=4] {$G$};
\node at (6,1) [name=5] {$\Gamma_h$};
\node at (10,1) [name=6] {$\Sigma$};
\path[->]
(A) edge node[auto] {} (1)
(C) edge node[auto] {} (4)
(3) edge node[auto] {} (B)
(6) edge node[auto] {} (D)
(1) edge node[auto] {} (2)
(2) edge node[auto] {} (3)
(4) edge node[below] {$\upiota$} (5)
(5) edge node[auto] {} (6);
\path[->]
(1) edge node[auto] {$\lift{h}$} (4);
\draw[double equal sign distance]
(3) -- (6) (2) -- (5);
\end{tikzpicture}
\]
clearly commutes and $K^h/K$ is clearly tame. It follows that $K^h/K$ is a tame solution to the embedding problem $(K/k,G,E_h)$.

Finally, for each $\gamma\in\Sigma$, choose $\upsigma(\gamma):=\lift{\gamma}|_{K^h}$ to be a lift of $\gamma$ in $\Gamma_h$. Given an element $s\in G$, there exists $\omega\in\Omega_K^t$ such that $h(\omega)=s$ because $h$ is surjective. The left $\Sigma$-action on $G$ defined as in (\ref{conjaction}) is then given by
\begin{align*}
\gamma*s
&=\lift{h}((\lift{\gamma}|_{K^h})(\omega|_{K^h})(\lift{\gamma}|_{K^h})^{-1})\\
&=h(\lift{\gamma}\omega\lift{\gamma}^{-1})\\
&=\gamma\cdot s,
\end{align*}
where the last equality follows because $h$ is $\Sigma$-invariant. This shows that the equivalence class of $E_h$ lies in $E(K/k,G)$. Also, the map $c_{E_h}:\Sigma\times\Sigma\longrightarrow G$ in (\ref{cE}) is given by
\begin{align*}
c_{E_h}(\gamma,\delta)
&=\lift{h}((\lift{\gamma}|_{K^h})(\lift{\delta}|_{K^h})(\lift{\gamma\delta}|_{K^h})^{-1})\\
&=h((\lift{\gamma})(\lift{\delta})(\lift{\gamma\delta})^{-1})
\end{align*}
and so the equivalence class of $E_h$ corresponds to $tr(h)$. This completes the proof of the proposition.
\end{proof}

\subsection{The Bottom Row: Fr\"{o}hlich-Wall Sequence}Observe that $\cO_KG$ is equipped with a canonical left $\Sigma$-action, namely that induced by the given left $\Sigma$-action on $G$ and $\cO_K$. For all $\gamma\in\Sigma$ and $\beta,\beta'\in\cO_KG$, we have
\[
\gamma\cdot(\beta+\beta')=\gamma\cdot\beta+\gamma\cdot\beta'
\hspace{1cm}\mbox{and}\hspace{1cm}
\gamma\cdot(\beta\beta')=(\gamma\cdot\beta)(\gamma\cdot\beta').
\]
In other words, the ring $\cO_KG$ is a \emph{$\Sigma$-ring}.  From the Fr\"{o}hlich-Wall sequence associated to $\cO_KG$ (see \cite[Section 1]{B}, for example), we obtain a homomorphism
\[
\xi:\mbox{Cl}(\cO_KG)^\Sigma\longrightarrow H^2(\Sigma,(\cO_KG)^\times).
\]
We will recall the definitions of the $\Sigma$-action on $\mbox{Cl}(\cO_KG)$ and the map $\xi$.

\subsubsection{The left $\Sigma$-action on $\mbox{Cl}(\cO_KG)$}\label{s:6.2.1}

\begin{definition} Let $X$ and $X'$ be $\cO_KG$-modules. A \emph{semilinear isomorphism from $X$ to $X'$} is a group isomorphism $\varphi:X\longrightarrow X'$ satisfying
\[
\varphi(\beta\cdot x)=(\gamma\cdot\beta)\cdot\varphi(x)\hspace{1cm}\mbox{for all }\beta\in\cO_KG\mbox{ and }x\in X
\]
for some $\gamma\in\Sigma$. Any such $\gamma\in\Sigma$ is called a \emph{grading of $\varphi$}.
\end{definition}

\begin{definition}\label{Sigmaaction} Let $[X]\in\Cl(\cO_KG)$ and $\gamma\in\Sigma$. Define $\gamma\cdot[X]:=[Y]$ if there exists a semilinear isomorphism $\varphi:X\longrightarrow Y$ having $\gamma$ as a grading. Clearly the isomorphism class $[Y]$ of $Y$ (recall Remark~\ref{stable}) is uniquely determined by that of $X$. Note also that such a $Y$ always exists, as we may take $Y:=X_\gamma$ to be the abelian group $X$ equipped with the structure
\begin{equation}\label{Xgamma}
\beta* x:=(\gamma^{-1}\cdot\beta)\cdot x
\hspace{1cm}\mbox{for }\beta\in(\cO_KG)^\times\mbox{ and }x\in X_\gamma
\end{equation}
as an $\cO_KG$-module and take $\varphi=\mbox{id}_X$ to be the identity on $X$.
\end{definition}

It is clear that Definition~\ref{Sigmaaction} defines a left $\Sigma$-action on the group $\Cl(\cO_KG)$. The next proposition shows that $\Cl(\cO_KG)$ is in fact a left $\Sigma$-module under this action and so $\mbox{Cl}(\cO_KG)^\Sigma$ is a subgroup of $\mbox{Cl}(\cO_KG)$. 

\begin{prop}Let $[X],[X']\in\Cl(\cO_KG)$. For all $\gamma\in\Sigma$, we have
\[
\gamma\cdot([X][X'])=(\gamma\cdot[X])(\gamma\cdot[X']).
\]
\end{prop}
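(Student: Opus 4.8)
The plan is to reduce everything to the explicit model $X_\gamma$ introduced in Definition~\ref{Sigmaaction} and then to check that the twisting operation $X\mapsto X_\gamma$ commutes with finite direct sums and sends $\cO_KG$ to a free module. First I would record two elementary facts, valid for all $\cO_KG$-modules and all $\gamma\in\Sigma$. (i) The identity map is an $\cO_KG$-module isomorphism $(X\oplus X')_\gamma\cong X_\gamma\oplus X'_\gamma$: indeed, by (\ref{Xgamma}) the twisted action $\beta*(x,x')=(\gamma^{-1}\cdot\beta)\cdot(x,x')$ is computed componentwise and so agrees with the direct sum of the twisted actions on $X_\gamma$ and $X'_\gamma$. (ii) The map $x\mapsto\gamma\cdot x$ is an $\cO_KG$-module isomorphism $(\cO_KG)_\gamma\cong\cO_KG$: since $\gamma\cdot(-)$ is a ring automorphism of $\cO_KG$, we have $\gamma\cdot\bigl((\gamma^{-1}\cdot\beta)x\bigr)=\beta(\gamma\cdot x)$ for all $\beta,x$, which is precisely the homomorphism property, and the map is bijective. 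I would also note the trivial remark that applying the twisting operation to any $\cO_KG$-module isomorphism $f\colon A\to B$ yields an $\cO_KG$-module isomorphism $f\colon A_\gamma\to B_\gamma$, by the same one-line computation.

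Next, since the identity on $X$ is a semilinear isomorphism $X\to X_\gamma$ with grading $\gamma$, Definition~\ref{Sigmaaction} gives $\gamma\cdot[X]=[X_\gamma]$ for every locally free $\cO_KG$-lattice $X$; in particular each $X_\gamma$ is again a locally free $\cO_KG$-lattice. Now let $[X],[X']\in\Cl(\cO_KG)$ and, as in Definition~\ref{LFCG}, choose a locally free $\cO_KG$-lattice $X''$ with $X\oplus X'\cong\cO_KG\oplus X''$, so that $[X][X']=[X'']$. Applying the twisting operation to this isomorphism and invoking (i) and (ii), I obtain
\[
X_\gamma\oplus X'_\gamma \;\cong\; (X\oplus X')_\gamma \;\cong\; (\cO_KG\oplus X'')_\gamma \;\cong\; \cO_KG\oplus (X'')_\gamma .
\]
Since $X_\gamma$, $X'_\gamma$ and $(X'')_\gamma$ are all locally free, the defining property of the group operation on $\Cl(\cO_KG)$ forces $[X_\gamma][X'_\gamma]=[(X'')_\gamma]$, which reads $(\gamma\cdot[X])(\gamma\cdot[X'])=\gamma\cdot([X][X'])$, as desired.

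I do not expect a genuine obstacle; the only points needing care are purely formal. One must check that the twisted module structure (\ref{Xgamma}) really distributes over direct sums, that $(\cO_KG)_\gamma$ is induced from $\cO_KG$ by the $\Sigma$-action and hence free of rank one, and that one may transport an honest $\cO_KG$-module isomorphism through the twisting operation. The last point is legitimate because Definition~\ref{LFCG} formulates the group law via genuine isomorphisms, and in any case stable isomorphism coincides with isomorphism for $G$ abelian by Remark~\ref{stable}.
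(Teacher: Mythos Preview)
Your argument is correct and is essentially the paper's own proof: both start from an isomorphism $\varphi\colon X\oplus X'\to\cO_KG\oplus X''$, twist by $\gamma$, and use that $(\cO_KG)_\gamma\cong\cO_KG$ via $\beta\mapsto\gamma\cdot\beta$ to conclude $X_\gamma\oplus X'_\gamma\cong\cO_KG\oplus X''_\gamma$. The only cosmetic difference is that the paper writes down the single composite $(\varphi_\gamma\oplus\mathrm{id}_{X''})\circ\varphi$ directly, whereas you decompose the passage into the three elementary observations (i), (ii), and the compatibility of twisting with isomorphisms.
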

\begin{proof}Let $[X'']\in\Cl(\cO_KG)$ be such that $[X'']=[X][X']$. By Definition~\ref{LFCG}, this means that there exists an $\cO_KG$-isomorphism
\[
\varphi:X\oplus X'\longrightarrow\cO_KG\oplus X''.
\]
Let $X_\gamma$ denote the group $X$ equipped with the $\cO_KG$-structure given in (\ref{Xgamma}), and similarly for $X_\gamma'$ and $X_\gamma''$. Let $\varphi_\gamma:\cO_KG\longrightarrow\cO_KG$ denote the bijective map defined by $\beta\mapsto\gamma\cdot\beta$. Then, the map
\[
(\varphi_\gamma\oplus\mbox{id}_{X''})\circ\varphi:X_\gamma\oplus X'_\gamma\longrightarrow\cO_KG\oplus X_\gamma''
\]
is an isomorphism of $\cO_KG$-modules and so $[X_\gamma'']=[X_\gamma][X_\gamma']$, as desired.
\end{proof}

The next proposition ensures that diagram (\ref{bd}) is well-defined.

\begin{prop}\label{Im(galA)}Let $h\in\Hom(\Omega_K^t,G)^\Sigma$. For all $\gamma\in\Sigma$, the map
\begin{equation}\label{phigamma}
\varphi_\gamma:\mathbf{r}_G(\cO_h)\longrightarrow\mathbf{r}_G(\cO_h);
\hspace{1em}\varphi_\gamma(\mathbf{r}_G(a)):=\lift{\gamma}\cdot\mathbf{r}_G(a)
\end{equation}
is well-defined and is a semilinear isomorphism having $\gamma$ as a grading. Consequently, we have $\gamma\cdot[\cO_h]=[\cO_h]$ and so $\gal(\Hom(\Omega_K^t,G)^\Sigma)\subset\Cl(\cO_KG)^\Sigma$.
\end{prop}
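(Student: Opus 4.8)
The plan is to exploit that $\mathbf{r}_G$ is an isomorphism of $K^cG$-modules, so that $\varphi_\gamma$ is automatically additive and injective once one knows it has the correct target; the real content of well-definedness is the inclusion $\lift{\gamma}\cdot\mathbf{r}_G(\cO_h)\subseteq\mathbf{r}_G(\cO_h)$. First I would observe that every $a\in\cO_h$ takes values in $\cO^h\subseteq K^t$, so $\mathbf{r}_G(a)$ lies in $K^tG$ and $\lift{\gamma}\in\Gal(K^t/k)$ genuinely acts on it. Writing $\lift{\gamma}\cdot\mathbf{r}_G(a)=\mathbf{r}_G(b)$ for the unique $b\in\Map(G,K^c)$, I would show $b\in\cO_h$, which by definition amounts to: (i) $b$ preserves the twisted $\Omega_K$-action on ${}^hG$, equivalently the resolvend identity (\ref{resol1}); and (ii) $b$ has values in $\cO^h$.

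For (i), I would compute for $\omega\in\Omega_K^t$ that $\omega\cdot\mathbf{r}_G(b)=(\omega\lift{\gamma})\cdot\mathbf{r}_G(a)=\lift{\gamma}\cdot\bigl((\lift{\gamma}^{-1}\omega\lift{\gamma})\cdot\mathbf{r}_G(a)\bigr)$, using that $\Omega_K^t$ is normal in $\Gal(K^t/k)$. Applying (\ref{resol1}) to $a$ rewrites the inner term as $\mathbf{r}_G(a)\,h(\lift{\gamma}^{-1}\omega\lift{\gamma})$, and unwinding the $\Sigma$-action of Definition~\ref{tr} with the lift $\lift{\gamma}^{-1}$ of $\gamma^{-1}$ (legitimate since $G$ is abelian), together with the $\Sigma$-invariance of $h$, gives $h(\lift{\gamma}^{-1}\omega\lift{\gamma})=\gamma^{-1}\cdot h(\omega)$. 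Finally, since $\lift{\gamma}$ acts on $K^tG$ as a ring automorphism whose effect on $G$ is through $\gamma\in\Sigma$, one gets $\lift{\gamma}\cdot\bigl(\mathbf{r}_G(a)(\gamma^{-1}\cdot h(\omega))\bigr)=\mathbf{r}_G(b)\,h(\omega)$, which is exactly (\ref{resol1}) for $b$. For (ii), the explicit formula $b=\lift{\gamma}\circ a\circ(\gamma^{-1}\cdot(-))$ read off from $\mathbf{r}_G(b)=\lift{\gamma}\cdot\mathbf{r}_G(a)$ shows $b$ has values in $\lift{\gamma}(\cO^h)$, and $\lift{\gamma}(\cO^h)=\cO^h$ because $K^h/k$ is Galois, i.e.\ $\ker(h)$ is normal in $\Gal(K^t/k)$ --- this is precisely the first computation in the proof of Proposition~\ref{Brelation}, which uses only the $\Sigma$-invariance of $h$.

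Once well-definedness is in hand, the remaining assertions are formal. Additivity of $\varphi_\gamma$ is clear; for semilinearity, for $\beta\in\cO_KG$ and $x\in\mathbf{r}_G(\cO_h)$ one has $\varphi_\gamma(\beta x)=\lift{\gamma}\cdot(\beta x)=(\lift{\gamma}\cdot\beta)(\lift{\gamma}\cdot x)=(\gamma\cdot\beta)\varphi_\gamma(x)$, since $\lift{\gamma}$ restricts to $\gamma$ on $\cO_K$ and acts on $G$ through $\gamma$, so that $\lift{\gamma}\cdot\beta$ is exactly the image of $\beta$ under the $\Sigma$-action on $\cO_KG$; hence $\gamma$ is a grading. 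Bijectivity follows because the map defined by the same formula with $\lift{\gamma}^{-1}$ in place of $\lift{\gamma}$ is a two-sided inverse, well-defined by the identical argument (again using the $\Sigma$-invariance of $h$). Finally, transporting $\varphi_\gamma$ along the $\cO_KG$-module isomorphism $\mathbf{r}_G:\cO_h\to\mathbf{r}_G(\cO_h)$ produces a semilinear automorphism of $\cO_h$ with grading $\gamma$, so $\gamma\cdot[\cO_h]=[\cO_h]$ by Definition~\ref{Sigmaaction}; as $\gamma\in\Sigma$ is arbitrary and $\gal(h)=\cl(\cO_h)=[\cO_h]$, this yields $\gal(\Hom(\Omega_K^t,G)^\Sigma)\subseteq\Cl(\cO_KG)^\Sigma$.

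The only genuine obstacle is the bookkeeping in step (i): one must carefully track that $\Omega_K^t$ acts trivially on $G$ while $\lift{\gamma}$ acts through $\gamma$, and must correctly identify the cocycle-type term $h(\lift{\gamma}^{-1}\omega\lift{\gamma})$ with $\gamma^{-1}\cdot h(\omega)$ via $\Sigma$-invariance and a non-standard choice of lift. Everything else is routine.
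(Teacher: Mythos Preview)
Your proof is correct and follows essentially the same route as the paper's: both reduce well-definedness to the inclusion $\lift{\gamma}\cdot\mathbf{r}_G(\cO_h)\subset\mathbf{r}_G(\cO_h)$, and both verify the resolvend identity~(\ref{resol1}) for $b$ by conjugating $\omega$ through $\lift{\gamma}$ and invoking the $\Sigma$-invariance of $h$ (the paper phrases this as $(h\cdot\gamma^{-1})(\omega)=h(\omega)$, which is exactly your identification $h(\lift{\gamma}^{-1}\omega\lift{\gamma})=\gamma^{-1}\cdot h(\omega)$). The only notable difference is in your step~(ii): you use the explicit formula $b=\lift{\gamma}\circ a\circ(\gamma^{-1}\cdot(-))$ together with the normality of $\ker(h)$ in $\Gal(K^t/k)$ to get $\lift{\gamma}(\cO^h)=\cO^h$, whereas the paper simply observes that once $a'\in K_h$ is established, its values automatically lie in $K^h$ (any $\Omega_K$-equivariant map ${}^hG\to K^c$ has image fixed by $\ker(h)$), and integrality is preserved because $\lift{\gamma}$ stabilizes $\cO_{K^t}$ --- so the appeal to Proposition~\ref{Brelation} is not needed here.
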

\begin{proof}Let $\gamma\in\Sigma$ be given. First, we will verify \mbox{that $\varphi_\gamma(\mathbf{r}_G(\cO_h))\subset\mathbf{r}_G(\cO_h)$  so} that $\varphi_\gamma$ is well-defined. To that end, let $a\in\cO_h$ be given. Since $\mathbf{r}_G$ is bijective, there exists $a'\in\Map(G,K^c)$ such that $\mathbf{r}_G(a')=\lift{\gamma}\cdot\mathbf{r}_G(a)$. We will use (\ref{resol1}) to show that $a\in K_h$. Given $\omega\in\Omega_K^t$, note that $\lift{\gamma}^{-1}\omega\lift{\gamma}\in\Omega_K^t$ and that
\[
\lift{\gamma}^{-1}\omega\lift{\gamma}\cdot\mathbf{r}_G(a)=\mathbf{r}_G(a)h(\lift{\gamma}^{-1}\omega\lift{\gamma})
\]
since $a\in K_h$. This implies that
\begin{align*}
\omega\cdot\mathbf{r}_G(a')
&=\lift{\gamma}\cdot(\lift{\gamma}^{-1}\omega\lift{\gamma}\cdot\mathbf{r}_G(a))\\
&=\mathbf{r}_G(a')(h\cdot\gamma^{-1})(\omega)\\
&=\mathbf{r}_G(a')h(\omega),
\end{align*}
where the last equality follows because $h$ is $\Sigma$-invariant. It then follows from (\ref{resol1}) that $a'\in K_h$. Since $a\in\cO_h$, it is clear that $a'\in\cO_h$ as well. Hence, the map $\varphi_\gamma$ is well-defined. Once we have established that $\varphi_\gamma$ is well-defined, it is clear that $\varphi_\gamma$ is a semilinear isomorphism having $\gamma$ as a grading. Since $\mathbf{r}_G$ restricts to an $\cO_KG$-isomorphism $\cO_h\simeq\mathbf{r}_G(\cO_h)$, we \mbox{have $[\cO_h]=[\mathbf{r}_G(\cO_h)]$ and} the above shows that $\gamma\cdot[\cO_h]=[\cO_h]$.
\end{proof}

\subsubsection{The homomorphism $\xi$}

\begin{definition}Let $X$ be $\cO_KG$-module. Define $\mbox{Sem}(X)$ to be the set of all pairs $(\varphi,\gamma)$, where $\varphi:X\longrightarrow X$ is a semilinear isomorphism having $\gamma$ as a grading, equipped with the group operation $(\varphi,\gamma)(\varphi',\gamma'):=(\varphi\varphi',\gamma\gamma')$. Also, let $\mbox{Aut}(X)$ denote the group of $\cO_KG$-automorphisms on $X$. The map
\[
\mathfrak{g}_X:\mbox{Sem}(X)\longrightarrow\Sigma;\hspace{1em}\mathfrak{g}_X(\varphi,\gamma):=\gamma
\]
is then a homomorphism with $\ker(\mathfrak{g}_X)=\mbox{Aut}(X)$.
\end{definition}

Now, consider an element $[X]\in\mbox{Cl}(\cO_KG)^\Sigma$.   The map $\mathfrak{g}_X$ is then surjective because $[X]$ is $\Sigma$-invariant. Since $X$ is locally free over $\cO_KG$ (of rank one), an $\cO_KG$-automorphism on $X$ is of the form $\psi_\beta:x\mapsto\beta\cdot x$ for some $\beta\in(\cO_KG)^\times$. So, we may identify $\mbox{Aut}(X)$ with $(\cO_KG)^\times$. We then obtain a group extension
\[
\begin{tikzcd}[column sep=1cm]
E_X:\hspace{0.5cm}1 \arrow{r} &
(\cO_KG)^\times\arrow{r}[font=\normalsize, auto]{\mathfrak{i}_X} &
\mbox{Sem}(X) \arrow{r}[font=\normalsize, auto]{\mathfrak{g}_X} &
\Sigma \arrow{r} &
1
\end{tikzcd}
\]
of $\Sigma$ by $(\cO_KG)^\times$, where $\mathfrak{i}_X(\beta):=(\psi_\beta,1)$. Observe that $E_X$ induces a left $\Sigma$- module structure on $(\cO_KG)^\times$ via conjugation in $\mbox{Sem}(X)$ as follows. For each $\gamma\in\Sigma$, choose a lift $(\varphi_\gamma,\gamma)$ of $\gamma$ in $\mbox{Sem}(\Sigma)$. Then, define (cf. (\ref{conjaction}))
\begin{equation}\label{conjaction'}
\gamma*\beta:=\upiota_X^{-1}((\varphi_\gamma\psi_\beta\varphi_\gamma^{-1},1))
\hspace{1cm}\mbox{for $\gamma\in\Gamma$ and $\beta\in(\cO_KG)^\times$}.
\end{equation}
But for any $x\in(\cO_KG)^\times$, we have 
\[
(\varphi_\gamma\psi_\beta\varphi_\gamma^{-1})(x)=\varphi_\gamma(\beta\cdot\varphi_\gamma^{-1}(x))=(\gamma\cdot\beta)\cdot x=\varphi_{\gamma\cdot\beta}(x).
\]
This means that (\ref{conjaction'}) agrees with the existing left $\Sigma$-action on $(\cO_KG)^\times$. So, analogously to the bijective correspondence between $E(K/k,G)$ and $H^2(\Sigma,G)$ described in Subsection~\ref{top row}, the group extension $E_X$ corresponds to a cohomology class in $H^2(\Sigma,(\cO_KG)^\times)$. In particular, the class is represented by the $2$-cocycle $d_{X}:\Sigma\times\Sigma\longrightarrow(\cO_KG)^\times$ determined by the equations (cf. (\ref{cE}))
\begin{equation}\label{dX}
d_X(\gamma,\delta)\cdot x=(\varphi_\gamma\varphi_\delta\varphi_{\gamma\delta}^{-1})(x)\hspace{1cm}
\mbox{for all }x\in X.
\end{equation}

\begin{definition}\label{xi}Define $\xi:\Cl(\cO_KG)^\Sigma\longrightarrow H^2(\Sigma,(\cO_KG)^\times)$ by $\xi([X]):=[d_X]$, where $[-]$ denotes the cohomology class. It is not hard to check that this definition depends only on the isomorphism class $[X]$ of $X$ (recall Remark~\ref{stable}).
\end{definition}

\begin{prop}\label{xihom}The map $\xi$ is a homomorphism.
\end{prop}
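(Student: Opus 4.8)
The plan is to show that $\xi([X][X'])=\xi([X])\xi([X'])$ by constructing, from the group extensions $E_X$ and $E_{X'}$, the group extension $E_{X''}$ where $[X'']=[X][X']$, and checking that the associated $2$-cocycle $d_{X''}$ differs from $d_Xd_{X'}$ by a coboundary. First I would fix, for each $\gamma\in\Sigma$, a semilinear isomorphism $\varphi_\gamma:X\to X$ with grading $\gamma$ (e.g.\ $\varphi_\gamma=\mathrm{id}$ under the identification $X_\gamma$ of Definition~\ref{Sigmaaction}, but any choice works) and similarly $\varphi_\gamma':X'\to X'$. By Definition~\ref{LFCG} there is an $\cO_KG$-isomorphism $\theta:X\oplus X'\to\cO_KG\oplus X''$. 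Using $\theta$, I transport the semilinear isomorphism $\varphi_\gamma\oplus\varphi_\gamma'$ of $X\oplus X'$ to a semilinear isomorphism of $\cO_KG\oplus X''$ with grading $\gamma$; composing with $\varphi_\gamma^{\cO_KG,-1}\oplus\mathrm{id}$, where $\varphi_\gamma^{\cO_KG}:\beta\mapsto\gamma\cdot\beta$ is the standard semilinear isomorphism of $\cO_KG$, I obtain an $\cO_KG$-automorphism of $\cO_KG\oplus X''$ in the first case and hence, by restriction, a semilinear isomorphism $\varphi_\gamma'':X''\to X''$ with grading $\gamma$. (One must check the resulting map preserves the $X''$-summand; this is where one uses that an $\cO_KG$-automorphism of $\cO_KG\oplus X''$ need not respect the decomposition, so a short argument via localization or via the uniqueness of stable class is required — but for the cocycle computation it suffices to work with $\mathrm{Sem}(\cO_KG\oplus X'')$ directly, see below.)

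A cleaner route, which I would actually carry out, avoids decomposing the automorphism: work with $Y:=\cO_KG\oplus X''$ throughout. The chosen $\varphi_\gamma,\varphi_\gamma'$ and the standard $\varphi_\gamma^{\cO_KG}$ give, via $\theta$, a family $\Phi_\gamma:=\theta\circ(\varphi_\gamma\oplus\varphi_\gamma')\circ\theta^{-1}$ of semilinear isomorphisms of $Y$ with grading $\gamma$. By (\ref{dX}) applied to $X\oplus X'$ (whose semilinear structure is the product one, so its cocycle is $d_X\cdot d_{X'}$ acting componentwise, i.e.\ the cocycle of $E_{X\oplus X'}$ is $(d_X,d_{X'})$ under $(\cO_KG)^\times\times(\cO_KG)^\times\hookrightarrow\mathrm{Aut}(X\oplus X')$), and by the fact that $\theta$ is an $\cO_KG$-isomorphism (so conjugation by $\theta$ identifies $\mathrm{Sem}(X\oplus X')$ with $\mathrm{Sem}(Y)$ compatibly with the gradings), the cocycle attached to the family $\Phi_\gamma$ on $Y$ is exactly $\Delta(d_X\cdot d_{X'})$, where $\Delta:(\cO_KG)^\times\to\mathrm{Aut}(Y)=\mathrm{Aut}(\cO_KG\oplus X'')$ is the diagonal $\beta\mapsto(\psi_\beta,\psi_\beta)$. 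On the other hand, $\xi([Y])=\xi([X''])$ is computed from any family $\varphi_\gamma^{\cO_KG}\oplus\varphi_\gamma''$ and equals $\Delta(d_{X''})$ under the same identification $\mathrm{Aut}(Y)\supset(\cO_KG)^\times$ acting by $\psi_\beta\oplus\psi_\beta$. Since both families $\Phi_\gamma$ and $\varphi_\gamma^{\cO_KG}\oplus\varphi_\gamma''$ have grading $\gamma$, they differ by $\cO_KG$-automorphisms of $Y$, so their cocycles are cohomologous in $H^2(\Sigma,\mathrm{Aut}(Y))$; pushing forward along the (split, hence injective on $H^2$) inclusion $(\cO_KG)^\times\hookrightarrow\mathrm{Aut}(Y)$, $\beta\mapsto\psi_\beta\oplus\psi_\beta$, yields $[d_Xd_{X'}]=[d_{X''}]$ in $H^2(\Sigma,(\cO_KG)^\times)$, which is the claim.

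The last step needs the remark that for a locally free rank-one $\cO_KG$-module $Z$ the inclusion $(\cO_KG)^\times\to\mathrm{Aut}(Z)$, $\beta\mapsto\psi_\beta$, is an isomorphism (stated in the excerpt just before the definition of $E_X$: ``an $\cO_KG$-automorphism on $X$ is of the form $\psi_\beta$''), so in fact $\mathrm{Aut}(Y)$ is not literally $(\cO_KG)^\times$ since $Y=\cO_KG\oplus X''$ has rank two; here one instead notes that $Y$ is still locally free, but of rank two, so $\mathrm{Aut}(Y)$ is a matrix group and the argument ``cocycles of two families with the same grading are cohomologous in $H^2(\Sigma,\mathrm{Aut}(Y))$ and then push to $(\cO_KG)^\times$'' must be replaced by the direct observation that $\varphi_\gamma''$ may itself be \emph{defined} as the second component of $\Phi_\gamma$ after correcting the first component to $\varphi_\gamma^{\cO_KG}$ — legitimate precisely because any semilinear isomorphism of $Y$ with grading $\gamma$, followed by $(\varphi_\gamma^{\cO_KG})^{-1}\oplus\mathrm{id}$, is an $\cO_KG$-automorphism whose restriction/projection behavior is controlled by the isomorphism $X\oplus X'\simeq\cO_KG\oplus X''$. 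I expect the main obstacle to be exactly this bookkeeping: making precise that the cocycle of a direct sum is the product of the cocycles (componentwise), and that replacing $\theta\circ(\varphi_\gamma\oplus\varphi_\gamma')\circ\theta^{-1}$ by a family of the standard form $\varphi_\gamma^{\cO_KG}\oplus\varphi_\gamma''$ changes the cocycle only by a coboundary valued in $(\cO_KG)^\times$ (diagonally embedded). Everything else — well-definedness of $\varphi_\gamma''$, the $2$-cocycle identity, independence of the lifts $\lift\gamma$ — is routine and parallels the arguments already given for Proposition~\ref{Im(galA)} and the construction of $d_X$.
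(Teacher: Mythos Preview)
There is a genuine gap in the direct-sum approach. The cocycle attached to your family $\Phi_\gamma=\theta\circ(\varphi_\gamma\oplus\varphi'_\gamma)\circ\theta^{-1}$ on $Y=\cO_KG\oplus X''$ is $\theta\circ(\psi_{d_X(\gamma,\delta)}\oplus\psi_{d_{X'}(\gamma,\delta)})\circ\theta^{-1}$, and this is \emph{not} $\Delta(d_Xd_{X'})$: unless $d_X(\gamma,\delta)=d_{X'}(\gamma,\delta)$, the automorphism $\psi_{d_X}\oplus\psi_{d_{X'}}$ is not a scalar, so its conjugate by $\theta$ has no reason to lie in the diagonal copy of $(\cO_KG)^\times$ inside $\mathrm{Aut}(Y)$. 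Likewise, the cocycle of the family $\varphi_\gamma^{\cO_KG}\oplus\varphi''_\gamma$ is $(1,d_{X''})$, not $\Delta(d_{X''})$, since $\varphi_\gamma^{\cO_KG}\varphi_\delta^{\cO_KG}(\varphi_{\gamma\delta}^{\cO_KG})^{-1}=\mathrm{id}$. So neither of the two cocycles you compare sits in the image of $\Delta$; the comparison genuinely lives in the non-abelian group $\mathrm{Aut}(Y)$ (a twisted form of $GL_2(\cO_KG)$), where $H^2$ is only a pointed set and a split embedding of $(\cO_KG)^\times$ gives you nothing. You flag this yourself, but the proposed patch --- defining $\varphi''_\gamma$ as ``the second component of $\Phi_\gamma$ after correcting the first'' --- does not work, because $\Phi_\gamma$ need not preserve the summand $X''$ at all.

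The paper sidesteps all of this by using the \emph{tensor} description of the class-group multiplication: since $G$ is abelian, $X'':=X\otimes_{\cO_KG}X'$ is again locally free of rank one and satisfies $[X][X']=[X'']$. Then $\varphi''_\gamma:=\varphi_\gamma\otimes\varphi'_\gamma$ is a bona fide semilinear automorphism of the rank-one module $X''$, and a two-line computation with (\ref{dX}) gives $d_{X''}=d_Xd_{X'}$ as an equality of $2$-cocycles, not merely of classes. If you want to rescue the direct-sum picture, the honest fix is to apply $\Lambda^2_{\cO_KG}$ to $\theta$: since $\Lambda^2(X\oplus X')\simeq X\otimes_{\cO_KG}X'$ and $\Lambda^2(\cO_KG\oplus X'')\simeq X''$, this literally reproduces the tensor argument, and taking ``determinants'' of your non-abelian cocycle identity yields $d_Xd_{X'}$ on one side and $d_{X''}$ on the other.
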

\begin{proof}Given $[X],[X']\in\Cl(\cO_KG)^\Sigma$, define $X'':=X\otimes_{\cO_KG}X'$. Because $G$ is abelian, \cite[Theorem 55.16]{CR2} implies that $X''$ is locally free over $\cO_KG$ (of rank one), and we have $[X][X']=[X'']$ (cf. the proof of \cite[Theorem 55.26]{CR2}).

For each $\gamma\in\Sigma$, let $\varphi_\gamma$ and $\varphi_\gamma'$ be semilinear \mbox{automorphisms on $X$ and $X'$,} respectively, having $\gamma$ as a grading. Such automorphisms exist because both $[X]$ and $[X']$ are $\Sigma$-invariant. It is clear that $\varphi_{\gamma}'':=\varphi_\gamma\otimes\varphi_\gamma'$ is then a semilinear automorphism on $X''$ having $\gamma$ as a grading. Let $d_X,d_{X'},$ and $d_{X''}$ be defined as in (\ref{dX}). Then, for all $\gamma,\delta\in\Sigma$, $x\in X$, and $x'\in X'$, we have
\begin{align*}
d_{X''}(\gamma,\delta)\cdot (x\otimes x')
&=(\varphi_\gamma''\varphi_\delta''\varphi_{\gamma\delta}''^{-1})(x\otimes x')\\
&=(\varphi_\gamma\varphi_\delta\varphi_{\gamma\delta}^{-1})(x)\otimes(\varphi_\gamma'\varphi_\delta'\varphi_{\gamma\delta}'^{-1})(x')\\
&=(d_X(\gamma,\delta)\cdot x)\otimes(d_{X'}(\gamma,\delta)\cdot x')\\
&=(d_X(\gamma,\delta)d_{X'}(\gamma,\delta))\cdot(x\otimes x').
\end{align*}
This shows that $d_{X''}=d_Xd_{X'}$ and so $\xi([X''])=\xi([X])\xi([X'])$, as desired.
\end{proof}

\subsection{Commutativity} We now give a proof that the basic diagram (\ref{bd}) is commutative.

\begin{thm}\label{commutes}
The basic diagram (\ref{bd}) is commutative. Moreover, the row at the top is exact, and all of the maps except possibly gal are homomorphisms.
\end{thm}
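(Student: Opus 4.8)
The plan is to verify the three assertions of Theorem~\ref{commutes} separately, having already done most of the work in the preceding subsections. The exactness of the top row is immediate: the sequence (\ref{toprow}) was obtained from the Hochschild–Serre spectral sequence (equivalently, the five-term inflation–restriction sequence) associated to the extension $1\to\Omega_K^t\to\Gal(K^t/k)\to\Sigma\to1$, and exactness at $\Hom(\Omega_K^t,G)^\Sigma$ is a standard fact about that sequence. That $\mbox{res}$ and $tr$ are homomorphisms is part of the same standard package, and that $i^*$ is a homomorphism is clear since it is induced by the group homomorphism $G\to(\cO_KG)^\times$. That $\xi$ is a homomorphism was proved in Proposition~\ref{xihom}. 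The only map not asserted to be a homomorphism is $\gal$, which is consistent with its mere weak multiplicativity (\ref{galweak}).

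The substance of the theorem is the commutativity of the square, i.e. that $\xi\circ\gal = i^*\circ tr$ on $\Hom(\Omega_K^t,G)^\Sigma$. First I would fix $h\in\Hom(\Omega_K^t,G)^\Sigma$ and choose $a\in K_h$ with $\mathbf{r}_G(a)\in(K^cG)^\times$ so that $K_h=KG\cdot a$ (such $a$ exists by Proposition~\ref{NBG}(a)); one can even arrange, working locally via Remark~\ref{localize} and Proposition~\ref{NBG}(b), that $\mathbf{r}_G(a)$ is a unit idele away from the ramified primes, so that $\cl(\cO_h)=j(\mathbf{r}_G(a)^{-1}\text{-type idele})$ — but for the cocycle computation it is cleanest just to work with a single $a$ with invertible resolvend and track $\xi$ through the semilinear structure. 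By Proposition~\ref{Im(galA)}, the semilinear automorphisms $\varphi_\gamma$ of $\mathbf{r}_G(\cO_h)\cong\cO_h$ realizing the $\Sigma$-invariance of $[\cO_h]$ are given by $\varphi_\gamma(\mathbf{r}_G(x))=\lift\gamma\cdot\mathbf{r}_G(x)$. Plugging these into the defining equation (\ref{dX}) for the cocycle $d_{\cO_h}$ representing $\xi(\gal(h))$, one computes for $x=\mathbf{r}_G(a)$:
\[
d_{\cO_h}(\gamma,\delta)\cdot\mathbf{r}_G(a)=(\varphi_\gamma\varphi_\delta\varphi_{\gamma\delta}^{-1})(\mathbf{r}_G(a))=\bigl((\lift\gamma)(\lift\delta)(\lift{\gamma\delta})^{-1}\bigr)\cdot\mathbf{r}_G(a).
\]
Now $(\lift\gamma)(\lift\delta)(\lift{\gamma\delta})^{-1}$ lies in $\Omega_K^t$, so by (\ref{resol1}) its action on $\mathbf{r}_G(a)$ is right multiplication by $h\bigl((\lift\gamma)(\lift\delta)(\lift{\gamma\delta})^{-1}\bigr)\in G$. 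Since $\mathbf{r}_G(a)$ is invertible in $K^cG$ and lies in — or rather induces — an $\cO_KG$-generator, cancelling $\mathbf{r}_G(a)$ identifies $d_{\cO_h}(\gamma,\delta)$ with the element $h\bigl((\lift\gamma)(\lift\delta)(\lift{\gamma\delta})^{-1}\bigr)\in G\subset(\cO_KG)^\times$. Comparing with Definition~\ref{tr}, this is exactly a $2$-cocycle representative of $tr(h)$, pushed into $(\cO_KG)^\times$; hence $\xi(\gal(h))=i^*(tr(h))$.

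The main obstacle — really the only delicate point — is bookkeeping: making precise that the cocycle equation (\ref{dX}), which is phrased for the action of $d_{\cO_h}(\gamma,\delta)\in(\cO_KG)^\times$ on all of $X=\cO_h$, is determined by its effect on the single generator $\mathbf{r}_G(a)$, and that the resulting element of $(F^cG)^\times$ genuinely lies in $(\cO_KG)^\times$ (not just in $(K^cG)^\times$) so that the identification with $i^*(tr(h))$ makes sense. This requires noting that $\lift\gamma$ acts $\cO_K$-semilinearly on $\cO_h=\Map_{\Omega_K^t}({}^hG,\cO^h)$ and permutes this module, so $\varphi_\gamma$ indeed lands in $\mbox{Sem}(\cO_h)$ with grading $\gamma$ as established in Proposition~\ref{Im(galA)}; then $\varphi_\gamma\varphi_\delta\varphi_{\gamma\delta}^{-1}$ has grading $1$, i.e. is an honest $\cO_KG$-automorphism, so corresponds to multiplication by some $\beta\in(\cO_KG)^\times$, and the computation above shows $\beta=h\bigl((\lift\gamma)(\lift\delta)(\lift{\gamma\delta})^{-1}\bigr)$, which is visibly in $G$. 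I would also remark that the choice of $a$ (equivalently, of the generator) only changes $d_{\cO_h}$ by a coboundary, so $\xi(\gal(h))$ is well-defined independently of it — consistent with Definition~\ref{xi} — and likewise the cocycle for $tr(h)$ is independent of the lifts $\lift\gamma$ by Definition~\ref{tr}. With these remarks in place the diagram chase is complete.
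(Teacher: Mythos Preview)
Your proposal is correct and follows essentially the same approach as the paper: both reduce to verifying $i^*\circ tr = \xi\circ\gal$ by computing the $2$-cocycle $d_X$ for $X=\mathbf{r}_G(\cO_h)$ via the semilinear automorphisms $\varphi_\gamma$ of Proposition~\ref{Im(galA)}, then invoking (\ref{resol1}) to identify $(\lift\gamma)(\lift\delta)(\lift{\gamma\delta})^{-1}\cdot x$ with $h\bigl((\lift\gamma)(\lift\delta)(\lift{\gamma\delta})^{-1}\bigr)\cdot x$. The paper's version is marginally cleaner in that it never singles out a generator $a$ with $K_h=KG\cdot a$: since (\ref{resol1}) holds for every $a\in K_h$, and in particular for every $a\in\cO_h$, the cocycle identity $d_X(\gamma,\delta)\cdot x = h\bigl((\lift\gamma)(\lift\delta)(\lift{\gamma\delta})^{-1}\bigr)\cdot x$ follows directly for all $x\in X=\mathbf{r}_G(\cO_h)$, so your ``bookkeeping'' paragraph about whether the chosen $\mathbf{r}_G(a)$ lies in $X$ becomes unnecessary.
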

\begin{proof}Notice that diagram (\ref{bd}) is well-defined by Proposition~\ref{Im(galA)}. Now, we already know that the top row is exact. The maps $\mbox{res}$, $tr$, and $i^*$ are plainly homomorphisms, and $\xi$ is a homomorphism by Proposition~\ref{xihom}. Hence, it remains to verify the equality $i^*\circ tr=\xi\circ\gal$.

So, let $h\in\Hom(\Omega_K^t,G)^\Sigma$ be given. By Definition~\ref{tr}, the class $(i^*\circ tr)(h)$ is represented by the $2$-cocycle $d:\Sigma\times\Sigma\longrightarrow(\cO_KG)^\times$ defined by
\[
d(\gamma,\delta):=h((\lift{\gamma})(\lift{\delta})(\lift{\gamma\delta})^{-1}).
\]
Next, set $X:=\mathbf{r}_G(\cO_h)$ and notice that $\mathbf{r}_G$ restricts to an $\cO_KG$-isomorphism $\cO_h\simeq X$, so $\gal(h)=[X]$. For each $\gamma\in\Sigma$, let $\varphi_\gamma:X\longrightarrow X$ \mbox{be defined as} in (\ref{phigamma}), which is a semilinear isomorphism having $\gamma$ as a grading by Proposition~\ref{Im(galA)}. By Definition~\ref{xi}, the class $(\xi\circ\gal)(h)$ is then represented by the $2$-cocycle $d_X:\Sigma\times\Sigma\longrightarrow(\cO_KG)^\times$ defined by the equations
\[
d_X(\gamma,\delta)\cdot x=((\lift{\gamma})(\lift{\delta})(\lift{\gamma\delta})^{-1})\cdot x\hspace{1cm}\mbox{for all }x\in X.
\]
But $(\lift{\gamma})(\lift{\delta})(\lift{\gamma\delta})^{-1}\in\Omega_K^t$ and $x\in\mathbf{r}_G(\cO_h)$. It then follows from (\ref{resol1}) that
\[
((\lift{\gamma})(\lift{\delta})(\lift{\gamma\delta})^{-1})\cdot x=h((\lift{\gamma})(\lift{\delta})(\lift{\gamma\delta})^{-1}))\cdot x
\hspace{1cm}\mbox{for all }x\in X.
\]
This shows that $d_X=d$, whence $(i^*\circ tr)(h)=(\xi\circ\mbox{gal})(h)$, as desired.
\end{proof}

\section{Characterization of Realizable Classes}\label{R}

Throughout this section, assume that $G$ is abelian. For the moment, let $F$ be a number field. Given a tame $h\in\Hom(\Omega_F,G)$, recall that \mbox{a classical theo-} rem of Noether implies that $\cO_h$ is locally free over $\cO_FG$, and so $\cO_h$ defines a class $\cl(\cO_h)$ in $\Cl(\cO_FG)$. 

Below, we will explain how the class $\cl(\cO_h)$ may be computed \mbox{using resol-} vends. It will be helpful to recall the notation introduced in (\ref{OGc}) and (\ref{j}). First, since $\cO_h$ is locally free over $\cO_FG$ (of rank one), for each $v\in M_F$ there exists $a_v\in\cO_{h_v}$ such that
\vspace{-1mm}
\begin{equation}\label{av}
\cO_{h_v}=\cO_{F_v}G\cdot a_v.
\end{equation}
Next, by the Normal Basis Theorem, there exists $b\in F_h$ such that
\begin{equation}\label{b}
F_h=FG\cdot b.
\end{equation}
Since $F_vG\cdot a_v=F_{h_v}=F_vG\cdot b$ for all $v\in M_F$ and $\cO_{F_v}G\cdot a_v=\cO_{F_v}G\cdot b$ for all but finitely may $v\in M_F$, there exists $c=(c_v)\in J(FG)$ such that
\begin{equation}\label{cv}
a_v=c_v\cdot b
\end{equation}
for all $v\in M_F$. The $FG$-module isomorphism $FG\longrightarrow F_h$ defined by $\beta\mapsto\beta\cdot b$ then restricts to an $\cO_FG$-module isomorphism $\cO_FG\cdot c\longrightarrow \cO_h$. It follows that $\cl(\cO_h)=j(c)$. Now, recall that the resolvend map $\mathbf{r}_G:\mbox{Map}(G,F_v^c)\longrightarrow F_v^cG$ is an $F_vG$-module isomorphism. Equation (\ref{cv}) is then equivalent to
\begin{equation}\label{eq1}
\mathbf{r}_G(a_v)=c_v\cdot\mathbf{r}_G(b).
\end{equation}
This means that in order to compute the class $\cl(\cO_h)$, it suffices to compute  \par\noindent  the resolvends $\mathbf{r}_G(b)$ and $\mathbf{r}_G(a_v)$. Note that the resolvend $\mathbf{r}_G(b)$ of an element $b\in F_h$ satisfying (\ref{b}) is already characterized by Proposition~\ref{NBG} (a). 

The purpose of this section is to recall the characterization of $\cl(\cO_h)$ proved by McCulloh in \cite{M} (see Theorem~\ref{char1} below and the discussion following (\ref{char2})). To avoid repetition, we will only give an overview of the main ideas involved below, and then recall the necessary definitions in the subsequent subsections. 

As we already saw above, the characterization of $\cl(\cO_h)$ reduces to that of the resolvend $\mathbf{r}_G(a_v)$ of an element $a_v\in\cO_{h_v}$ satisfying (\ref{av}) for each $v\in M_F$. \par\noindent To that end, recall from Remark~\ref{tamesubgp} that $h_v$ may be regarded as an element of $\Hom(\Omega_{F_v}^t,G)$. We will see in Definition~\ref{factorh} that $h_v$ factors into $h_v=h_v^{nr}h_{v}^{tot}$, where $h_{v}^{nr},h_v^{tot}\in\Hom(\Omega_{F_v}^t,G)$
are such that $h_v^{nr}$ is unramified and $F_v^{h_v^{tot}}/F_v$ is totally ramified. As in the proof of \cite[Theorem 5.6]{M}, we may then decompose the resolvend $\mathbf{r}_G(a_v)$ as
\vspace{-1mm}
\begin{equation}\label{eq2}
\mathbf{r}_G(a_v)=\mathbf{r}_G(a_{v,nr})\mathbf{r}_G(a_{v,tot}),
\end{equation}
where $\cO_{h_v^{nr}}=\cO_{F_v}G\cdot a_{v,nr}$ and $\cO_{h_v^{tot}}=\cO_{F_v}G\cdot a_{v,tot}$. Notice that the resolvend $\mathbf{r}_G(a_{v,nr})$ of such an element $a_{v,nr}\in\cO_{h_v^{nr}}$ is already characterized by Propo- sition~\ref{NBG} (b). As for the resolvend $\mathbf{r}_G(a_{v,tot})$, it may be characterized using the \emph{Stickelberger transpose} and \emph{local prime $\mathfrak{F}$-elements}, which we will define in Subsections~\ref{Stickel sec} and~\ref{local F sec}, respectively. Moreover, rather than resolvends we will in fact use \emph{reduced resolvends}, which we will define in Subsection~\ref{reduced}.

\subsection{Cohomology and Reduced Resolvends}\label{reduced}

Let $F$ be a number field or a finite extension of $\mathbb{Q}_p$. Following \cite[Sections 1 and 2]{M}, \mbox{we will define reduced} resolvends and then interpret them as functions on characters of $G$.

Recall that $\Omega_F$ acts trivially on $G$ by definition. Define
\[
\mathcal{H}(FG):=((F^cG)^\times/G)^{\Omega_F}
\hspace{1cm}\mbox{and}\hspace{1cm}
\mathcal{H}(\cO_FG):=((\cO_{F^c}G)^\times/G)^{\Omega_F}.
\]
Taking $\Omega_F$-cohomology of the short exact sequence
\begin{equation}\label{exact1}
\begin{tikzcd}[column sep=1cm, row sep=1.5cm]
1 \arrow{r} &
G \arrow{r} &
(F^{c}G)^{\times} \arrow{r} &
(F^{c}G)^{\times}/G \arrow{r}&
1
\end{tikzcd}
\end{equation}
then yields the exact sequence
\begin{equation}\label{exact rag}
\begin{tikzcd}[column sep=1cm, row sep=1.5cm]
1 \arrow{r} &
G \arrow{r} &
(FG)^{\times} \arrow{r}[font=\normalsize]{rag} &
\mathcal{H}(FG) \arrow{r}[font=\normalsize]{\delta}&
\Hom(\Omega_F,G) \arrow{r}&
1,
\end{tikzcd}
\end{equation}
where $H^1(\Omega_F,(F^cG)^\times)=1$ follows from Hilbert's Theorem 90. Alternatively, given $h\in\Hom(\Omega_F,G)$, observe that a coset $\mathbf{r}_G(a)G\in\mathcal{H}(FG)$ belongs to the preimage of $h$ under $\delta$ if and only if
\[
h(\omega)=\mathbf{r}_G(a)^{-1}(\omega\cdot\mathbf{r}_G(a))\hspace{1cm}
\mbox{for all }\omega\in\Omega_F,
\]
which is equivalent to $F_h=FG\cdot a$ by (\ref{resol1}) and Proposition~\ref{NBG} (a). By the Normal Basis Theorem, such an element $a\in F_h$ always exists. So, indeed the map $\delta$ is surjective.

The same argument above also shows that
\begin{equation}\label{global}
\mathcal{H}(FG)=\{\mathbf{r}_G(a)G\mid F_h=FG\cdot a\mbox{ for some }h\in\Hom(\Omega_F,G)\}.
\end{equation}
Similarly, the argument above together with Proposition~\ref{NBG} (b) imply that
\[
\mathcal{H}(\cO_FG)=\{\mathbf{r}_G(a)G\mid\cO_h=\cO_FG\cdot a\mbox{ for some unramified }h\in\Hom(\Omega_F,G)\}.
\]

\begin{definition}\label{ha}Given $\mathbf{r}_G(a)G\in\mathcal{H}(FG)$, define $r_G(a):=\mathbf{r}_G(a)G$, called the \emph{reduced resolvend of $a$}. Moreover, define $h_a\in\Hom(\Omega_F,G)$ by
\[
h_a(\omega):=\mathbf{r}_G(a)^{-1}(\omega\cdot\mathbf{r}_G(a)),
\]
called the \emph{homomorphism associated to $r_G(a)$}. This definition clearly does not depend upon the choice of the representative $\mathbf{r}_G(a)$, and we have $F_h=FG\cdot a$ by (\ref{resol1}) and Proposition~\ref{NBG} (a).
\end{definition}

\begin{definition}\label{rag}Assume that $F$ is a number field. Let $J(\mathcal{H}(FG))$ denote the restricted direct product of the groups $\mathcal{H}(F_vG)$ with respect to the subgroups $\mathcal{H}(\cO_{F_v}G)$ for $v\in M_F$. Moreover, let
\[
\eta:\mathcal{H}(FG)\longrightarrow J(\mathcal{H}(FG))
\]
denote the diagonal map and let
\[
U(\mathcal{H}(\cO_FG)):=\prod_{v\in M_F}\mathcal{H}(\cO_{F_v}G)
\]
be the group of unit ideles. 

Next, recall Definition~\ref{J(FG)} and notice that the homomorphism
\begin{equation}\label{rag1}
\prod_{v\in M_F}rag_{F_v}:J(FG)\longrightarrow J(\mathcal{H}(FG))
\end{equation}
is clearly well-defined, where $rag_{F_v}$ is the map in (\ref{exact rag}). The diagram
\[
\begin{tikzpicture}[baseline=(current bounding box.center)]
\node at (0,2.5) [name=13] {$(FG)^\times$};
\node at (5.5,2.5) [name=14] {$J(FG)$};
\node at (0,0) [name=23] {$\mathcal{H}(FG)$};
\node at (5.5,0) [name=24] {$J(\mathcal{H}(FG))$};
\path[->,font=\large]
(13) edge node[auto]{$\partial$} (14)
(23) edge node[below]{$\eta$} (24)
(14) edge node[right]{$\displaystyle\prod_v rag_{F_v}$} (24)
(13) edge node[left]{$rag_F$} (23);
\end{tikzpicture}
\]
clearly commutes. By abuse of notation, we will also denote the map in (\ref{rag1}) by $rag=rag_F$.
\end{definition}

To interpret reduced resolvends as functions on characters of $G$, first recall that $\widehat{G}$ denotes the group of irreducible $F^c$-valued characters on $G$. Define
\[
\det:\mathbb{Z}\widehat{G}\longrightarrow\widehat{G};
\hspace{1em}\det\left(\sum_\chi n_\chi\chi\right):=\prod_\chi\chi^{n_\chi}
\]
and set\vspace{-1mm}
\begin{equation}\label{SG}
A_{\widehat{G}}:=\ker(\det).
\end{equation}
Applying the functor $\Hom(-,(F^c)^\times)$ to the short exact sequence
\[
\begin{tikzcd}[column sep=1.3cm, row sep=1.5cm]
1 \arrow{r} &
A_{\widehat{G}} \arrow{r} &
\mathbb{Z}\widehat{G}\arrow{r}[font=\normalsize]{\det} &
\widehat{G} \arrow{r}&
1
\end{tikzcd}
\]
then yields the short exact sequence
\begin{equation}\label{exact2}
\begin{tikzcd}[column sep=0.35cm, row sep=1.5cm]
1 \arrow{r} &
\Hom(\widehat{G},(F^{c})^{\times}) \arrow{r} &
\Hom(\mathbb{Z}\widehat{G},(F^{c})^{\times}) \arrow{r}&
\Hom(A_{\widehat{G}},(F^{c})^{\times}) \arrow{r}&
1,
\end{tikzcd}
\end{equation}
where exactness on the right follows from the fact that $(F^c)^\times$ is divisible and hence injective. We will identify (\ref{exact1}) with  (\ref{exact2}) as follows.

First, we have canonical identifications
\[
(F^{c}G)^{\times}=\mbox{Map}(\widehat{G},(F^{c})^{\times})
=\Hom(\mathbb{Z}\widehat{G},(F^c)^\times),
\]
where the second identification is given by extending the maps $\widehat{G}\longrightarrow (F^c)^\times$ via $\mathbb{Z}$-linearity, and the first is induced by characters as follows. Each resolvend $\mathbf{r}_{G}(a)\in (F^cG)^\times$ gives rise to a map $\varphi\in\mbox{Map}(\widehat{G},(F^c)^\times)$ defined by
\begin{equation}\label{resolvent}
\varphi(\chi):=\sum_{s\in G}a(s)\chi(s)^{-1}\hspace{1cm}\mbox{for }\chi\in\widehat{G}.
\end{equation}
Conversely, given $\varphi\in\mbox{Map}(\widehat{G},(F^c)^\times)$, one recovers  $\mathbf{r}_{G}(a)$ by the formula
\begin{equation}\label{Fourier}
a(s):=\frac{1}{|G|}\sum_{\chi}\varphi(\chi)\chi(s)\hspace{1cm}\mbox{for }s\in G.
\end{equation}
Since $G=\Hom(\widehat{G},(F^c)^\times)$ canonically, the third terms
\[
(F^cG)^\times/G=\Hom(A_{\widehat{G}},(F^c)^\times)
\]
in (\ref{exact1}) and (\ref{exact2}), respectively, are naturally identified as well.

Taking $\Omega_F$-invariants, we then obtain the identification
\begin{equation}\label{iden}
\mathcal{H}(FG)=\Hom_{\Omega_F}(A_{\widehat{G}},(F^c)^\times).
\end{equation}
Under this identification, it is clear from (\ref{resolvent}) that
\begin{equation}\label{integral}
\mathcal{H}(\cO_FG)\subset\Hom_{\Omega_F}(A_{\widehat{G}},\cO_{F^c}^\times).
\end{equation}
From (\ref{Fourier}), we easily see that the above inclusion is an equality when $F$ is a finite extension of $\mathbb{Q}_p$ for a prime $p$ not dividing $|G|$.

\subsection{The Stickelberger Transpose}\label{Stickel sec}

Let $F$ be a number field \mbox{or a finite ex-} tension of $\mathbb{Q}_p$ and let $\{\zeta_n:n\in\mathbb{Z}^+\}$ be the chosen compatible set of primitive roots of unity in $F^c$. We will recall the definition of the so-called Stickelberger transpose, which was first introduced by McCulloh in \cite[Section 4]{M} (see \cite[Proposition 5.4]{M} for the motivation of the definition).

\begin{definition}\label{Stickel}For each $\chi\in\widehat{G}$ and $s\in G$, let $\upsilon(\chi,s)\in\{0,1,\dots,|s|-1\}$ be the unique integer such that $\chi(s)=(\zeta_{|s|})^{\upsilon(\chi,s)}$ and define
\[
\langle\chi,s\rangle:=\upsilon(\chi,s)/|s|.
\]
Extending this definition by $\mathbb{Q}$-linearity, we obtain a pairing
\[
\langle\hspace{1mm},\hspace{1mm}\rangle:\mathbb{Q}\widehat{G}\times\mathbb{Q}G\longrightarrow\mathbb{Q},
\]
called the \emph{Stickelberger pairing}. The map
\[
\Theta:\mathbb{Q}\widehat{G}\longrightarrow\mathbb{Q}G;
\hspace{1em}
\Theta(\psi):=\sum_{s\in G}\langle\psi,s\rangle s
\]
is called the \emph{Stickelberger map}.
\end{definition}

\begin{prop}\label{A-ZG}For $\psi\in\mathbb{Z}\widehat{G}$, we have $\Theta(\psi)\in\mathbb{Z}G$ if and only if $\psi\in A_{\widehat{G}}$.
\end{prop}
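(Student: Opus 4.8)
The plan is to compute $\Theta(\psi)$ modulo $\mathbb{Z}G$ via the standard characterization of $A_{\widehat{G}}$ as the kernel of $\det$, and to identify the ``defect'' of $\Theta(\psi)$ from being integral with a coset in $\widehat{G}$. Concretely, I would first reduce to understanding, for each fixed $s \in G$, the quantity $\langle \psi, s\rangle \pmod{\mathbb{Z}}$ as a function of $\psi \in \mathbb{Z}\widehat{G}$. For a single character $\chi$, by Definition~\ref{Stickel} we have $\langle \chi, s\rangle = \upsilon(\chi,s)/|s|$ where $\chi(s) = \zeta_{|s|}^{\upsilon(\chi,s)}$, so $\langle \chi, s \rangle \pmod{\mathbb{Z}}$ depends only on $\chi(s)$, i.e.\ only on the image of $\chi$ under the evaluation-at-$s$ homomorphism $\widehat{G} \to \mu_{|s|} \subset (F^c)^\times$. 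Extending $\mathbb{Z}$-linearly, for $\psi = \sum_\chi n_\chi \chi$ we get $\langle \psi, s\rangle \equiv \sum_\chi n_\chi \langle \chi, s\rangle \pmod{\mathbb{Z}}$, and the key observation is that this class is governed by $\prod_\chi \chi(s)^{n_\chi} = \left(\det(\psi)\right)(s)$, the value at $s$ of the character $\det(\psi) \in \widehat{G}$.

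The main step is then the following bookkeeping identity: writing $\det(\psi)(s) = \zeta_{|s|}^{m}$ for a suitable integer $m$ with $0 \le m < |s|$, one shows $\langle \psi, s\rangle \equiv m/|s| \pmod{\mathbb{Z}}$. This is because the map sending a root of unity $\zeta_{|s|}^j$ (with $0 \le j < |s|$) to $j/|s| \in \mathbb{Q}/\mathbb{Z}$ is a homomorphism $\mu_{|s|} \to \mathbb{Q}/\mathbb{Z}$, so it is compatible with the product $\prod_\chi \chi(s)^{n_\chi}$; the individual terms $\upsilon(\chi,s)/|s|$ add up modulo $\mathbb{Z}$ to exactly the value of this homomorphism on $\det(\psi)(s)$. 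Consequently $\Theta(\psi) = \sum_{s} \langle \psi, s\rangle s \in \mathbb{Z}G$ if and only if $\langle\psi,s\rangle \in \mathbb{Z}$ for every $s \in G$, if and only if $\det(\psi)(s) = 1$ for every $s \in G$. Since the characters of $G$ separate points (equivalently, $G = \Hom(\widehat{G},(F^c)^\times)$ canonically, as recorded in Subsection~\ref{reduced}), the condition ``$\det(\psi)(s) = 1$ for all $s$'' is equivalent to $\det(\psi) = 1$ in $\widehat{G}$, which by the definition (\ref{SG}) of $A_{\widehat{G}}$ as $\ker(\det)$ is exactly the statement $\psi \in A_{\widehat{G}}$.

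One direction is essentially immediate: if $\psi \in A_{\widehat{G}}$, then $\det(\psi) = 1$, so $\det(\psi)(s) = 1$ and hence $\langle \psi, s\rangle \in \mathbb{Z}$ for all $s$, giving $\Theta(\psi) \in \mathbb{Z}G$. The converse is where care is needed: from $\Theta(\psi) \in \mathbb{Z}G$ we extract $\langle \psi, s\rangle \in \mathbb{Z}$ for each $s$ (the coefficients of $s$ in $\Theta(\psi)$ are precisely the $\langle\psi,s\rangle$), then run the homomorphism argument above backwards to conclude $\det(\psi)(s) = 1$ for all $s$, and finally invoke separation of points. I expect the only real subtlety — the ``hard part,'' such as it is — to be the clean verification that $\langle \psi, s\rangle \bmod \mathbb{Z}$ is computed by a genuine group homomorphism $\mu_{|s|} \to \mathbb{Q}/\mathbb{Z}$ and that this is compatible with the $\mathbb{Z}$-linear extension defining the Stickelberger pairing; once that compatibility is in hand, everything else is formal. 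Note this argument uses nothing about $F$ beyond the chosen compatible system of roots of unity, consistent with the hypotheses in force in this subsection.
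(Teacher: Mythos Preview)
Your argument is correct and is essentially the standard proof: the paper itself does not prove this proposition but simply cites \cite[Proposition 4.3]{M}, and McCulloh's argument there is exactly the one you give, namely that for $\psi=\sum_\chi n_\chi\chi$ one has $\langle\psi,s\rangle=\big(\sum_\chi n_\chi\upsilon(\chi,s)\big)/|s|$, so $\langle\psi,s\rangle\in\mathbb{Z}$ if and only if $\det(\psi)(s)=\zeta_{|s|}^{\sum_\chi n_\chi\upsilon(\chi,s)}=1$. One small remark: your appeal to ``characters separate points'' at the end is unnecessary, since $\det(\psi)$ is itself an element of $\widehat{G}$ and the condition $\det(\psi)(s)=1$ for all $s\in G$ is literally the statement that $\det(\psi)$ is the trivial character.
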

\begin{proof}See \cite[Proposition 4.3]{M}.
\end{proof}

Up until now, we have let $\Omega_F$ act trivially on $G$. Below, we introduce other $\Omega_F$-actions on $G$, one of which will make the $\mathbb{Q}$-linear map $\Theta:\mathbb{Q}\widehat{G}\longrightarrow\mathbb{Q}G$ preserve the $\Omega_F$-action. Here, the $\Omega_F$-action on $\widehat{G}$ is the canonical one induced by the $\Omega_F$-action on the roots of unity in $F^c$.

\begin{definition}\label{cyclotomic}
Let $m:=\exp(G)$ and let $\mu_m$ be the group of $m$-th roots of unity in $F^c$. The \emph{$m$-th cyclotomic character of $\Omega_F$} is the homomorphism
\[
\kappa:\Omega_F\longrightarrow(\mathbb{Z}/m\mathbb{Z})^{\times}
\]
defined by the equations
\[
\omega(\zeta)=\zeta^{\kappa(\omega)}\hspace{1cm}\mbox{for $\omega\in\Omega_F$ and }\zeta\in\mu_m.
\]
For $n\in\mathbb{Z}$, let $G(n)$ be the group $G$ equipped with the $\Omega_F$-action given by
\[
\omega\cdot s:=s^{\kappa(\omega^{n})}\hspace{1cm}\mbox{for $s\in G$ and $\omega\in\Omega_F$}.
\]
\end{definition}

We will need $G(-1)$. Of course, if $F$ contains all $\exp(G)$-th roots of unity, then $\kappa$ is trivial and $\Omega_F$ acts trivially on $G(n)=G(0)$ for all $n\in\mathbb{Z}$.

\begin{prop}\label{eqvariant}
The map $\Theta:\mathbb{Q}\widehat{G}\longrightarrow\mathbb{Q}G(-1)$ preserves the $\Omega_F$-action.
\end{prop}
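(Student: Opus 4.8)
The plan is to verify directly from the definitions that the $\mathbb{Q}$-linear map $\Theta$ intertwines the two $\Omega_F$-actions; that is, to check $\Theta(\omega\cdot\psi)=\omega\cdot\Theta(\psi)$ for all $\omega\in\Omega_F$ and all $\psi\in\mathbb{Q}\widehat{G}$, where on the left the action is the canonical one on $\widehat{G}$ (extended $\mathbb{Q}$-linearly) and on the right the action is the $G(-1)$-action from Definition~\ref{cyclotomic}. By $\mathbb{Q}$-linearity it suffices to treat $\psi=\chi$ for a single character $\chi\in\widehat{G}$, and by the definition of $\Theta$ this reduces to comparing coefficients of each $s\in G$, i.e.\ to the scalar identity
\[
\langle\omega\cdot\chi,s\rangle=\langle\chi,\omega^{-1}\cdot s\rangle,
\]
where $\omega^{-1}\cdot s=s^{\kappa(\omega^{-1})}=s^{\kappa(\omega)^{-1}}$ is the $G(-1)$-action. (The reindexing $\Theta(\chi)=\sum_s\langle\chi,s\rangle s$ becomes $\sum_s\langle\chi,\omega^{-1}\cdot s\rangle(\omega\cdot s)$ after the substitution $s\mapsto\omega\cdot s$, which is a bijection of $G$, so the identity above is exactly what is needed.)

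To prove this scalar identity I would unwind Definition~\ref{Stickel}. Fix $s\in G$, write $n=|s|$, and let $\upsilon=\upsilon(\chi,s)\in\{0,\dots,n-1\}$ be the integer with $\chi(s)=(\zeta_n)^{\upsilon}$. Applying $\omega$ to both sides and using that $\omega$ acts on $\mu_n\subset\mu_m$ via the cyclotomic character, $\omega(\zeta_n)=\zeta_n^{\kappa(\omega)}$ (note $\zeta_n$ has order dividing $m$ since $n\mid\exp(G)$, and the chosen roots of unity are compatible), one gets $(\omega\cdot\chi)(s)=\omega(\chi(s))=(\zeta_n)^{\kappa(\omega)\upsilon}$. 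On the other side, $\chi(s^{\kappa(\omega)^{-1}})=\chi(s)^{\kappa(\omega)^{-1}}=(\zeta_n)^{\kappa(\omega)^{-1}\upsilon}$; here $\kappa(\omega)^{-1}$ is to be read modulo $n$, which is legitimate because $|s|=n$ divides $m$ so $(\mathbb{Z}/m\mathbb{Z})^\times$ surjects onto $(\mathbb{Z}/n\mathbb{Z})^\times$ compatibly. Thus $\upsilon(\omega\cdot\chi,s)\equiv\kappa(\omega)\upsilon\pmod n$ and $\upsilon(\chi,\omega^{-1}\cdot s)\equiv\kappa(\omega)^{-1}\upsilon\pmod n$. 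Dividing by $n$, $\langle\omega\cdot\chi,s\rangle\equiv\langle\chi,s\rangle$ times $\kappa(\omega)$ modulo $1$ and $\langle\chi,\omega^{-1}\cdot s\rangle$ is $\langle\chi,s\rangle$ times $\kappa(\omega)^{-1}$ modulo $1$ — but these are not literally equal as rationals, so the naive coefficient-by-coefficient comparison must be handled by instead summing over $s$. The cleanest route is therefore: substitute $s\mapsto\omega\cdot s$ in $\sum_s\langle\chi,s\rangle s$ to get $\omega\cdot\Theta(\chi)=\sum_s\langle\chi,\omega^{-1}\cdot s\rangle\,s$, and separately compute $\Theta(\omega\cdot\chi)=\sum_s\langle\omega\cdot\chi,s\rangle\,s$, then match the coefficient of each fixed $s$ using $\langle\omega\cdot\chi,s\rangle=\langle\chi,\omega^{-1}\cdot s\rangle$, which is now a genuine equality of rationals because both equal $\upsilon/n$ for the \emph{same} representative $\upsilon\in\{0,\dots,n-1\}$: the integer $\upsilon(\omega\cdot\chi,s)$ is by definition the unique representative in $\{0,\dots,n-1\}$ of $\kappa(\omega)\upsilon(\chi,s)$, and $\upsilon(\chi,\omega^{-1}\cdot s)$ is the unique representative of $\kappa(\omega)^{-1}\cdot\upsilon(\chi, \omega^{-1}\cdot (\omega\cdot s))$ — wait, one must be careful that $|\omega^{-1}\cdot s|=|s|$ since $\omega^{-1}\cdot s=s^{\kappa(\omega)^{-1}}$ is a generator of $\langle s\rangle$. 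Granting this, both sides reduce to the same canonical representative and the identity holds on the nose.

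The only genuine subtlety — and the step I expect to need the most care — is the bookkeeping of representatives modulo $n=|s|$ versus modulo $m=\exp(G)$, together with the claim that $\omega\mapsto s^{\kappa(\omega)}$ is a well-defined action of $\Omega_F$ on $\langle s\rangle$ preserving orders. I would dispatch this by noting $n\mid m$, that $\kappa$ reduces to the $n$-th cyclotomic character on $\Omega_F$, that $\kappa(\omega)$ is a unit mod $n$ hence $s\mapsto s^{\kappa(\omega)}$ permutes the generators of $\langle s\rangle$ (so orders are preserved), and that $\omega(\zeta_n)=\zeta_n^{\kappa(\omega)}$ with the \emph{same} exponent read mod $n$. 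Everything else is a direct, short computation: linearity reduces to a single $\chi$, the defining formula for $\Theta$ reduces to coefficients, the substitution $s\mapsto\omega\cdot s$ on the right is a bijection of $G$, and the coefficient identity $\langle\omega\cdot\chi,s\rangle=\langle\chi,\omega^{-1}\cdot s\rangle$ follows from $(\omega\cdot\chi)(s)=\chi(s)^{\kappa(\omega)}=\chi(s^{\kappa(\omega)})=\chi(\omega^{-1}\cdot s)$ after checking the $\{0,\dots,n-1\}$-normalization matches on both sides. I would remark at the end that when $F$ contains all $\exp(G)$-th roots of unity the statement is trivial since $\kappa\equiv 1$ and both actions are trivial, consistent with the comment preceding the proposition.
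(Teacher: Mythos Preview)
The paper does not prove this proposition; it simply cites \cite[Proposition 4.5]{M}. Your direct verification is the standard argument and is essentially what McCulloh does, so the approach is fine.

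There is, however, a sign slip that causes most of the meandering in your write-up. In $G(-1)$ the action is $\omega\cdot s=s^{\kappa(\omega^{-1})}$, so
\[
\omega^{-1}\cdot s \;=\; s^{\kappa((\omega^{-1})^{-1})}\;=\;s^{\kappa(\omega)},
\]
not $s^{\kappa(\omega)^{-1}}$ as you write in the first paragraph. This is why your second-paragraph computation produces the mismatched residues $\kappa(\omega)\upsilon$ and $\kappa(\omega)^{-1}\upsilon$ and sends you into the ``wait'' detours. With the correct formula the argument is immediate: since $\kappa(\omega)$ is a unit modulo $n=|s|$, the element $\omega^{-1}\cdot s=s^{\kappa(\omega)}$ has order $n$, and
\[
(\omega\cdot\chi)(s)=\omega(\chi(s))=\zeta_n^{\kappa(\omega)\upsilon(\chi,s)}=\chi\bigl(s^{\kappa(\omega)}\bigr)=\chi(\omega^{-1}\cdot s),
\]
so $\upsilon(\omega\cdot\chi,s)$ and $\upsilon(\chi,\omega^{-1}\cdot s)$ are both the unique representative of $\kappa(\omega)\upsilon(\chi,s)$ in $\{0,\dots,n-1\}$, giving $\langle\omega\cdot\chi,s\rangle=\langle\chi,\omega^{-1}\cdot s\rangle$ on the nose. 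Your final paragraph in fact lands on exactly this chain of equalities, so the proof is correct once you fix the earlier sign and delete the false starts.
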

\begin{proof}See \cite[Proposition 4.5]{M}.
\end{proof}

From Propositions~\ref{A-ZG} and \ref{eqvariant}, we obtain an $\Omega_F$-equivariant map
\[
\Theta:A_{\widehat{G}}\longrightarrow\mathbb{Z}G(-1),
\]
which in turn yields an $\Omega_F$-equivariant homomorphism
\[
\Theta^{t}:\Hom(\mathbb{Z}G(-1),(F^{c})^{\times})\longrightarrow\Hom(A_{\widehat{G}},(F^{c})^{\times});\hspace{1em}f\mapsto f\circ\Theta.
\]
Via restriction, we then obtain a homomorphism
\[
\Theta^t=\Theta^t_{F}:\Hom_{\Omega_F}(\mathbb{Z}G(-1),(F^{c})^{\times})\longrightarrow\Hom_{\Omega_F}(A_{\widehat{G}},(F^{c})^{\times}),
\]
called the \emph{modified Stickelberger transpose}. 

Notice that we have a natural identification
\[
\Hom_{\Omega_F}(\mathbb{Z}G(-1),(F^c)^\times)
=\mbox{Map}_{\Omega_F}(G(-1),(F^c)^\times).
\]
To simplify notation, let
\begin{align}
\label{lambda1}\Lambda(FG)&:=\mbox{Map}_{\Omega_F}(G(-1),F^c);\\\notag
\Lambda(\cO_FG)&:=\mbox{Map}_{\Omega_F}(G(-1),\cO_{F^c}).
\end{align}
Then, we may regard $\Theta^{t}$ as a map 
\[
\Theta^t:\Lambda(FG)^{\times}\longrightarrow\mathcal{H}(FG)
\]
(recall the identification (\ref{iden})).

\begin{definition}\label{Theta}Assume that $F$ is a number field. Let $J(\Lambda(FG))$ denote the restricted direct product of the groups $\Lambda(F_vG)^\times$ with respect to the subgroups $\Lambda(\cO_{F_v}G)^\times$ for $v\in M_F$. Moreover, let
\[
\lambda=\lambda_F:\Lambda(FG)^\times\longrightarrow J(\Lambda(FG))
\]
denote the diagonal map and let
\[
U(\Lambda(\cO_FG)):=\prod_{v\in M_F}\Lambda(\cO_{F_v}G)^\times
\]
be the group of unit ideles.

Next, recall Definition~\ref{rag} and observe that the homomorphism
\begin{equation}\label{Theta1}
\prod_{v\in M_F}\Theta^t_{F_v}:J(\Lambda(FG))\longrightarrow J(\mathcal{H}(FG))
\end{equation}
is well-defined because the inclusion in (\ref{integral}) is an equality for all but finitely many $v\in M_F$. Recall from Section~\ref{notation} that we chose $\{i_v(\zeta_n):n\in\mathbb{Z}^+\}$ to be the compatible set of primitive roots of unity in $F_v^c$. Hence, the diagram
\begin{equation}\label{Theta'}
\begin{tikzpicture}[baseline=(current bounding box.center)]
\node at (0,2.5) [name=13] {$\Lambda(FG)^\times$};
\node at (5.5,2.5) [name=14] {$J(\Lambda(FG))$};
\node at (0,0) [name=23] {$\mathcal{H}(FG)$};
\node at (5.5,0) [name=24] {$J(\mathcal{H}(FG))$};
\path[->, font=\large]
(13) edge node[auto]{$\lambda$} (14)
(23) edge node[below]{$\eta$} (24)
(14) edge node[right]{$\displaystyle\prod_v\Theta^t_{F_v}$} (24)
(13) edge node[left]{$\Theta^t_{F}$} (23);
\end{tikzpicture}\vspace{-0.5mm}
\end{equation}
commutes. By abuse of notation, we will also use $\Theta^t=\Theta^t_{F}$ to denote the map in (\ref{Theta1}).
\end{definition}

\subsection{Local Prime $\mathfrak{F}$-Elements}\label{local F sec} Let $F$ be a finite extension of $\mathbb{Q}_p$. In what follows, let  $\pi=\pi_F$ denote a chosen uniformizer in $F$ and write $q=q_F$ for the order of the residue field $\cO_F/(\pi_F)$. Let $\{\zeta_n:n\in\mathbb{Z}^+\}$ denote the chosen compatible set of primitive roots of unity in $F^c$. Also, let $F^{nr}$ denote the maximal unramified extension of $F$ contained in $F^c$ and set $\Omega_F^{nr}:=\Gal(F^{nr}/F)$.

Recall from Remark~\ref{tamesubgp} that a tame $h\in\Hom(\Omega_F,G)$ may be regarded as an element in $\Hom(\Omega_F^t,G)$. Such a homomorphism admits a factorization as follows. First, we will recall the structure of the group $\Omega_F^t$ (see \cite[Sections 7 and 8]{F-CF}, for example). On one hand, the field $F^{nr}$ is obtained by adjoining to $F$ all roots of unity of order coprime to $p$. This means that $\Omega_F^{nr}$ is a procyclic group topologically generated by the Frobenius automorphism $\phi_F=\phi$, which is given by
\[
\phi(\zeta_n)=\zeta_n^q\hspace{1cm}\mbox{for all }(n,p)=1.
\]
On the other hand, the field $F^t$ is obtained by adjoining to $F^{nr}$ the $n$-th roots of $\pi$ for all $n$ coprime to $p$. We will choose a coherent set of radicals 
\[
\{\pi^{1/n}:n\in\mathbb{Z}^+\}
\] 
such that $(\pi^{1/mn})^{n}=\pi^{1/m}$ and then define $\pi^{m/n}:=(\pi^{1/n})^{m}$ for all $m,n\in\mathbb{Z}^+$. These choices of radicals then determine a distinguished topological generator $\sigma=\sigma_F$ of the procyclic group $\Gal(F^{t}/F^{nr})$, which is given by
\begin{equation}\label{sigma}
\sigma(\pi^{1/n})=\zeta_n\pi^{1/n}\hspace{1cm}\mbox{for all }(n,p)=1.
\end{equation}
Letting $\phi$ also denote the lifting of $\phi$ in $\Omega_F^{t}$ that fixes the radicals $\pi^{1/n}$ for all \par\noindent $(n,p)=1$, we then see that $\Omega_F^{t}$ is topologically generated by $\phi$ and $\sigma$. Notice that we have the relation $\phi\sigma\phi^{-1}\sigma^{-1}=\sigma^{q-1}$, and define
\[
G_{(q-1)}:=\{s\in G\mid\mbox{the order of $s$ divides $q-1$}\}.
\]
Since $G$ is abelian, for any $h\in\Hom(\Omega_F^t,G)$, we must have $h(\sigma)\in G_{(q-1)}$.

\begin{definition}\label{factorh}
Given $h\in\Hom(\Omega_F^t,G)$, define
\begin{align*}
h^{nr}\in\Hom(\Omega^t_F,G);&\hspace{1em}h^{nr}(\phi):=h(\phi)\mbox{ and }h^{nr}(\sigma):=1;\\
h^{tot}\in\Hom(\Omega^t_F,G);&\hspace{1em}h^{tot}(\phi):=1\mbox{ and }h^{tot}(\sigma):=h(\sigma).
\end{align*}
Clearly $h=h^{nr}h^{tot}$, which we will call the \emph{factorization of $h$ with respect to $\sigma$}. Note that $h^{nr}$ is unramified, and we have $F^{h^{tot}}=F(\pi^{1/|s|})$, where $s=h^{tot}(\sigma)$, by \cite[Proposition 5.4]{M}.
\end{definition}

Now, let $h\in\Hom(\Omega_F^t,G)$ and suppose that $\cO_h=\cO_FG\cdot a$. As in the proof of \cite[Theorem 5.6]{M}, we may decompose the resolvend $\mathbf{r}_G(a)$ as
\[
\mathbf{r}_G(a)=\mathbf{r}_G(a_{nr})\mathbf{r}_G(a_{tot}),
\]
where $\cO_{h^{nr}}=\cO_FG\cdot a_{nr}$ and $\cO_{h^{tot}}=\cO_FG\cdot a_{tot}$. The above is the same decomposition mentioned in (\ref{eq2}). The resolvend $\mathbf{r}_G(a_{nr})$ is \mbox{already characterized by} Proposition~\ref{NBG} (b). In \cite[Proposition 5.4]{M}, McCulloh showed that the element $a_{tot}\in\cO_{h^{tot}}$ may be chosen such that its reduced resolvend $r_G(a_{tot})$ is equal to $\Theta^t(f_s)$ for some $f_s\in\Lambda(FG)^\times$ (recall (\ref{lambda1})) \mbox{that is uniquely deter-} mined by the value $s:=h(\sigma)$.

\begin{definition}\label{primeFlocal}
Given $s\in G_{(q-1)}$, define $f_s=f_{F,s}\in\Lambda(FG)^\times$ by
\[
f_s(t):=\begin{cases}
\pi & \mbox{if }t=s\neq1\\
1 & \mbox{otherwise}.
\end{cases}
\]
Notice that $f_{s}$ indeed preserves the $\Omega_F$-action because all $(q-1)$-st roots of unity are contained in $F$, so elements in $G_{(q-1)}$ are fixed by $\Omega_F$, as is $\pi$. Such a map in $\Lambda(FG)^\times$ is called a \emph{prime $\mathfrak{F}$-element over $F$}. We will write
\[
\mathfrak{F}_F:=\{f_{s}:s\in G_{(q-1)}\}
\]
for the collection of all of prime $\mathfrak{F}$-elements over $F$.
\end{definition}

\subsection{Approximation Theorems} Let $F$ be a number field and define
\begin{equation}\label{primeF}
\mathfrak{F}=\mathfrak{F}_F:=\{f\in J(\Lambda(FG))\mid f_v\in\mathfrak{F}_{F_v}\mbox{ for all }v\in M_F\}.
\end{equation}
We are now ready to state the characterization of the set
\[
R(\cO_FG):=\{\cl(\cO_h):\mbox{tame }h\in\Hom(\Omega_F,G)\}
\]
of all realizable classes in $\mbox{Cl}(\cO_FG)$ that was proved by McCulloh in \mbox{\cite{M}. It will} be helpful to recall (\ref{j}) and Definition~\ref{rag}.

\begin{thm}\label{char1}Let $h\in\Hom(\Omega_F,G)$, say $F_h=FG\cdot b$. Then, we have $h$ is tame if and only if there exists $c\in J(FG)$ such that
\begin{equation}\label{char1'}
rag(c)=\eta(r_G(b))^{-1}u\Theta^t(f)
\end{equation}
for some $u\in U(\mathcal{H}(\cO_FG))$ and $f\in\mathfrak{F}$. Moreover, if (\ref{char1'}) holds, then
\begin{enumerate}[(1)]
\item for all $v\in M_F$, we have $f_v=f_{F_v,s_v}$ for $s_v:=h_v(\sigma_{F_v})$;
\item for all $v\in M_F$, we have $f_v=1$ if and only if $h_v$ is unramified;
\item $j(c)=\cl(\cO_h)$.
\end{enumerate}
\end{thm}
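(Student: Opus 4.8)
The plan is to prove Theorem~\ref{char1} by reducing the global statement to the local analysis carried out in Subsections~\ref{local F sec} and~\ref{reduced}, exploiting the idelic description of $\cl(\cO_h)$ obtained in the preamble to this section (equations (\ref{av})--(\ref{eq1})). First I would recall that, by the discussion around (\ref{cv})--(\ref{eq1}), if $F_h=FG\cdot b$ then $\cl(\cO_h)=j(c)$ for any $c=(c_v)\in J(FG)$ satisfying $\mathbf{r}_G(a_v)=c_v\cdot\mathbf{r}_G(b)$ with $\cO_{h_v}=\cO_{F_v}G\cdot a_v$. Passing to cosets modulo $G$ and using the commuting square in Definition~\ref{rag}, this equation becomes $rag(c)=\eta(r_G(b))^{-1}\cdot(r_G(a_v))_v$, so the whole content of the theorem is the identification, for each $v$, of the reduced resolvend $r_G(a_v)$ of a normal integral basis generator of $\cO_{h_v}$ over $\cO_{F_v}G$.

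Next I would treat the local picture one place at a time. Fix $v\in M_F$ and regard $h_v\in\Hom(\Omega_{F_v}^t,G)$ via Remark~\ref{tamesubgp}; tameness of $h$ is equivalent to tameness of every $h_v$, i.e.\ to every $h_v$ factoring through $\Omega_{F_v}^t$. Write the factorization $h_v=h_v^{nr}h_v^{tot}$ of Definition~\ref{factorh} and the corresponding decomposition $\mathbf{r}_G(a_v)=\mathbf{r}_G(a_{v,nr})\mathbf{r}_G(a_{v,tot})$ from (\ref{eq2}), so $r_G(a_v)=r_G(a_{v,nr})\,r_G(a_{v,tot})$. The first factor lies in $\mathcal{H}(\cO_{F_v}G)=U(\mathcal{H}(\cO_FG))_v$ by Proposition~\ref{NBG}(b) (this is the "$u$" in (\ref{char1'})); for the totally ramified factor, by \cite[Proposition~5.4]{M} one may choose $a_{v,tot}$ so that $r_G(a_{v,tot})=\Theta^t_{F_v}(f_{s_v})$ with $s_v:=h_v(\sigma_{F_v})\in G_{(q_v-1)}$ and $f_{s_v}$ the prime $\mathfrak{F}$-element of Definition~\ref{primeFlocal}. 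Assembling over all $v$ gives $f:=(f_{s_v})_v\in\mathfrak{F}$ and $u:=(r_G(a_{v,nr}))_v\in U(\mathcal{H}(\cO_FG))$, yielding (\ref{char1'}) together with conclusions (1), (2) (note $f_v=1\iff s_v=1\iff h_v^{tot}$ trivial $\iff h_v$ unramified), and (3) (which is just $\cl(\cO_h)=j(c)$, already established). For the converse, given $c\in J(FG)$ with $rag(c)=\eta(r_G(b))^{-1}u\Theta^t(f)$, I would read the equation at each $v$: it forces $r_G(c_v\cdot b)=u_v\Theta^t_{F_v}(f_v)$, and running the local construction backwards (again via \cite[Proposition~5.4]{M}) produces a tame $h_v$ with the prescribed ramification data; since $h_v=h_v$ for the global $h$ determined by $b$ and $c$, one concludes $h$ is tame.

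The main obstacle, I expect, is the bookkeeping at the finitely many "bad" primes and the comparison of local and global data. Concretely: (i) for $v$ not dividing $|G|$ and at which $h$ is unramified, one must check $c_v\in(\cO_{F_v}G)^\times$ so that $c$ genuinely lies in the restricted product $J(FG)$ and $u_v$ genuinely lies in $\mathcal{H}(\cO_{F_v}G)$ — this uses that the inclusion (\ref{integral}) is an equality there; (ii) the compatibility of the chosen roots of unity $i_v(\zeta_n)$ with the global $\zeta_n$ (fixed in Section~\ref{notation}) is exactly what makes $\prod_v\Theta^t_{F_v}$ the localization of a single map and makes the square (\ref{Theta'}) commute, so that the factor $\Theta^t(f)$ on the right of (\ref{char1'}) is well-defined idelically; (iii) one must verify that the choice of $b$ only changes $c$ by the diagonal image $\partial((FG)^\times)$, so that $j(c)$ and hence $\cl(\cO_h)$ is well-defined. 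Most of these are routine given the framework already set up, but (ii) in particular is where the seemingly pedantic conventions of Section~\ref{notation} pay off and must be invoked explicitly. Apart from that, the argument is essentially a transcription of \cite[Theorem~6.17]{M} (or rather its local input \cite[Theorem~5.6]{M}) into the present notation, so I would cite McCulloh for the delicate local existence claim in Proposition~5.4 rather than reproving it.
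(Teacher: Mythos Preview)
Your sketch is essentially correct and follows precisely the strategy the paper lays out in the preamble to Section~\ref{R} (equations (\ref{av})--(\ref{eq2})); the paper itself does not give an independent proof at all but simply cites \cite[Theorem~6.7]{M}. So you are in fact supplying more detail than the paper does, and your local-to-global reduction via the factorization $h_v=h_v^{nr}h_v^{tot}$ together with \cite[Proposition~5.4]{M} is exactly the content of McCulloh's argument.

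Two small corrections. First, the relevant reference is \cite[Theorem~6.7]{M}, not Theorem~6.17; the latter is the subsequent result (mentioned after (\ref{char2})) replacing $\mathfrak{F}$ by $J(\Lambda(FG))$. Second, in your converse direction you write ``the global $h$ determined by $b$ and $c$'': in fact $h$ is determined by $b$ alone, since $F_h=FG\cdot b$ forces $h=h_b$ (Definition~\ref{ha}). The idele $c$ plays no role in recovering $h$; rather, the local equation $r_G(c_v\cdot b)=u_v\Theta^t_{F_v}(f_v)$ shows that the homomorphism associated to $c_v\cdot b$ is tame, and since $c_v\in(F_vG)^\times$ this homomorphism coincides with $(h_b)_v=h_v$. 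With that adjustment your converse is fine.
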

\begin{proof}See \cite[Theorem 6.7]{M}.
\end{proof}

\begin{remark}The decomposition of $rag(c)$ given by (\ref{char1'}) in Theorem~\ref{char1} comes from equation (\ref{eq1}) and the decomposition (\ref{eq2}).
\end{remark}

Theorem~\ref{char1} implies that the set $R(\cO_FG)$ may be characterized as follows. For $c\in J(FG)$, we have $j(c)\in R(\cO_FG)$ if and only if $rag(c)$ is an element of 
\begin{equation}\label{char2}
\eta(\mathcal{H}(FG))U(\mathcal{H}(\cO_FG))\Theta^t(\mathfrak{F}).
\end{equation}
Using two approximations theorems, McCulloh showed in \cite[Theorem 6.17]{M} that the set $\mathfrak{F}$ may be replaced by the group $J(\Lambda(FG))$ in the above. This in turns shows that $R(\cO_FG)$ is a subgroup of $\mbox{Cl}(\cO_FG)$ (see \cite[Corollary 6.20]{M}).

Below, we will state the two approximation theorems, which will be needed in the proof of Theorem~\ref{thm} (a). We will need the following notation. It will be helpful to recall (\ref{lambda1}) and Definition~\ref{cyclotomic}.

\begin{definition}
Let $\mathfrak{m}$ be an ideal in $\cO_F$. For each $v\in M_F$, let
\begin{align*}
U_{\mathfrak{m}}(\cO_{F_v^c})&:=(1+\mathfrak{m}\cO_{F_v^c})\cap(\cO_{F_v^c})^{\times};\\
U'_{\mathfrak{m}}(\Lambda(\cO_{F_v}G))&:=\{g_v\in\Lambda(\cO_{F_v}G)^\times\mid g_v(s)\in U_\mathfrak{m}(\cO_{F_v^c})\mbox{ for all }s\in G\mbox{ with }s\neq 1\}.
\end{align*}
Define
\[
U'_\mathfrak{m}(\Lambda(\cO_FG))
:=\Bigg(\prod_{v\in M_F}U_\mathfrak{m}'(\Lambda(\cO_{F_v}G))\Bigg)\cap J(\Lambda(FG)).
\]
\end{definition}

\begin{definition}\label{gs}
For $g\in J(\Lambda(FG))$ and $s\in G$, define
\[
g_s:=\prod_{v\in M_F}g_v(s)\in\prod_{v\in M_F}(F_v^c)^\times.
\]
\end{definition}

\begin{thm}\label{approx1}Let $\mathfrak{m}$ be an ideal in $\cO_F$ divisible by both $|G|$ and $\exp(G)^2$. Then, we have $\Theta^t(U_\mathfrak{m}'(\Lambda(\cO_FG))\subset U(\mathcal{H}(\cO_FG))$.
\end{thm}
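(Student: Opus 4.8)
\textbf{Proof plan for Theorem~\ref{approx1}.}
The statement is local at each prime, so the plan is to reduce to showing that for each $v \in M_F$ and each $g_v \in U'_{\mathfrak{m}}(\Lambda(\cO_{F_v}G))$ we have $\Theta^t_{F_v}(g_v) \in \mathcal{H}(\cO_{F_v}G)$; the restricted-product condition is then automatic because $g_v \in \Lambda(\cO_{F_v}G)^\times$ for all but finitely many $v$, and for those $v$ the prime $p$ above which $v$ lies does not divide $|G|$, so the inclusion in (\ref{integral}) is an equality and $\Theta^t_{F_v}$ already lands in $\mathcal{H}(\cO_{F_v}G)$. So fix $v$ and write $F = F_v$, $p$ for the residue characteristic, and recall from (\ref{integral}) that $\mathcal{H}(\cO_FG) \subset \Hom_{\Omega_F}(A_{\widehat G}, \cO_{F^c}^\times)$, with equality when $p \nmid |G|$.

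First I would unwind the definition: for $g \in U'_{\mathfrak{m}}(\Lambda(\cO_FG))$, the element $\Theta^t(g)$ is the homomorphism $A_{\widehat G} \to (F^c)^\times$ sending $\psi \mapsto g(\Theta(\psi))$, where $g$ is extended $\mathbb{Z}$-multiplicatively to $\mathbb{Z}G(-1)$ via $g(\sum_s n_s s) = \prod_s g(s)^{n_s}$. Since $g(s) \in \cO_{F^c}^\times$ for every $s$ (including $s = 1$, where $g(1) \in \cO_{F^c}^\times$ automatically as $g$ is a unit and $\Omega_F$-fixed), the value $g(\Theta(\psi))$ is visibly a unit in $\cO_{F^c}$; so $\Theta^t(g)$ always lies in $\Hom_{\Omega_F}(A_{\widehat G}, \cO_{F^c}^\times)$. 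Hence when $p \nmid |G|$ there is nothing more to prove. The substance is the case $p \mid |G|$, where $\mathcal{H}(\cO_FG)$ can be a proper subgroup of $\Hom_{\Omega_F}(A_{\widehat G}, \cO_{F^c}^\times)$, and one must show that the particular elements $\Theta^t(g)$ with $g \in U'_{\mathfrak{m}}$ do lift to $(\cO_{F^c}G)^\times$, i.e.\ come from some reduced resolvend $r_G(a)$ with $\cO_{h_a} = \cO_FG \cdot a$ and $h_a$ unramified.

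The key step is to produce such a lift explicitly via the Fourier inversion formula (\ref{Fourier}). Given the map $\varphi := \Theta^t(g) \in \Hom_{\Omega_F}(A_{\widehat G}, \cO_{F^c}^\times)$, extend it (non-canonically) to a map $\widetilde\varphi \in \Hom(\mathbb{Z}\widehat G, (F^c)^\times)$ — possible by (\ref{exact2}) — and set $a(s) := \frac{1}{|G|}\sum_\chi \widetilde\varphi(\chi)\chi(s)$, so $\mathbf{r}_G(a)$ is the corresponding resolvend with $\mathbf{r}_G(a)G = \varphi$ under (\ref{iden}). What must be shown is (i) that $\widetilde\varphi$ can be chosen so that $\mathbf{r}_G(a) \in (\cO_{F^c}G)^\times$, equivalently that $a(s) \in \cO_{F^c}$ for all $s$ and the resolvend is a unit, and (ii) that the associated homomorphism $h_a$ is unramified. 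For (i), the hypothesis $\mathfrak{m}$ divisible by $|G|$ clears the denominator $\tfrac{1}{|G|}$ appearing in (\ref{Fourier}) once one knows $\widetilde\varphi(\chi) \equiv 1 \pmod{\mathfrak{m}}$ for $\chi \ne 1$ — which is exactly what the condition $g(s) \in U_\mathfrak{m}(\cO_{F^c})$ for $s \ne 1$ buys after running it through $\Theta$, modulo controlling the fractional exponents $\langle \psi, s\rangle$; this is where the extra divisibility by $\exp(G)^2$ enters, to absorb the denominators $|s|$ in the Stickelberger pairing when passing from $A_{\widehat G}$ to all of $\mathbb{Z}\widehat G$ and to ensure a congruence-preserving choice of the extension $\widetilde\varphi$. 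For (ii), $h_a(\omega) = \mathbf{r}_G(a)^{-1}(\omega \cdot \mathbf{r}_G(a))$ is detected by the action of inertia; the congruence $\mathbf{r}_G(a) \equiv 1 \pmod{\mathfrak{m}}$ modulo $G$ forces $h_a$ to be trivial on wild inertia and, together with the tameness built into $\Lambda(FG) = \Map_{\Omega_F}(G(-1), F^c)$, on all of inertia, giving unramifiedness.

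The main obstacle I expect is precisely the bookkeeping in step (i)–(ii): tracking how the congruence condition $g \in U'_{\mathfrak{m}}$ on the values $g(s)$ propagates through the $\mathbb{Z}$-multiplicative extension to $\mathbb{Z}G(-1)$, through composition with $\Theta$ (which introduces the rational coefficients $\langle\psi,s\rangle$ with denominators dividing $\exp(G)$), and through a choice of set-theoretic (or rather multiplicative) splitting $\Hom(\mathbb{Z}\widehat G, (F^c)^\times) \to \Hom(A_{\widehat G}, (F^c)^\times)$, so that the resulting $\widetilde\varphi$ — hence each Fourier coefficient $a(s)$ — is genuinely $\equiv 1 \pmod{\mathfrak{m}\cO_{F^c}}$ up to a unit, making $a(s) \in \cO_{F^c}$ after dividing by $|G| \mid \mathfrak{m}$. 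Getting the exponent $\exp(G)^2$ (rather than $\exp(G)$) to be exactly what is needed is the delicate point; the natural approach is to first split off the $\chi = 1$ coordinate, then on $A_{\widehat G}$ note that $\exp(G)\cdot\Theta(\psi) \in \mathbb{Z}G$ is divisible appropriately, and finally use that one further factor of $\exp(G)$ controls the ambiguity in lifting along $\mathbb{Z}\widehat G \twoheadrightarrow \widehat G$. I would structure the write-up as: (a) reduce to a single place and to $p \mid |G|$; (b) state the explicit lift via (\ref{Fourier}); (c) prove the congruence $a(s) \equiv \delta_{s,1} \pmod{\mathfrak{m}\cO_{F^c}}$ using the two divisibility hypotheses; (d) deduce $\mathbf{r}_G(a) \in (\cO_{F^c}G)^\times$ and $h_a$ unramified, hence $\Theta^t(g)_v = r_G(a) \in \mathcal{H}(\cO_{F_v}G)$.
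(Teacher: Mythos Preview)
The paper does not prove this theorem; its entire proof is the citation ``See \cite[Proposition 6.9]{M}.'' So there is no in-paper argument to compare against, only McCulloh's original.

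Your outline is a sensible reconstruction and is in the spirit of McCulloh's argument: reduce to a single place, dispose of $p\nmid|G|$ via the equality in (\ref{integral}), and for $p\mid|G|$ produce an explicit lift of $\Theta^t(g)$ to $(\cO_{F^c}G)^\times$ by controlling congruences through the Fourier inversion (\ref{Fourier}). Two remarks. First, your step (ii) is unnecessary: by definition $\mathcal{H}(\cO_FG)=((\cO_{F^c}G)^\times/G)^{\Omega_F}$, and $\Theta^t(g)$ is already $\Omega_F$-fixed as an element of $(F^cG)^\times/G$; so once you exhibit a lift $\mathbf r_G(a)\in(\cO_{F^c}G)^\times$ you are done, and the unramifiedness of $h_a$ is then a consequence (via Proposition~\ref{NBG}(b)), not something to be checked separately. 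Second, the real content is entirely in your step (c), and here your sketch is honest but incomplete: you need to make precise how $\exp(G)^2\mid\mathfrak m$ lets you extract, for each $s\neq1$, an $|s|$-th root of $g(s)\in 1+\mathfrak m\,\cO_{F^c}$ lying in $1+|G|\cO_{F^c}$ (so that the natural extension $\widetilde\varphi(\chi)=\prod_{s}g(s)^{\langle\chi,s\rangle}$ is well-defined with $\widetilde\varphi(\chi)\equiv1\pmod{|G|\cO_{F^c}}$), and then check not only that the Fourier coefficients $a(s)$ are integral but that $\mathbf r_G(a)$ is a \emph{unit} in $\cO_{F^c}G$---which, since $\cO_{F^c}G$ is not a product of copies of $\cO_{F^c}$ when $p\mid|G|$, is not the same as each $\widetilde\varphi(\chi)$ being a unit. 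McCulloh handles this last point by showing $\mathbf r_G(a)\equiv 1\pmod{|G|\cO_{F^c}G}$ (from $a(s)\equiv\delta_{s,1}$), which does force invertibility in $\cO_{F^c}G$. Your plan would go through once these two points are pinned down.
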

\begin{proof}See \cite[Proposition 6.9]{M}.
\end{proof}

\begin{thm}\label{approx2}Let $g\in J(\Lambda(FG))$ and let $T$ be a finite subset of $M_F$. Then, \mbox{there exists} $f\in\mathfrak{F}$ such that $f_v=1$ for all $v\in T$ and 
\[
g\equiv f\hspace{1cm}(\mbox{mod }\lambda(\Lambda(FG)^\times)U_\mathfrak{m}'(\Lambda(\cO_FG))).
\]
Moreover, we may choose $f\in\mathfrak{F}$ so that for each $s\in G(-1)$ with $s\neq 1$, there exists $\omega\in\Omega_F$ such that $f_{\omega\cdot s}\neq 1$.
\end{thm}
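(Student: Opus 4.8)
The plan is to deduce the theorem from Chebotarev's density theorem, after decomposing everything according to the $\Omega_F$-orbits on $G(-1)$. Write $G(-1)\setminus\{1\}=\mathcal{O}_1\sqcup\cdots\sqcup\mathcal{O}_r$ and fix a representative $s_i\in\mathcal{O}_i$ for each $i$; since $\Omega_F$ acts on $G(-1)$ through the $\exp(G)$-th cyclotomic character, the stabilizer of $s_i$ is $\Gal(F^c/F_i)$ with $F_i:=F(\mu_{|s_i|})$, so $\Map_{\Omega_F}(\mathcal{O}_i,F^c)\cong F_i$. This yields compatible product decompositions of $\Lambda(FG)$, $\Lambda(\cO_FG)$, $J(\Lambda(FG))$, of the diagonal map $\lambda$, of $U'_\mathfrak{m}(\Lambda(\cO_FG))$ and of $\mathfrak{F}$ over the $\mathcal{O}_i$ (the component at the identity of $G$ is trivial for every $f\in\mathfrak{F}$ and is annihilated by $\Theta^t$, so we suppress it). Under these decompositions $J(\Lambda(FG))/(\lambda(\Lambda(FG)^\times)U'_\mathfrak{m}(\Lambda(\cO_FG)))$ becomes the finite product $\prod_{i=1}^{r}\Cl_{\mathfrak{m}\cO_{F_i}}(F_i)$ of ray class groups, and, tracing through the definitions, an element of $\mathfrak{F}$ supported at a single prime $v$ of $F$ with local component $f_{F_v,s}$ for some $s\in\mathcal{O}_i\cap G_{(q_v-1)}$ (here $q_v$ is the residue cardinality at $v$, and this condition forces $v$ to split completely in $F_i$) contributes, in the $i$-th factor, the class of a prime $w$ of $F_i$ of residue degree $1$ over $F$ lying above $v$; conversely every such $w$ arises in this way, the choice of $s\in\mathcal{O}_i$ amounting to the choice of which of the $[F_i:F]$ primes above $v$ is singled out. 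Thus it suffices to show that every class of $\Cl_{\mathfrak{m}\cO_{F_i}}(F_i)$ is the class of some prime of $F_i$ of residue degree $1$ over $F$, with such primes available lying over primes of $F$ that avoid any prescribed finite set and are pairwise distinct.

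This class field theory statement is the crux. I would argue as follows: let $H_i$ be the ray class field of $F_i$ attached to the modulus $\mathfrak{m}\cO_{F_i}$; since $\mathfrak{m}$ is an ideal of $\cO_F$ and $F_i/F$ is Galois (being cyclotomic over $F$), the field $H_i$ is Galois over $F$, and $\Gal(H_i/F_i)$ is identified with $\Cl_{\mathfrak{m}\cO_{F_i}}(F_i)$ by the Artin map. Given $\tau\in\Gal(H_i/F_i)\subseteq\Gal(H_i/F)$, Chebotarev's theorem for $H_i/F$ yields infinitely many primes $\mathfrak{q}$ of $F$, unramified in $H_i$ and outside any given finite set, for which some prime $\mathfrak{Q}$ of $H_i$ has $\mathrm{Frob}_{\mathfrak{Q}/F}=\tau$. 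As $\tau$ fixes $F_i$, the prime $\mathfrak{q}$ splits completely in $F_i$, so $w:=\mathfrak{Q}\cap F_i$ has residue degree $1$ over $F$ and $\mathrm{Frob}_{\mathfrak{Q}/F_i}=\mathrm{Frob}_{\mathfrak{Q}/F}=\tau$, whence the Artin symbol of $w$ is $\tau$, i.e.\ $[w]=\tau$ in $\Cl_{\mathfrak{m}\cO_{F_i}}(F_i)$. Infinitude lets us moreover impose pairwise distinctness of the primes $\mathfrak{q}$ and disjointness from $T$, the divisors of $\mathfrak{m}$, and the ramification loci of the $F_i/F$.

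Finally I would assemble $f$. For each $i$, use the previous paragraph to pick a prime $w_i$ of $F_i$ of residue degree $1$ over $F$ whose class is the $i$-th coordinate of the class of $g$ in $\prod_i\Cl_{\mathfrak{m}\cO_{F_i}}(F_i)$; if that coordinate is trivial, pick instead two such primes $w_i,w_i'$ with $[w_i][w_i']=1$, so that the $i$-th component of $f$ remains nontrivial. Choose all these primes of the $F_i$ to lie over pairwise distinct primes $v$ of $F$ that avoid $T$, the divisors of $\mathfrak{m}$, and the ramification of every $F_j/F$. For each such $v$, the chosen prime above it corresponds to a unique $s\in\mathcal{O}_i$, which satisfies $|s|=|s_i|\mid q_v-1$ and so lies in $G_{(q_v-1)}$; put $f_v:=f_{F_v,s}$, and $f_v:=1$ at every remaining prime of $F$. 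Then $f\in\mathfrak{F}$ with $f_v=1$ for all $v\in T$, and by construction $f$ and $g$ have the same image in $\prod_i\Cl_{\mathfrak{m}\cO_{F_i}}(F_i)$, i.e.\ $g\equiv f\pmod{\lambda(\Lambda(FG)^\times)U'_\mathfrak{m}(\Lambda(\cO_FG))}$. Since the support of $f$ meets each orbit $\mathcal{O}_i$, for every $s\neq 1$ in $G(-1)$, writing $s\in\mathcal{O}_i$ and taking $\omega\in\Omega_F$ with $\omega\cdot s$ the element of $\mathcal{O}_i$ singled out at one of the primes in the support, we obtain $f_{\omega\cdot s}\neq 1$. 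The main obstacle is the class field theory input of the second paragraph --- that $H_i/F$ is Galois and that the Frobenius over $F$ of a completely split prime agrees with its Frobenius over $F_i$, so that degree-one-over-$F$ primes genuinely exhaust the ray class group --- while the orbit-wise and local-to-global bookkeeping, though lengthy, is routine.
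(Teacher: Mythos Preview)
The paper does not give its own proof here; it simply cites McCulloh \cite[Proposition~6.14]{M}. Your argument---decomposing along the $\Omega_F$-orbits of $G(-1)\setminus\{1\}$, identifying the $i$-th factor with the ideles of $F_i=F(\mu_{|s_i|})$ so that the quotient becomes a ray class group, and then invoking Chebotarev for the Galois extension $H_i/F$ (with $H_i$ the ray class field of $F_i$ modulo $\mathfrak{m}\cO_{F_i}$) to realise each ray class by a prime of $F_i$ of residue degree one over $F$---is precisely McCulloh's proof, so there is nothing to contrast.

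One point deserves tightening. You suppress the identity orbit on the grounds that $\Theta^t$ annihilates it, but the congruence in the statement is asserted in $J(\Lambda(FG))$ itself, \emph{before} applying $\Theta^t$. Since every $f\in\mathfrak{F}$ has $f_v(1)=1$ while the identity component of the quotient $J(\Lambda(FG))/\lambda(\Lambda(FG)^\times)U'_{\mathfrak{m}}(\Lambda(\cO_FG))$ is $\Cl(F)$, a general $g$ need not be congruent to any $f$ at the identity coordinate. This is a discrepancy between the statement as quoted here and McCulloh's formulation, which tracks only the values at $s\neq 1$; it is harmless for every application in the present paper (where the theorem is used solely through $\Theta^t$, and $\langle\chi,1\rangle=0$), and your proof is complete once read in that sense.
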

\begin{proof}See \cite[Proposition 6.14]{M}.
\end{proof}

\section{Characterization of $\Sigma$-Realizable Classes}\label{SR}

Throughout this section, let $K/k$ denote a fixed Galois extension of number fields and set $\Sigma:=\Gal(K/k)$. We will also assume that $G$ is abelian and fix a left $\Sigma$-module structure on $G$. We will choose $K^c=\mathbb{Q}^c$ and $k^c=\mathbb{Q}^c$, where $\mathbb{Q}^c$ is a fixed algebraic closure of $\mathbb{Q}$ containing $K$, and also the same compatible set $\{\zeta_n:n\in\mathbb{Z}^+\}$ of primitive roots of unity in $\mathbb{Q}^c$ \mbox{for both $k$ and $K$.} As noted in Remark~\ref{tamesubgp}, we will identify $\Hom(\Omega_K^t,G)$ with the subset of $\Hom(\Omega_K,G)$ consisting of the tame homomorphisms.

\begin{definition}Let $V_k$ denote the set of primes in $M_k$ which are ramified in $K/k$, and let $V_K$ denote the set of primes in $M_K$ lying above those in $V_k$.
\end{definition}

The purpose of this section is to characterize the $\Sigma$-realizable classes (recall (\ref{Srealizable})) coming from the homomorphisms $h\in\Hom(\Omega_K^t,G)^\Sigma$ \mbox{for which $h_v$ is un-} ramified for all $v\in V$, under the hypotheses of Theorem~\ref{thm}. We will do so by refining the characterization of $R(\cO_KG)$ given in Theorem~\ref{char1}. The crucial step is to make suitable choices for the embeddings $i_v:\mathbb{Q}^c\longrightarrow K^c_v$ as well as the uniformizers $\pi_v$ in $K_v$ for $v\in M_K$. We will need the following notation.

\begin{definition}\label{7setup}For each $w\in M_k$, let $i_w:\mathbb{Q}^c\longrightarrow k_w^c$ be the chosen embedding extending the natural embedding $k\longrightarrow k_w$. The \emph{distinguished prime (in $K$) lying above $w$} is the prime $v_w\in M_K$ for which the $v_w$-adic absolute value on $K$ is induced by $i_w$. Moreover, for each $v\in M_K$ lying above $w$, choose an element $\gamma_v\in\Sigma$ such that $v=v_w\circ\gamma_v^{-1}$, and we will choose $\gamma_{v_w}=1$. Choose once and for all a lift $\overline{\gamma_v}\in\Omega_k$ of $\gamma_v$ with  $\overline{\gamma_{v_w}}=1$.
\end{definition}

\subsection{Choices of Embeddings}\label{s:7.1}

\begin{definition}\label{embeddings}Given $v\in M_K$, let $w\in M_k$ be the prime lying \mbox{below $v$ and} notice that the $v$-adic absolute value on $K$ is induced by $i_w\circ\overline{\gamma_v}^{-1}$. \mbox{Via re-} stricting $i_w\circ\overline{\gamma_v}^{-1}$, we then obtain an embedding  $K\longrightarrow k_w^c$ which extends to a continuous embedding $K_v\longrightarrow k_w^c$. We will then lift this to an isomorphism $\varepsilon_{v}^{-1}:K_{v}^c\longrightarrow k_w^c$ and define $i_{v}:\mathbb{Q}^c\longrightarrow K_v^c$ by setting $i_{v}:=\varepsilon_{v}\circ i_w\circ\overline{\gamma_v}^{-1}$, which clearly extends the natural embedding $K\longrightarrow K_{v}$. 
\end{definition}

In summary, for all $v\in M_K$ and $w\in M_k$ with $w$ lying below $v$, the diagram
\[
\begin{tikzcd}[column sep=3cm, row sep=2cm]
 K_{v_w}^c\arrow[pos=0.60, bend left=25]{rr}[font=\normalsize]{\widetilde{\gamma_v}}
  & k_w^c \arrow{l}[swap,font=\normalsize]{\varepsilon_{v_w}}
\arrow{r}[font=\normalsize]{\varepsilon_v}
  & K_v^c \\
\mathbb{Q}^c \arrow[-, double equal sign distance]{r} \arrow[hookrightarrow]{u}[font=\normalsize]{i_{v_w}}
  & \mathbb{Q}^c\arrow{r}[swap,font=\normalsize]{\overline{\gamma_v}} \arrow[hookrightarrow]{u}[font=\normalsize]{i_w}
  & \mathbb{Q}^c \arrow[hookrightarrow]{u}[font=\normalsize]{i_v}
\end{tikzcd}
\]
commutes. Here, we define $\widetilde{\gamma_v}:=\varepsilon_v\circ\varepsilon_{v_w}^{-1}$. Observe that $\widetilde{\gamma_{v_w}}=1$ and we have the relation
\begin{equation}\label{ivrelation}
i_v=\widetilde{\gamma_v}\circ i_{v_w}\circ \overline{\gamma_v}^{-1}.
\end{equation}
Via identifying $\Hom(\Omega_K^t,G)$ with the subset of $\Hom(\Omega_K,G)$ consisting of the tame homomorphisms as in Remark~\ref{tamesubgp}, we have the following result.

\begin{prop}\label{hvcompatible}Let $h\in\Hom(\Omega_K^t,G)^\Sigma$. For all $v\in M_K$ and $w\in M_k$ such that $w$ lies below $v$, we have
\[
 h_v(\widetilde{\gamma_v}\circ\omega\circ\widetilde{\gamma_v}^{-1})=\gamma_v\cdot h_{v_w}(\omega)\hspace{1cm}\mbox{for all $\omega\in\Omega_{K_{v_w}}$}.
\]
\end{prop}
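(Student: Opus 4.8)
The plan is to unwind all the definitions on both sides of the claimed identity and reduce everything to the defining relation (\ref{ivrelation}) together with the $\Sigma$-invariance of $h$. Recall that $h_v = h\circ\widetilde{i_v}$ and $h_{v_w}=h\circ\widetilde{i_{v_w}}$, where $\widetilde{i_v}$ and $\widetilde{i_{v_w}}$ are the embeddings $\Omega_{K_v}\hookrightarrow\Omega_K$ and $\Omega_{K_{v_w}}\hookrightarrow\Omega_K$ defined via (\ref{iv}), i.e.\ $\widetilde{i_v}(\tau)=i_v^{-1}\circ\tau\circ i_v$. The first step is to rewrite the left-hand side. Since $\widetilde{\gamma_v}=\varepsilon_v\circ\varepsilon_{v_w}^{-1}:K_{v_w}^c\to K_v^c$, conjugation by $\widetilde{\gamma_v}$ carries $\Omega_{K_{v_w}}$ into $\Omega_{K_v}$ (it identifies the two completions compatibly with the base fields), so $\widetilde{\gamma_v}\circ\omega\circ\widetilde{\gamma_v}^{-1}$ is a genuine element of $\Omega_{K_v}$ for $\omega\in\Omega_{K_{v_w}}$, and the left-hand side equals $h\bigl(\widetilde{i_v}(\widetilde{\gamma_v}\circ\omega\circ\widetilde{\gamma_v}^{-1})\bigr) = h\bigl(i_v^{-1}\circ\widetilde{\gamma_v}\circ\omega\circ\widetilde{\gamma_v}^{-1}\circ i_v\bigr)$.

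The second step is to substitute the relation (\ref{ivrelation}), $i_v=\widetilde{\gamma_v}\circ i_{v_w}\circ\overline{\gamma_v}^{-1}$, into this expression. We get $i_v^{-1}\circ\widetilde{\gamma_v} = \overline{\gamma_v}\circ i_{v_w}^{-1}$ and $\widetilde{\gamma_v}^{-1}\circ i_v = i_{v_w}\circ\overline{\gamma_v}^{-1}$, so the argument of $h$ collapses to
\[
\overline{\gamma_v}\circ i_{v_w}^{-1}\circ\omega\circ i_{v_w}\circ\overline{\gamma_v}^{-1}
= \overline{\gamma_v}\circ\widetilde{i_{v_w}}(\omega)\circ\overline{\gamma_v}^{-1}.
\]
Hence the left-hand side equals $h\bigl(\overline{\gamma_v}\,\widetilde{i_{v_w}}(\omega)\,\overline{\gamma_v}^{-1}\bigr)$, where $\widetilde{i_{v_w}}(\omega)\in\Omega_K^t$ (it is tame since $h$ is, and in any case lies in $\Omega_K$; strictly one passes to the image in $\Omega_K^t$ as in Remark~\ref{tamesubgp}) and $\overline{\gamma_v}\in\Omega_k$ is the chosen lift of $\gamma_v\in\Sigma$.

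The final step is to invoke the $\Sigma$-invariance of $h$ exactly as in the computation in the proof of Proposition~\ref{Brelation}: by Definition~\ref{tr}, $h\bigl(\overline{\gamma_v}\,\tau\,\overline{\gamma_v}^{-1}\bigr)=\gamma_v\cdot(h\cdot\gamma_v)(\tau)=\gamma_v\cdot h(\tau)$ for $\tau\in\Omega_K^t$, the last equality because $h\cdot\gamma_v = h$; applied with $\tau=\widetilde{i_{v_w}}(\omega)$ this gives $\gamma_v\cdot h\bigl(\widetilde{i_{v_w}}(\omega)\bigr)=\gamma_v\cdot h_{v_w}(\omega)$, which is the right-hand side. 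One small point to be careful about: Definition~\ref{tr} uses lifts $\overline{\gamma}\in\Gal(K^t/k)$ rather than in $\Omega_k$, so I would note that $\overline{\gamma_v}\in\Omega_k$ restricts to such a lift and that $h\bigl(\overline{\gamma_v}\,\tau\,\overline{\gamma_v}^{-1}\bigr)$ only depends on $\overline{\gamma_v}|_{K^t}$ because $\tau\in\Omega_K^t$ and $h$ factors through $\Omega_K^t$; independence of the choice of lift is guaranteed since $G$ is abelian.

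The main obstacle is bookkeeping rather than mathematics: one must be scrupulous about which algebraic closure each automorphism acts on (elements of $\Omega_{K_{v_w}}$, $\Omega_{K_v}$, $\Omega_K$, $\Omega_k$) and verify that every conjugation and composition written down actually lands in the group claimed, so that formulas like $\widetilde{i_v}\circ(\text{conj by }\widetilde{\gamma_v}) = (\text{conj by }\overline{\gamma_v})\circ\widetilde{i_{v_w}}$ are literally equalities of maps $\Omega_{K_{v_w}}\to\Omega_K$. Once the identifications from Definitions~\ref{7setup} and~\ref{embeddings} and the commuting square preceding (\ref{ivrelation}) are in place, the computation is a direct substitution followed by one application of $\Sigma$-invariance.
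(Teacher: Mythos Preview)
Your proposal is correct and follows essentially the same approach as the paper: unwind $h_v=h\circ\widetilde{i_v}$, substitute the relation $i_v=\widetilde{\gamma_v}\circ i_{v_w}\circ\overline{\gamma_v}^{-1}$ to obtain $h(\overline{\gamma_v}\,\widetilde{i_{v_w}}(\omega)\,\overline{\gamma_v}^{-1})$, and then apply the $\Sigma$-invariance of $h$. The paper's proof is a terse four-line display doing exactly this computation; your additional remarks about lifts in $\Omega_k$ versus $\Gal(K^t/k)$ and the bookkeeping of where each automorphism lives are correct but not spelled out in the paper.
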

\begin{proof}Let $v\in M_K$ and $w\in M_k$ with $w$ lying below $v$. We have $h_v=h\circ\widetilde{i_v}$ (recall (\ref{iv})) by definition. Using (\ref{ivrelation}), we see that for all $\omega\in\Omega_{K_{v_w}}$, we have
\begin{align*}
h_v(\widetilde{\gamma_v}\circ\omega\circ\widetilde{\gamma_v}^{-1})
&=h(i_v^{-1}\circ\widetilde{\gamma_v}\circ\omega\circ\widetilde{\gamma_v}^{-1}\circ i_v)\\
&=h(\overline{\gamma_v}\circ i_{v_w}^{-1}\circ\omega\circ i_{v_w}\circ\overline{\gamma_v}^{-1})\\
&=\gamma_v\cdot (h\cdot\gamma_v)(i_{v_w}^{-1}\circ\omega\circ i_{v_w})\\
&=\gamma_v\cdot h_{v_w}(\omega),
\end{align*}
where the last equality holds because $h$ is $\Sigma$-invariant. 
\end{proof}

\subsection{Choices of Uniformizers and their Radicals}For each $w\in M_k$, let $\pi_w$ be a chosen uniformizer in $k_w$ and let $\{\pi_w^{1/n}:n\in\mathbb{Z}^+\}$ denote the chosen coherent set of radicals of $\pi_w$ in $k_w^c$ (recall Subsection~\ref{local F sec} above).

\begin{definition}\label{uniformizers}Given $v\in M_K\setminus V_K$, let $w\in M_k$ be the prime lying below $v$. We will choose $\pi_v:=\varepsilon_v(\pi_w)$ to be the uniformizer in $K_v$, and $\pi_v^{1/n}:=\varepsilon_v(\pi_w^{1/n})$ for $n\in\mathbb{Z}^+$ to the be coherent radicals of $\pi_v$ in $K_v^c$. As for $v\in V_K$, we will choose the uniformizer $\pi_v$ in $K_v$ and its radicals arbitrarily.
\end{definition}

\begin{lem}\label{pirel}For all $v\in M_K$ and $w\in M_k$ such that $w$ lies below $v$, we have that $v\notin V_K$ if and only if $v_w\notin V_K$. In particular, if $v\notin V_K$, then  for all $n\in\mathbb{Z}^+$ we have $\pi_v^{1/n}=\widetilde{\gamma_v}(\pi_{v_w}^{1/n})$.
\end{lem}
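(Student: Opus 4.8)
The plan is to prove Lemma~\ref{pirel} in two steps: first the claim about ramification, then the claim about radicals.

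\textbf{Step 1: $v\notin V_K$ if and only if $v_w\notin V_K$.} Recall that $V_K$ consists of the primes in $M_K$ lying above the primes in $V_k$, and $V_k$ consists of the primes in $M_k$ ramified in $K/k$. Both $v$ and $v_w$ lie above the same prime $w\in M_k$ by Definition~\ref{7setup} (indeed $v_w$ is the distinguished prime above $w$ and $v = v_w\circ\gamma_v^{-1}$ lies above $w$ since $\gamma_v\in\Sigma=\Gal(K/k)$ fixes $k$). Hence $v\in V_K$ if and only if $w\in V_k$ if and only if $v_w\in V_K$, which gives the equivalence. Contrapositively, $v\notin V_K \iff v_w\notin V_K$.

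\textbf{Step 2: the radical relation.} Suppose $v\notin V_K$, so by Step 1 also $v_w\notin V_K$, and both uniformizers $\pi_v$, $\pi_{v_w}$ and their coherent radicals were chosen via Definition~\ref{uniformizers}: $\pi_v = \varepsilon_v(\pi_w)$ with $\pi_v^{1/n}=\varepsilon_v(\pi_w^{1/n})$, and likewise $\pi_{v_w}=\varepsilon_{v_w}(\pi_w)$ with $\pi_{v_w}^{1/n}=\varepsilon_{v_w}(\pi_w^{1/n})$. Recall $\widetilde{\gamma_v}=\varepsilon_v\circ\varepsilon_{v_w}^{-1}$. Then for all $n\in\mathbb{Z}^+$,
\[
\widetilde{\gamma_v}(\pi_{v_w}^{1/n})
= \varepsilon_v\bigl(\varepsilon_{v_w}^{-1}(\varepsilon_{v_w}(\pi_w^{1/n}))\bigr)
= \varepsilon_v(\pi_w^{1/n}) = \pi_v^{1/n},
\]
as desired. (The case $n=1$ gives $\widetilde{\gamma_v}(\pi_{v_w})=\pi_v$.)

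There is essentially no obstacle here: the lemma is a bookkeeping check that the choices made in Definitions~\ref{embeddings} and~\ref{uniformizers} are compatible along the transition isomorphism $\widetilde{\gamma_v}$, and it follows by unwinding those definitions together with the commutative diagram preceding~(\ref{ivrelation}). The only point requiring a moment's care is Step~1 — one must note that $v$ and $v_w$ genuinely lie above the \emph{same} prime of $k$, which is immediate from the definition $v=v_w\circ\gamma_v^{-1}$ with $\gamma_v\in\Gal(K/k)$ — and the observation that the construction in Definition~\ref{uniformizers} was applied to both $v$ and $v_w$ precisely because neither lies in $V_K$.
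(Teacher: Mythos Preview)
Your proof is correct and follows essentially the same approach as the paper: for Step~1 the paper simply notes that $K/k$ is Galois (so $v$ and $v_w$ lie above the same prime $w$ and are simultaneously in or out of $V_K$), and for Step~2 it performs the identical computation using $\widetilde{\gamma_v}=\varepsilon_v\circ\varepsilon_{v_w}^{-1}$ together with Definition~\ref{uniformizers}. Your write-up is slightly more explicit in Step~1, but there is no substantive difference.
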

\begin{proof}Since $K/k$ is Galois, it is clear that $v\notin V_K$ if and only if $v_w\notin V_K$. Using the equality $\widetilde{\gamma_v}=\varepsilon_v\circ\varepsilon_{v_w}^{-1}$, we see that if $v\notin V_K$, then
\[
\pi_v^{1/n}=\varepsilon_v(\pi_w^{1/n})=\widetilde{\gamma_v}(\varepsilon_{v_w}(\pi_w^{1/n}))=\widetilde{\gamma_v}(\pi_{v_w}^{1/n})
\]
for all $n\in\mathbb{Z}^+$ by Definition~\ref{uniformizers}.
\end{proof}

Notice that the choices made in Definitions~\ref{embeddings} and~\ref{uniformizers} in turn determine a distinguished topological generator $\sigma_v=\sigma_{K_v}$ of $\Gal(K_v^t/K_v^{nr})$ (recall (\ref{sigma})). In particular, because we chose $\{i_v(\zeta_n):n\in\mathbb{Z}^+\}$ to be the compatible set of primitive roots of unity in $K_v^c$, we have
\begin{equation}\label{sigma'}
\sigma_v(\pi_v^{1/n})=i_v(\zeta_n)\pi_v^{1/n}\hspace{1cm}\mbox{for }(n,p)=1,
\end{equation}
where $p$ is the rational prime lying below $v$. By abuse of notation, we will also use $\sigma_v$ to denote some chosen lift of $\sigma_v$ in $\Omega_{K_v}$. Via identifying $\Hom(\Omega_K^t,G)$ with the subset of $\Hom(\Omega_K,G)$ consisting of the tame homomorphisms as in Remark~\ref{tamesubgp}, we then have the following result.

\begin{prop}\label{hsigma}Let $h\in\Hom(\Omega_K^t,G)^\Sigma$. For all $v\in M_K\setminus V_K$ and $w\in M_k$ such that $w$ lies below $v$, if $\zeta_{e_v}\in k$ where $e_v:=|h_v(\sigma_v)|$, then \[
h_v(\sigma_v)=\gamma_v\cdot h_{v_w}(\sigma_{v_w}).
\]
\end{prop}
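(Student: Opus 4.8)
The plan is to reduce the statement to Proposition~\ref{hvcompatible} by comparing the two elements $\sigma_v$ and $\widetilde{\gamma_v}\circ\sigma_{v_w}\circ\widetilde{\gamma_v}^{-1}$ of $\Omega_{K_v}$. Taking $\omega=\sigma_{v_w}$ in Proposition~\ref{hvcompatible} gives at once
\[
h_v(\widetilde{\gamma_v}\circ\sigma_{v_w}\circ\widetilde{\gamma_v}^{-1})=\gamma_v\cdot h_{v_w}(\sigma_{v_w}),
\]
so it is enough to show that $h_v(\widetilde{\gamma_v}\circ\sigma_{v_w}\circ\widetilde{\gamma_v}^{-1})=h_v(\sigma_v)$.

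First I would note that, since $K/k$ is Galois, $\widetilde{\gamma_v}=\varepsilon_v\circ\varepsilon_{v_w}^{-1}$ restricts to an isomorphism $K_{v_w}\longrightarrow K_v$ (both are completions of $K$), and hence, transporting the canonical valuation on $K_{v_w}^c$ to that on $K_v^c$, it carries $K_{v_w}^{nr}$ onto $K_v^{nr}$ and $K_{v_w}^t$ onto $K_v^t$. Therefore conjugation by $\widetilde{\gamma_v}$ maps $\Gal(K_{v_w}^t/K_{v_w}^{nr})$ onto $\Gal(K_v^t/K_v^{nr})$, and since the latter is procyclic with topological generator $\sigma_v$, the restriction of $\widetilde{\gamma_v}\circ\sigma_{v_w}\circ\widetilde{\gamma_v}^{-1}$ to $K_v^t$ is a power $\sigma_v^a$ for a unique $a$ in that procyclic group. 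To identify $a$, I would evaluate on the radicals $\pi_v^{1/n}$ with $(n,p)=1$: using Lemma~\ref{pirel} (valid because $v\notin V_K$) to rewrite $\pi_v^{1/n}=\widetilde{\gamma_v}(\pi_{v_w}^{1/n})$, the defining relation (\ref{sigma'}) for $\sigma_{v_w}$, and then (\ref{ivrelation}) to turn $\widetilde{\gamma_v}(i_{v_w}(\zeta_n))$ into $i_v(\overline{\gamma_v}(\zeta_n))$, one obtains
\[
(\widetilde{\gamma_v}\circ\sigma_{v_w}\circ\widetilde{\gamma_v}^{-1})(\pi_v^{1/n})=i_v(\zeta_n)^{t_n}\,\pi_v^{1/n},
\]
where $t_n$ is the integer mod $n$ by which $\overline{\gamma_v}\in\Omega_k$ acts on $\zeta_n$. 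Comparing with $\sigma_v^a(\pi_v^{1/n})=i_v(\zeta_n)^a\pi_v^{1/n}$ (recall $\sigma_v$ fixes $i_v(\zeta_n)$ for $(n,p)=1$), we conclude $a\equiv t_n\pmod n$ for every $n$ coprime to $p$; in other words $a$ is the value at $\overline{\gamma_v}$ of the full cyclotomic character of $\Omega_k$.

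Now comes the step where the hypothesis is used, and this is the one I expect to be the real content. Since $h_v$ factors through $\Omega_{K_v}^t$, the left-hand side of the first displayed equation equals $h_v(\sigma_v)^{a\bmod e_v}$, where $e_v=|h_v(\sigma_v)|$; here $(e_v,p)=1$ because $h_v(\sigma_v)\in G_{(q_v-1)}$, so $t_{e_v}$ is defined and $a\equiv t_{e_v}\pmod{e_v}$ by the previous paragraph. The assumption $\zeta_{e_v}\in k$ says precisely that $\Omega_k$ fixes $\zeta_{e_v}$, hence $t_{e_v}\equiv 1\pmod{e_v}$, so $a\equiv 1\pmod{e_v}$ and $h_v(\sigma_v)^{a\bmod e_v}=h_v(\sigma_v)$. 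Combining this with the first displayed equation yields $h_v(\sigma_v)=\gamma_v\cdot h_{v_w}(\sigma_{v_w})$, as desired.

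I expect the only slightly delicate bookkeeping to be the verification that $\widetilde{\gamma_v}$ genuinely carries $K_{v_w}^{nr}$, $K_{v_w}^t$, and the compatible roots of unity to the corresponding objects attached to $v$; once that is in place, everything reduces to formal manipulation of (\ref{ivrelation}), (\ref{sigma'}) and Lemma~\ref{pirel}, together with the crucial observation that $\zeta_{e_v}\in k$ kills the discrepancy $a$ between $\sigma_v$ and the conjugate of $\sigma_{v_w}$.
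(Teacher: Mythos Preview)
Your proof is correct and follows essentially the same route as the paper: reduce to Proposition~\ref{hvcompatible}, then compare $\sigma_v$ with $\widetilde{\gamma_v}\circ\sigma_{v_w}\circ\widetilde{\gamma_v}^{-1}$ via the explicit computation on radicals using Lemma~\ref{pirel}, (\ref{sigma'}), and (\ref{ivrelation}), invoking $\zeta_{e_v}\in k$ to kill the discrepancy. The only cosmetic difference is that the paper checks agreement directly on the field $K_v^{h_v}$ (splitting into its unramified and totally ramified pieces via Definition~\ref{factorh}), whereas you identify the conjugate as $\sigma_v^a$ inside the full tame inertia group and then reduce $a$ modulo $e_v$; the underlying calculation is identical.
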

\begin{proof}Let $v\in M_K\setminus V_K$ and $w\in M_k$ with $w$ lying below $v$. We already know from Proposition~\ref{hvcompatible} that $h_v(\widetilde{\gamma_v}\circ\sigma_{v_w}\circ\widetilde{\gamma_v}^{-1})=\gamma_v\cdot h_{v_w}(\sigma_{v_w})$. Thus, it suffices to show that $h_v(\widetilde{\gamma_v}\circ\sigma_{v_w}\circ\widetilde{\gamma_v}^{-1})=h_v(\sigma_v)$, or equivalently, that $\widetilde{\gamma_v}\circ\sigma_{v_w}\circ\widetilde{\gamma_v}^{-1}$ and $\sigma_v$ have the same action on the fixed field $L:=K_v^{h_v}$ of $\ker(h_v)$.

Let $h_v=h_v^{nr}h_v^{tot}$ be the factorization of $h_v$ with respect to $\sigma_v$ (recall Definition~\ref{factorh}). Set $L^{nr}:=K_v^{h_v^{nr}}$ and $L^{tot}:=K_v^{h_v^{tot}}$. It is clear that $L\subset L^{nr}L^{tot}$ and that both $\widetilde{\gamma_v}\circ\sigma_{v_w}\circ\widetilde{\gamma_v}^{-1}$ and $\sigma_v$ act as the identity on $L^{nr}$ because $L^{nr}/K_v$ is unramified. We also have $L^{tot}=K_v(\pi_v^{1/e_v})$ by \cite[Proposition 5.4]{M}. Hence, it remains to show that
\[
(\widetilde{\gamma_v}\circ\sigma_{v_w}\circ\widetilde{\gamma_v}^{-1})(\pi_v^{1/e_v})=\sigma_v(\pi_v^{1/e_v}).
\]
But $\pi_v^{1/e_v}=\widetilde{\gamma_v}(\pi_{v_w}^{1/e_v})$ by Lemma~\ref{pirel} since $v\notin V_K$. Using (\ref{sigma'}), we obtain
\begin{align*}
(\widetilde{\gamma_v}\circ\sigma_{v_w}\circ\widetilde{\gamma_v}^{-1})(\pi_v^{1/e_v})
&=\widetilde{\gamma_v}(i_{v_w}(\zeta_{e_v})\pi_{v_w}^{1/e_v})\\
&=(\widetilde{\gamma_v}\circ i_{v_w})(\zeta_{e_v})\pi_v^{1/e_v}\\
&=(\widetilde{\gamma_v}\circ i_{v_w}\circ\overline{\gamma_v}^{-1})(\zeta_{e_v})\pi_v^{1/e_v}\\
&=i_v(\zeta_{e_v})\pi_v^{1/e_v}\\
&=\sigma_v(\pi_v^{1/e_v}),
\end{align*}
where $\overline{\gamma_v}^{-1}(\zeta_{e_v})=\zeta_{e_v}$ because $\zeta_{e_v}\in k$ by hypothesis and $i_v=\widetilde{\gamma_v}\circ i_{v_w}\circ \overline{\gamma_v}^{-1}$ by  (\ref{ivrelation}). So, indeed $\widetilde{\gamma_v}\circ\sigma_{v_w}\circ\widetilde{\gamma_v}^{-1}$ and $\sigma_v$ have the same action on $L$. This proves the claim.
\end{proof}

\subsection{Embeddings of Groups of Ideles}\label{s:7.2} In this subsection, assume further that $k$ contains all $\exp(G)$-th roots of unity. Recall (\ref{lambda1}) and Definition~\ref{cyclotomic}, and observe that $\Lambda(FG)=\Map(G,F)$ for $F\in\{k,K,k_w,K_v\}$, where $w\in M_k$ and $v\in M_K$. It will also be helpful to recall Definitions~\ref{rag} \mbox{and~\ref{Theta}. The iso-} morphisms $\varepsilon_v$ for $v\in M_K$ in Definition~\ref{embeddings} then induce the following embeddings of groups of ideles.

\begin{definition}Define $\nu: J(\Lambda(kG))\longrightarrow J(\Lambda(KG))$ by setting
\[
\nu(g)_v:=\varepsilon_v\circ g_w\hspace{1cm}\mbox{for each }v\in M_K,
\]
where $w\in M_k$ is the prime lying below $v$. 

Similarly, define $\mu:J(\mathcal{H}(kG))\longrightarrow J(\mathcal{H}(KG))$ by setting
\[
\mu((r_G(a))_v:=r_G(\varepsilon_v\circ a_w)\hspace{1cm}
\mbox{for each }v\in M_K,
\]
where $w\in M_k$ is the prime lying below $v$ and $a_w\in\Map(G,k_w^c)$ is an element such that $r_G(a_w)=r_G(a)_w$. Notice that the definition of $\mu$ does not require that $k$ contains all $\exp(G)$-th roots of unity.
\end{definition}

First, we will prove two basic properties of the map $\nu$. Recall Definition~\ref{primeFlocal} and (\ref{primeF}), and note that the choices of the uniformizers $\pi_w$ in $k_w$ for $w\in M_k$ determine a subset $\mathfrak{F}_k$ of $J(\Lambda(kG))$. Similarly, the choices of the uniformizers $\pi_v$ in $K_v$ for $v\in M_K$ in Definition~\ref{uniformizers} determine a subset $\mathfrak{F}_K$ of $J(\Lambda(KG))$.

\begin{prop}\label{nuF}Let $f\in\mathfrak{F}_k$ and write $f_w=f_{k_w,s_w}$ for each $w\in M_k$. For all $v\in M_K\setminus V_K$ and $w\in M_k$ such that $w$ lies below $v$, we have $\nu(f)_v=f_{K_v,s_w}$. In particular, if $f_w=1$ for all $w\in V_k$, then $\nu(f)\in\mathfrak{F}_K$.
\end{prop}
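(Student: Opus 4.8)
The plan is to unwind both sides of the asserted identity $\nu(f)_v = f_{K_v,s_w}$ directly from the definitions of $\nu$, of prime $\mathfrak{F}$-elements (Definition~\ref{primeFlocal}), and of the chosen uniformizers (Definition~\ref{uniformizers}), using the compatibility $\pi_v = \varepsilon_v(\pi_w)$ that holds precisely because $v\notin V_K$. First I would fix $v\in M_K\setminus V_K$, let $w\in M_k$ be the prime below it, and recall that by hypothesis $f_w = f_{k_w,s_w}$ for some $s_w \in G_{(q_w-1)}$; since $k$ contains all $\exp(G)$-th roots of unity, $\Lambda(kG) = \Map(G,k)$ and $f_w$ is the map sending $s_w \mapsto \pi_w$ (if $s_w \neq 1$) and everything else to $1$. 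By definition $\nu(f)_v = \varepsilon_v \circ f_w$, so $\nu(f)_v$ is the map $G \longrightarrow K_v$ sending $s_w \mapsto \varepsilon_v(\pi_w)$ and all other $t\in G$ to $\varepsilon_v(1) = 1$.

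Next I would invoke Definition~\ref{uniformizers}: because $v\notin V_K$, the chosen uniformizer in $K_v$ is exactly $\pi_v := \varepsilon_v(\pi_w)$. Hence $\nu(f)_v$ sends $s_w \mapsto \pi_v$ and all other elements to $1$, which is precisely the definition of $f_{K_v, s_w}$. One should also note $s_w \in G_{(q_w-1)} \subseteq G_{(q_v-1)}$ since $v$ is unramified over $w$ forces $q_v$ to be a power of $q_w$ so $q_w - 1 \mid q_v - 1$; this guarantees $f_{K_v,s_w}$ is a legitimate prime $\mathfrak{F}$-element over $K_v$ (and it automatically lies in $\Lambda(K_vG)^\times = \Map(G,K_v)^\times$ since $K\supseteq k$ contains the relevant roots of unity). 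That establishes the first assertion.

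For the "in particular" claim, suppose $f_w = 1$ for all $w\in V_k$. I need to check $\nu(f)_v \in \mathfrak{F}_{K_v}$ for every $v\in M_K$, splitting into two cases. If $v\notin V_K$, then by Lemma~\ref{pirel} the prime $w$ below $v$ also satisfies... actually more simply: $w\notin V_k$ (since $v$ ramified over $k$ iff $w$ is), so $f_w = f_{k_w,s_w}$ for some $s_w$, and the first part of the proposition gives $\nu(f)_v = f_{K_v,s_w} \in \mathfrak{F}_{K_v}$. If $v\in V_K$, then $w\in V_k$, so $f_w = 1$, hence $\nu(f)_v = \varepsilon_v\circ 1 = 1 = f_{K_v,1} \in \mathfrak{F}_{K_v}$ (the trivial prime $\mathfrak{F}$-element, corresponding to $s = 1$). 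In either case $\nu(f)_v \in \mathfrak{F}_{K_v}$, so $\nu(f) \in \mathfrak{F}_K$ by the definition in (\ref{primeF}).

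**The main subtlety** — rather than a genuine obstacle — is keeping straight that this only works cleanly for $v\notin V_K$, where the uniformizer $\pi_v$ was chosen compatibly as $\varepsilon_v(\pi_w)$; for $v\in V_K$ the uniformizers were chosen arbitrarily, so $\nu(f)_v$ need not be a prime $\mathfrak{F}$-element associated to the chosen $\pi_v$ unless $f_w = 1$ (making $\nu(f)_v = 1$, which is a prime $\mathfrak{F}$-element regardless of the uniformizer). This is exactly why the hypothesis "$f_w = 1$ for all $w\in V_k$" appears in the second statement, and it is the one place where the argument must branch. Everything else is a routine unwinding of definitions.
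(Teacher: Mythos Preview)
Your proposal is correct and follows essentially the same approach as the paper's own proof: both arguments unwind the definition of $\nu(f)_v=\varepsilon_v\circ f_w$, use $\pi_v=\varepsilon_v(\pi_w)$ for $v\notin V_K$, verify $s_w\in G_{(q_v-1)}$ via $q_w-1\mid q_v-1$, and handle the ``in particular'' by noting $\nu(f)_v=1$ for $v\in V_K$. One very minor remark: the divisibility $q_w-1\mid q_v-1$ holds simply because the residue field of $K_v$ contains that of $k_w$ (so $q_v$ is a power of $q_w$), not because $v$ is unramified over $w$ specifically---but this does not affect the argument.
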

\begin{proof}Let $v\in M_K\setminus V_K$ and $w\in M_k$ with $w$ lying below $v$. Let $q_v$ and $q_w$ denote the orders of the residue fields of $K_v$ and $k_w$, respectively. The order of $s_w$ divides $q_w-1$ by definition and hence divides $q_v-1$. Because $v\notin V_K$, we have $\pi_v=\varepsilon_v(\pi_w)$ by definition and it is clear that $\nu(f)_v=f_{K_v,s_w}$. We then see that $\nu(f)_v\in\mathfrak{F}_{K_v}$. If $f_w=1$ for all $w\in V_k$, then $\nu(f)_v=1$ \mbox{lies in $\mathfrak{F}_{K_v}$} for all $v\in V_K$ as well. We then deduce that $\nu(f)\in\mathfrak{F}_K$ in this case.
\end{proof}

\begin{prop}\label{nuF'}Let $f\in\mathfrak{F}_K$ and write $f_v=f_{K_v,s_v}$ for each $v\in M_K$. If
\begin{enumerate}[(1)]
\item $s_v=1$ for all $v\in V_K$; and
\item $s_v=s_{v_w}$ for all $v\in M_K$ and $w\in M_k$ such that $w$ lies below $v$,
\end{enumerate}
then we have $f=\nu(g)$ for some $g\in J(\Lambda(kG))$.
\end{prop}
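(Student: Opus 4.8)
The plan is to write down $g$ explicitly, componentwise, using the distinguished primes $v_w$ of Definition~\ref{7setup}. For each $w\in M_k$ I would take $v_w\in M_K$ to be the distinguished prime above $w$ and set $g_w:=\varepsilon_{v_w}^{-1}\circ f_{v_w}$, where $f_{v_w}=f_{K_{v_w},s_{v_w}}$ is the given $v_w$-component of $f$. The first point to check is that $g_w$ lies in $\Lambda(k_wG)^\times=\Map(G,k_w)^\times$. If $w\notin V_k$, then $v_w\notin V_K$ because $K/k$ is Galois, so Definition~\ref{uniformizers} gives $\pi_{v_w}=\varepsilon_{v_w}(\pi_w)$; since $f_{v_w}$ takes only the values $1$ and $\pi_{v_w}$, applying $\varepsilon_{v_w}^{-1}$ shows $g_w$ is the map sending $s_{v_w}$ to $\pi_w$ (when $s_{v_w}\neq1$) and everything else to $1$, which plainly lies in $\Map(G,k_w)^\times$. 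If instead $w\in V_k$, then $v_w\in V_K$ and hypothesis (1) forces $s_{v_w}=1$, so $f_{v_w}=1$ and $g_w=1\in\Map(G,k_w)^\times$. In all cases $g_w$ coincides with $f_{k_w,s_{v_w}}$ in the notation of Definition~\ref{primeFlocal} (with the convention $f_{k_w,1}=1$); note that $s_{v_w}$ need not lie in $G_{(q_w-1)}$, so $g_w$ need not be a genuine prime $\mathfrak{F}$-element over $k_w$, but this is immaterial since we only need $g\in J(\Lambda(kG))$.

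Next I would verify that $g=(g_w)_w$ is a genuine idele, i.e.\ $g_w\in\Lambda(\cO_{k_w}G)^\times$ for all but finitely many $w$. Since $f\in J(\Lambda(KG))$ by the definition in (\ref{primeF}), the component $f_{v_w}$ lies in $\Lambda(\cO_{K_{v_w}}G)^\times$ for all but finitely many $w$, the assignment $w\mapsto v_w$ being injective. But for $s\neq1$ the element $f_{K_{v_w},s}$ takes the non-unit value $\pi_{v_w}$, so $f_{v_w}\in\Lambda(\cO_{K_{v_w}}G)^\times$ already forces $s_{v_w}=1$, whence $g_w=1\in\Lambda(\cO_{k_w}G)^\times$. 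Therefore $g\in J(\Lambda(kG))$.

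The main verification is then that $\nu(g)_v=f_v$ for every $v\in M_K$. Fixing $v$ and letting $w$ lie below it, we have by definition $\nu(g)_v=\varepsilon_v\circ g_w=\varepsilon_v\circ\varepsilon_{v_w}^{-1}\circ f_{v_w}=\widetilde{\gamma_v}\circ f_{v_w}$. If $v\in V_K$, then $v_w\in V_K$, so $s_{v_w}=1$ by hypothesis (1), giving $\nu(g)_v=\widetilde{\gamma_v}\circ1=1$; and $s_v=1$ by (1) as well, so $f_v=1$ and the two agree. If $v\notin V_K$, then $v_w\notin V_K$, so Lemma~\ref{pirel} applied with $n=1$ gives $\widetilde{\gamma_v}(\pi_{v_w})=\pi_v$, while $\widetilde{\gamma_v}$, being a field isomorphism, sends $1$ to $1$; hence $\widetilde{\gamma_v}\circ f_{v_w}=f_{K_v,s_{v_w}}$, which equals $f_{K_v,s_v}=f_v$ by hypothesis (2). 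This establishes $\nu(g)=f$.

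I expect the only real subtlety to lie in the first two steps: making sure the chosen $g_w$ genuinely takes values in $k_w$ rather than merely in $k_w^c$, and that $g$ really belongs to the restricted product. This is exactly where the two hypotheses are used — hypothesis (1) disposes of the ramified primes, where $\pi_v$ was chosen arbitrarily and has no relation to $\pi_w$, while the coherent choice of uniformizers in Definition~\ref{uniformizers} handles the unramified primes; hypothesis (2) is then needed only to transport $g$ correctly to the non-distinguished primes above each $w$ via $\widetilde{\gamma_v}$. Everything else is routine bookkeeping with the commutative square relating $i_v$, $i_{v_w}$, $\varepsilon_v$, and $\widetilde{\gamma_v}$.
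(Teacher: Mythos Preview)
Your proof is correct and follows essentially the same route as the paper. The only cosmetic difference is that the paper defines $g_w$ directly by the explicit formula $g_w(s)=\pi_w$ if $s=s_{v_w}\neq1$ and $g_w(s)=1$ otherwise, whereas you define $g_w:=\varepsilon_{v_w}^{-1}\circ f_{v_w}$ and then unpack it to that same formula; likewise the paper verifies $\nu(g)_v=f_v$ by splitting on whether $s_{v_w}=1$, while you split on whether $v\in V_K$, but the content of the two case analyses is identical.
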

\begin{proof}
For each $w\in M_k$, define $g_w\in\Lambda(k_wG)^\times=\Map(G,k_w^\times)$ by
\[
g_w(s):=
\begin{cases}
\pi_w &\mbox{if }s=s_{v_w}\neq 1\\
1 &\mbox{otherwise}.
\end{cases}
\]
Note that $g:=(g_w)_w\in J(\Lambda(kG))$ because $f\in J(\Lambda(KG))$ implies that $s_{v}=1$ for all but finitely many $v\in M_K$. To prove that $f=\nu(g)$, let $v\in M_K$ and let $w\in M_k$ be the prime lying below $v$. If $s_{v_w}\neq 1$, then $s_v\neq1$ also by (2) and so $v\notin V_K$ by (1). We then obtain  $\nu(g)_v=f_{K_v,s_v}$ since $\pi_v=\varepsilon_v(\pi_w)$ by definition and $s_v=s_{v_w}$ by (2). If $s_{v_w}=1$, then $s_v=1$ by (2) and so $\nu(g)_v=1=f_{K_v,s_v}$. This shows that $f=\nu(g)$ and so $f\in\nu(J(\Lambda(kG)))$, as claimed.
\end{proof}

Next, we will show that certain diagrams involving $\nu$ and $\mu$ commute.

\begin{prop}\label{commutenu}The diagram
\[
\begin{tikzpicture}[baseline=(current bounding box.center)]
\node at (0,2.5) [name=13] {$\Lambda(kG)^\times$};
\node at (5.5,2.5) [name=14] {$J(\Lambda(kG))$};
\node at (0,0) [name=23] {$\Lambda(KG)^\times$};
\node at (5.5,0) [name=24] {$J(\Lambda(KG))$};
\path[->, font=\normalsize]
(13) edge node[auto]{$\lambda_k$} (14)
(23) edge node[below]{$\lambda_K$} (24)
(14) edge node[right]{$\nu$} (24)
(13) edge node[left]{$\iota_\Lambda$} (23);
\end{tikzpicture}
\]
commutes, where $\iota_\Lambda$ is the map induced by the natural inclusion $k\longrightarrow K$.
\end{prop}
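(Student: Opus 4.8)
The plan is to prove the commutativity by a direct computation on elements, reducing the whole square to a single identity among the field embeddings $i_v$, $i_w$ and $\varepsilon_v$ fixed in Subsection~\ref{s:7.1}. Since we are assuming in this subsection that $k$ contains all $\exp(G)$-th roots of unity, we have $\Lambda(FG)=\Map(G,F)$ for $F\in\{k,K,k_w,K_v\}$; in particular an element $g\in\Lambda(kG)^\times$ is just a map $g\colon G\to k^\times$, its values lying in $k=(k^c)^{\Omega_k}$ precisely because $\Omega_k$ then acts trivially on $G(-1)$, and $\iota_\Lambda(g)$ is the same map regarded as $K^\times$-valued via the inclusion $k\hookrightarrow K$.

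First I would fix $g\in\Lambda(kG)^\times$ and compare the two ideles $\nu(\lambda_k(g))$ and $\lambda_K(\iota_\Lambda(g))$ in $J(\Lambda(KG))$ component by component. Fix $v\in M_K$ and let $w\in M_k$ be the prime below it. Unwinding the definition of the diagonal map $\lambda_k$, the $w$-component $\lambda_k(g)_w$ is the image of $g$ under $\Lambda(kG)^\times\to\Lambda(k_wG)^\times$, i.e.\ post-composition with the natural inclusion $k\hookrightarrow k_w$; since $i_w$ extends this inclusion and $g$ is $k$-valued, $\lambda_k(g)_w(s)=i_w(g(s))$ for all $s\in G$. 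Hence, by the definition of $\nu$, $\nu(\lambda_k(g))_v(s)=\varepsilon_v(i_w(g(s)))$. On the other side, $\lambda_K(\iota_\Lambda(g))_v$ is the image of $\iota_\Lambda(g)$ under $\Lambda(KG)^\times\to\Lambda(K_vG)^\times$, that is, post-composition with $K\hookrightarrow K_v$; since $i_v$ extends this inclusion and $\iota_\Lambda(g)(s)=g(s)\in k\subseteq K$, we get $\lambda_K(\iota_\Lambda(g))_v(s)=i_v(g(s))$.

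So everything comes down to the identity $\varepsilon_v(i_w(x))=i_v(x)$ for all $x\in k$, which is where I would invoke the defining relation $i_v=\varepsilon_v\circ i_w\circ\overline{\gamma_v}^{-1}$ of Definition~\ref{embeddings}, equivalently $\varepsilon_v=i_v\circ\overline{\gamma_v}\circ i_w^{-1}$: then $\varepsilon_v(i_w(x))=i_v(\overline{\gamma_v}(x))=i_v(x)$, the last equality because $\overline{\gamma_v}\in\Omega_k$ fixes $k$ pointwise. Reassembling the componentwise equalities yields $\nu(\lambda_k(g))=\lambda_K(\iota_\Lambda(g))$, so the square commutes.

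I do not anticipate a real obstacle here: the content is bookkeeping with the compatible system of embeddings. The one place a hypothesis genuinely enters is in identifying $\Lambda(kG)^\times$ with $\Map(G,k)^\times$ — needed so that $g$ is $k$-valued and $\iota_\Lambda$ acts valuewise through $k\hookrightarrow K$ — which rests on $k$ containing all $\exp(G)$-th roots of unity; after that the proof is the single chain of identities above, powered by the defining formula for $i_v$ together with the fact that each $\overline{\gamma_v}$ lifts an element of $\Sigma$ and hence fixes $k$.
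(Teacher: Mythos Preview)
Your proof is correct and follows essentially the same route as the paper: a componentwise comparison at each $v\in M_K$, reducing to the identity $\varepsilon_v\circ i_w=i_v\circ\overline{\gamma_v}$ from Definition~\ref{embeddings} together with the fact that $\overline{\gamma_v}\in\Omega_k$ fixes $k$ pointwise. The paper's argument is slightly more compressed (writing the equalities as compositions of maps rather than evaluating at $s\in G$), but the content is identical.
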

\begin{proof}Recall that $\lambda_k$ and $\lambda_K$ denote the diagonal maps. Now, let $g\in\Lambda(kG)^\times$ be given. Also, let $v\in M_K$ and let $w\in M_k$ be the prime lying below $v$. Then

\begin{align*}
(\nu\circ\lambda_k)(g)_v
&=\varepsilon_v\circ i_w\circ g\\
&=i_v\circ\overline{\gamma_v}\circ g\\
&=i_v\circ g\\
&=(\lambda_K\circ\iota_\Lambda)(g)_v,
\end{align*}
where $\varepsilon_v\circ i_w=i_v\circ\overline{\gamma_v}$ holds by Definition~\ref{embeddings} and $\overline{\gamma_v}\circ g=g$ because $g$ takes values in $k$. So, we have $\nu\circ\lambda_k=\lambda_K\circ\iota_\Lambda$ and the diagram commutes.
\end{proof}

\begin{prop}\label{commutetheta}The diagram
\[
\begin{tikzpicture}[baseline=(current bounding box.center)]
\node at (0,2.5) [name=13] {$J(\Lambda(kG))$};
\node at (5.5,2.5) [name=14] {$J(\Lambda(KG))$};
\node at (0,0) [name=23] {$J(\mathcal{H}(kG))$};
\node at (5.5,0) [name=24] {$J(\mathcal{H}(KG))$};
\path[->, font=\normalsize]
(13) edge node[auto]{$\nu$} (14)
(23) edge node[below]{$\mu$} (24)
(14) edge node[right]{$\Theta^t_{K}$} (24)
(13) edge node[left]{$\Theta^t_{k}$} (23);
\end{tikzpicture}
\]
commutes.
\end{prop}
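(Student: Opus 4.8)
The plan is to check the asserted identity one place at a time. Since $\nu$, $\mu$, and $\Theta^t$ are all defined componentwise (for $\Theta^t$ see Definition~\ref{Theta}), it suffices to fix $v\in M_K$ together with the prime $w\in M_k$ lying below it and to prove that for every $g_w\in\Lambda(k_wG)^\times$ one has
\[
\Theta^t_{K_v}(\varepsilon_v\circ g_w)=r_G(\varepsilon_v\circ a_w),
\]
where $a_w\in\Map(G,k_w^c)$ is any element with $r_G(a_w)=\Theta^t_{k_w}(g_w)$ (such an $a_w$ exists by (\ref{global})). Indeed, for $g\in J(\Lambda(kG))$ with $w$-component $g_w$, the left-hand side is the $v$-component of $(\Theta^t_K\circ\nu)(g)$ and the right-hand side is the $v$-component of $(\mu\circ\Theta^t_k)(g)$.

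The crux is that the isomorphism $\varepsilon_v\colon k_w^c\to K_v^c$ respects the chosen roots of unity on $\exp(G)$-torsion: by Definition~\ref{embeddings} we have $i_v=\varepsilon_v\circ i_w\circ\overline{\gamma_v}^{-1}$, and since $\overline{\gamma_v}\in\Omega_k$ while $k$ contains all $\exp(G)$-th roots of unity, $\overline{\gamma_v}$ fixes every value $\chi(s)$ for $\chi\in\widehat{G}$ and $s\in G$, so that $i_v(\chi(s))=\varepsilon_v(i_w(\chi(s)))$. Recall the identification (\ref{iden}), under which an element $r_G(a)\in\mathcal{H}(FG)$ corresponds to the restriction to $A_{\widehat{G}}$ of the character $\chi\mapsto\sum_{s\in G}a(s)\chi(s)^{-1}$ (cf.\ (\ref{resolvent})); since $\mathbf{r}_G(\varepsilon_v\circ a)=\varepsilon_v(\mathbf{r}_G(a))$ when $\varepsilon_v$ acts coefficientwise, the compatibility above shows that the $v$-component $r_G(a_w)\mapsto r_G(\varepsilon_v\circ a_w)$ of $\mu$ is exactly postcomposition with $\varepsilon_v$ from $\Hom_{\Omega_{k_w}}(A_{\widehat{G}},(k_w^c)^\times)$ to $\Hom_{\Omega_{K_v}}(A_{\widehat{G}},(K_v^c)^\times)$. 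Likewise, because $k$ (hence $k_w$ and $K_v$) contains all $\exp(G)$-th roots of unity, the relevant cyclotomic characters are trivial (Definition~\ref{cyclotomic}), so $G(-1)=G$ with trivial action locally, the identification $\Lambda(F_vG)^\times=\Hom_{\Omega_{F_v}}(\mathbb{Z}G(-1),(F_v^c)^\times)$ amounts to $\mathbb{Z}$-linearly extending maps $G\to F_v^\times$, and under it the $v$-component $g_w\mapsto\varepsilon_v\circ g_w$ of $\nu$ is again postcomposition with $\varepsilon_v$.

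Granting these two descriptions, the identity falls out of the fact that $\Theta^t_F$ is precomposition with the Stickelberger map $\Theta\colon A_{\widehat{G}}\to\mathbb{Z}G(-1)$ (well-defined by Proposition~\ref{A-ZG} and $\Omega_F$-equivariant by Proposition~\ref{eqvariant}), together with the observation that $\Theta$ is one and the same homomorphism whether formed over $k_w$ or over $K_v$: the integers $\upsilon(\chi,s)$ entering the Stickelberger pairing are read off the compatible roots of unity in $\mathbb{Q}^c$, and we chose $\{i_w(\zeta_n)\}$ and $\{i_v(\zeta_n)\}$ as the compatible sets in $k_w^c$ and $K_v^c$. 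Hence, writing $\Theta^t_{k_w}(g_w)=r_G(a_w)$,
\[
\Theta^t_{K_v}(\varepsilon_v\circ g_w)=\varepsilon_v\circ(g_w\circ\Theta)=\varepsilon_v\circ\Theta^t_{k_w}(g_w)=r_G(\varepsilon_v\circ a_w),
\]
where the first equality uses the description of $\nu$ and the definition of $\Theta^t_{K_v}$ as precomposition with $\Theta$ (plus associativity of composition), the second is the definition of $\Theta^t_{k_w}$, and the third uses the description of $\mu$. As $v$ was arbitrary, the diagram commutes. I do not anticipate a real obstacle here; the only delicate point is the root-of-unity compatibility $i_v(\zeta)=\varepsilon_v(i_w(\zeta))$ for $\exp(G)$-th roots of unity $\zeta$, which is precisely where the standing hypothesis that $k$ contains all $\exp(G)$-th roots of unity — and the fact that $\overline{\gamma_v}\in\Omega_k$ — enter.
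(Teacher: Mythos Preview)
Your proposal is correct and follows essentially the same approach as the paper: both reduce to a local check at each $v\in M_K$, identify the two sides via the Hom-description (\ref{iden}), and isolate the one nontrivial point, namely that $\varepsilon_v(i_w(\zeta_{|s|}))=i_v(\zeta_{|s|})$ because $\overline{\gamma_v}\in\Omega_k$ fixes all $\exp(G)$-th roots of unity. The only stylistic difference is that the paper carries out the verification by evaluating both sides on an arbitrary $\psi\in A_{\widehat{G}_v}$ and explicitly checking $\langle\varepsilon_v^{-1}\circ\chi,s\rangle=\langle\chi,s\rangle$, whereas you package the same computation as ``postcomposition with $\varepsilon_v$ commutes with precomposition with $\Theta$''; the underlying content is identical.
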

\begin{proof}Let $g\in J(\Lambda(kG))$ be given. Also, let $v\in M_K$ and let $w\in M_k$ be the \par\noindent prime lying below $v$. On one hand, we have
\begin{equation}\label{exp1}
(\Theta_K^t\circ\nu)(g)_v=\Theta^t_{K}(\varepsilon_v\circ g_w).
\end{equation}
On the other hand, let $r_G(a_w)\in\mathcal{H}(k_wG)$ be such that $\Theta^t_{k}(g_w)=r_G(a_w)$, so
\begin{equation}\label{exp2}
(\mu\circ\Theta^t_{k})(g)_v=r_G(\varepsilon_v\circ a_w).
\end{equation}
Let $\widehat{G}_w$ denote the group of irreducible $k_w^c$-valued characters on $G$ and recall the notation $A_{\widehat{G}_w}$ from (\ref{SG}). By the identification
\[
\mathcal{H}(k_wG)=\Hom_{\Omega_{k_w}}(A_{\widehat{G}_w},(k_w^c)^\times)
\]
in (\ref{iden}), we have that $r_G(a_w)(\psi)=\Theta^t_{k}(g)(\psi)$ for all $\psi\in A_{\widehat{G}_w}$. Similarly, let $\widehat{G}_v$ denote the group of irreducible $K_v^c$-valued characters on $G$.  We will use the identification
\[
\mathcal{H}(K_vG)=\Hom_{\Omega_{K_v}}(A_{\widehat{G}_v},(K_v^c)^\times)
\]
given in (\ref{iden}) to show that the expressions in (\ref{exp1}) and (\ref{exp2}) are equal.

To that end, let $\psi\in A_{\widehat{G}_v}$ and write $\psi=\sum_\chi n_\chi\chi$. Define 
\[
\varepsilon_v^{-1}\circ\psi:=\sum_\chi n_\chi(\varepsilon_v^{-1}\circ\chi),
\]
which clearly lies in $A_{\widehat{G}_w}$. Since $r_G(a_w)=\Theta^t_{k}(g_w)$, we then deduce that
\begin{align}\label{computation}
r_G(\varepsilon_v\circ a_w)(\psi)
&=\varepsilon_v(r_G(a_w)(\varepsilon_v^{-1}\circ\psi))\\\notag
&=\varepsilon_v\Bigg(\prod_{s\in G}g_w(s)^{\langle\varepsilon_v^{-1}\circ\psi,s\rangle}\Bigg)\\\notag
&=\prod_{s\in G}(\varepsilon_v\circ g_w)(s)^{\langle\psi,s\rangle}\\\notag
&=\Theta^t_{K}(\varepsilon_v\circ g_w)(\psi),
\end{align}
where the third equality holds because $\langle\varepsilon_v^{-1}\circ\psi,s\rangle=\langle\psi,s\rangle$ for all $s\in G$. To see why, observe that clearly it suffices to show that $\langle\varepsilon_v^{-1}\circ\chi,s\rangle=\langle\chi,s\rangle$ holds for all $\chi\in\widehat{G}_v$ and $s\in G$. It will be helpful to recall that we chose the same compatible set $\{\zeta_n:n\in\mathbb{Z}^+\}$ of roots of unity in $\mathbb{Q}^c$ for $k$ and $K$. Also, we chose $\{i_v(\zeta_n):n\in\mathbb{Z}^+\}$ and $\{i_w(\zeta_n):n\in\mathbb{Z}^+\}$ to be the compatible sets of roots of unity in $K_v^c$ and $k_w^c$, respectively.

Now, let $\chi\in\widehat{G}_v$ and $s\in G$ be given. Let $\upsilon=\upsilon(\chi,s)$ be as in Definition~\ref{Stickel}. Then, we have $\chi(s)=i_v(\zeta_{|s|})^{\upsilon}$ and $\langle\chi,s\rangle=\upsilon/|s|$. Observe that
\begin{align*}
(\varepsilon_v^{-1}\circ\chi)(s)
&=(\varepsilon_v^{-1}\circ i_v)(\zeta_{|s|})^\upsilon\\
&=(i_w\circ\overline{\gamma_v}^{-1})(\zeta_{|s|})^\upsilon\\
&=i_w(\zeta_{|s|})^\upsilon,
\end{align*}
where $\varepsilon_v^{-1}\circ i_v=i_w\circ\overline{\gamma_v}^{-1}$  by Definition~\ref{embeddings} and $\overline{\gamma_v}^{-1}(\zeta_{|s|})=\zeta_{|s|}$ \mbox{because $k$} contains all $\exp(G)$-th roots of unity. Then, by Definition~\ref{Stickel}, we deduce that $\langle\varepsilon_v^{-1}\circ\chi,s\rangle=\upsilon/|s|$ as well. This shows that the third equality in (\ref{computation}) indeed holds, whence (\ref{exp1}) and (\ref{exp2}) are equal. It follows that $\Theta^t_{K}\circ\nu=\mu\circ\Theta^t_{k}$ and so the diagram commutes.
\end{proof}

\subsection{Preliminary Definitions}\label{s:7.3}In this subsection, we will let $\Sigma$ act trivially on $G$ on the left. Then, the left $\Omega_k$-action on $G$ induced by the natural quot- ient map $\Omega_k\longrightarrow\Sigma$ and the given left $\Sigma$-action on $G$ is trivial, which agrees with our definition in Subsection~\ref{notation} above. So, analogous to (\ref{toprow}), from the Hochschild-Serre spectral sequence associated to the group extension
\[
\begin{tikzcd}[column sep=1cm, row sep=1.5cm]
1 \arrow{r} &
\Omega_K \arrow{r} &
\Omega_k \arrow{r} &
\Sigma\arrow{r} &
1,
\end{tikzcd}
\]
we obtain an exact sequence
\begin{equation}\label{toprow'}
\begin{tikzcd}[column sep=1.5cm]
\Hom(\Omega_k,G)\arrow{r}{\mbox{res}} & \Hom(\Omega_K,G)^\Sigma\arrow{r}[font=\normalsize]{tr} & H^2(\Sigma,G).
\end{tikzcd}
\end{equation}
Here $\mbox{res}$ is given by restriction. The $\Sigma$-action on $\Hom(\Omega_K,G)$ and the \emph{transgression map} $tr$ are defined as in Definition~\ref{tr}. To be precise, for each $\gamma\in\Sigma$, choose and fix a lift $\overline{\gamma}\in\Omega_k$ of $\gamma$.

\begin{definition}\label{tr'}Given $h\in\Hom(\Omega_K,G)$ and $\gamma\in\Sigma$, define
\[
(h\cdot\gamma)(\omega):= \gamma^{-1}\cdot h(\lift{\gamma}\omega\lift{\gamma}^{-1})\hspace{1cm}\mbox{for all $\omega\in\Omega_K$}.
\]
The \emph{transgression map} $tr:\Hom(\Omega_K,G)^\Sigma\longrightarrow H^2(\Sigma,G)$ is defined by
\[
tr(h):=[(\gamma,\delta)\mapsto h((\lift{\gamma})(\lift{\delta})(\lift{\gamma\delta})^{-1})],
\]
where $[-]$ denotes the cohomology class. These definitions do not depend on the choice of the lifts $\lift{\gamma}$ for $\gamma\in\Sigma$.
\end{definition}

If we identify $\Hom(\Omega_K^t,G)$ as the subset of $\Hom(\Omega_K,G)$ consisting of the tame homomorphisms via Remark~\ref{tamesubgp}, then the $\Sigma$-action on $\Hom(\Omega_K^t,G)$ and the transgression map on $\Hom(\Omega_K^t,G)^\Sigma$ induced by Definition~\ref{tr'}  agree with those given in Definition~\ref{tr}. In particular, the identical notation does not cause any confusion.

\begin{definition}Define (recall Definition~\ref{ha})
\begin{align*}
\mathcal{H}_\Sigma(KG)&:=\{r_G(a)\in\mathcal{H}(KG)\mid h_a\in \Hom(\Omega_K,G)^\Sigma\};\\
\mathcal{H}_s(KG)&:=\{r_G(a)\in\mathcal{H}(KG)\mid h_a\in \Hom(\Omega_K,G)^\Sigma)\mbox{ and }tr(h_a)=1\}.
\end{align*}
It is clear that both of the sets above are subgroups of $\mathcal{H}(KG)$. 
\end{definition}

\begin{prop}\label{Hs}Assume that $k$ contains all $\exp(G)$-th roots of unity. Then, we have (recall Definitions~\ref{rag} and~\ref{Theta})
\[
(\Theta^t_{K}\circ\nu)(\lambda_k(\Lambda(kG)^\times))\subset\eta(\mathcal{H}_s(KG)).
\]
\end{prop}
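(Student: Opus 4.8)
The plan is to strip away the idelic wrapping and reduce the claim to a statement about a single reduced resolvend over $k$, and then invoke exactness of the top row (\ref{toprow'}). Fix $g\in\Lambda(kG)^\times$. By Proposition~\ref{commutenu} we have $\nu(\lambda_k(g))=\lambda_K(\iota_\Lambda(g))$, and applying the commutative diagram (\ref{Theta'}) for the field $K$ gives
\[
(\Theta^t_K\circ\nu)(\lambda_k(g))=(\Theta^t_K\circ\lambda_K)(\iota_\Lambda(g))=\eta\bigl(\Theta^t_K(\iota_\Lambda(g))\bigr),
\]
where on the right-hand side $\Theta^t_K$ denotes the global modified Stickelberger transpose $\Lambda(KG)^\times\to\mathcal{H}(KG)$ and $\eta=\eta_K$. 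Thus it suffices to show that $\Theta^t_K(\iota_\Lambda(g))$ lies in $\mathcal{H}_s(KG)$.

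Next I would identify this element with the corresponding element over $k$. Because we chose $k^c=K^c=\mathbb{Q}^c$ together with the same compatible system $\{\zeta_n\}$ of roots of unity, and because the hypothesis that $k$ contains all $\exp(G)$-th roots of unity makes $\Omega_k$ and $\Omega_K$ act trivially on $G(-1)$, the objects $\widehat G$, $A_{\widehat G}$, and the Stickelberger map $\Theta:A_{\widehat G}\to\mathbb{Z}G(-1)$ are literally the same whether formed over $k$ or over $K$; moreover $\iota_\Lambda(g)$ is just the function $g:G\to k^\times$ with codomain enlarged to $K^\times$. Consequently $\Theta^t_K(\iota_\Lambda(g))=\iota_\Lambda(g)\circ\Theta=g\circ\Theta=\Theta^t_k(g)$ as a homomorphism $A_{\widehat G}\to(\mathbb{Q}^c)^\times$, and this homomorphism is $\Omega_k$-equivariant, hence lies in the subgroup $\mathcal{H}(kG)\subseteq\mathcal{H}(KG)$. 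Write $r_G(a):=\Theta^t_k(g)$ for a suitable $a$, and let $h_a=\delta_K(r_G(a))$ be the homomorphism associated to it as in Definition~\ref{ha}, where $\delta_K$ is the connecting map of (\ref{exact rag}) for $K$. Since $r_G(a)$ already belongs to $\mathcal{H}(kG)$, the connecting map $\delta_k$ of (\ref{exact rag}) for $k$ produces $\widetilde h:=\delta_k(r_G(a))\in\Hom(\Omega_k,G)$, and the defining formula $\omega\mapsto\mathbf{r}_G(a)^{-1}(\omega\cdot\mathbf{r}_G(a))$, evaluated at $\omega\in\Omega_K\subseteq\Omega_k$, shows at once that $h_a=\widetilde h|_{\Omega_K}=\mbox{res}(\widetilde h)$. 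Therefore $h_a$ lies in the image of $\mbox{res}:\Hom(\Omega_k,G)\to\Hom(\Omega_K,G)^\Sigma$; by exactness of (\ref{toprow'}) this image is exactly $\ker(tr)$, so $h_a\in\Hom(\Omega_K,G)^\Sigma$ and $tr(h_a)=1$. Hence $r_G(a)=\Theta^t_K(\iota_\Lambda(g))\in\mathcal{H}_s(KG)$, and combining with the first paragraph yields $(\Theta^t_K\circ\nu)(\lambda_k(\Lambda(kG)^\times))\subseteq\eta(\mathcal{H}_s(KG))$, as desired.

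I expect the only real work to be the bookkeeping in the middle step: verifying that everything attached to $k$ (the chosen roots of unity, the group $A_{\widehat G}$, the map $\Theta$, and the connecting homomorphism $\delta$) sits compatibly inside the corresponding data for $K$, so that the single identity $\Theta^t_K(\iota_\Lambda(g))=\Theta^t_k(g)$ and the compatibility $\delta_K=\mbox{res}\circ\delta_k$ hold on the nose. Once those identifications are pinned down, the argument is purely formal — no estimate, approximation, or construction is required, in contrast to the proofs of Theorems~\ref{approx1} and~\ref{approx2}.
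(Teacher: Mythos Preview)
Your proposal is correct and follows essentially the same route as the paper's proof: both reduce via Proposition~\ref{commutenu} and diagram~(\ref{Theta'}) to showing $\Theta^t_K(\iota_\Lambda(g))\in\mathcal{H}_s(KG)$, then identify this element with $\Theta^t_k(g)\in\mathcal{H}(kG)\subset\mathcal{H}(KG)$ so that its associated homomorphism is a restriction from $\Omega_k$, and finally invoke exactness of~(\ref{toprow'}). The only cosmetic difference is that the paper packages the inclusion $\mathcal{H}(kG)\hookrightarrow\mathcal{H}(KG)$ as a map $\iota_{\mathcal{H}}$ and phrases the compatibility via the identification~(\ref{iden}), whereas you spell out the same compatibility directly in terms of $\delta_k$, $\delta_K$, and the formula in Definition~\ref{ha}.
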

\begin{proof}Because $k$ contains all $\exp(G)$-th roots of unity, the map $\nu$ is defined and results from Subsection~\ref{s:7.2} apply. Now, let $g\in\Lambda(kG)^\times$ be given. Recall that $\iota_\Lambda:\Lambda(kG)^\times\longrightarrow\lambda(KG)^\times$ denotes the map induced by the natural inclusion $k\longrightarrow K$. Then, we have
\[
(\Theta_{K}^t\circ\nu)(\lambda_k(g))
=(\Theta_{K}^t\circ\lambda_K)(\iota_\Lambda(g))
=(\eta\circ\Theta^t_{K})(\iota_\Lambda(g)),
\]
where $\nu\circ\lambda_k=\lambda_K\circ\iota_\Lambda$ by Proposition~\ref{commutenu} and $\Theta^t_{K}\circ\lambda_K=\eta\circ\Theta^t_{K}$ because diagram (\ref{Theta'}) commutes. It remains to show that $\Theta^t_{K}(\iota_\Lambda(g))\in\mathcal{H}_s(KG)$. 

Recall that $\mathcal{H}(kG)=((\mathbb{Q}^cG)^\times/G)^{\Omega_k}$ and $\mathcal{H}(KG)=((\mathbb{Q}^cG)^\times/G)^{\Omega_K}$ by definition. Using the identification in (\ref{iden}), we have a commutative diagram
\[
\begin{tikzpicture}[baseline=(current bounding box.center)]
\node at (0,2.5) [name=13] {$\Hom_{\Omega_k}(A_{\widehat{G}},(\mathbb{Q}^c)^\times)$};
\node at (5.5,2.5) [name=14] {$\Hom_{\Omega_K}(A_{\widehat{G}},(\mathbb{Q}^c)^\times)$};
\node at (0,0) [name=23] {$\mathcal{H}(kG)$};
\node at (5.5,0) [name=24] {$\mathcal{H}(KG)$};
\path[->]
(23) edge node[below]{$\iota_\mathcal{H}$} (24)
(13) edge node[auto] {} (14);
\draw[double equal sign distance]
(13) -- (23)
(14) -- (24);
\end{tikzpicture},
\]
where the two horizontal maps are the obvious inclusion maps induced by the inclusion $\Omega_K\subset\Omega_k$. If $\Theta^t_{k}(g)=r_G(a)$, then clearly $\Theta^t_{K}(\iota_\Lambda(g))=\iota_\mathcal{H}(r_G(a))$.  In particular, the homomorphism $h$ associated to $\Theta^t_{K}(\iota_\Lambda(g))$ is equal to $res(h_a)$. Since (\ref{toprow'}) is exact, this shows that $tr(h)=1$ and so $\Theta^t_{K}(\iota_\Lambda(g))\in\mathcal{H}_s(KG)$. This proves the claim.
\end{proof}

\subsection{Proof of Theorem~\ref{thm} (a)}\label{s:7.4}

\begin{proof}Let $\uprho_\Sigma$ denote the composition of the homomorphism $rag$ from Definition~\ref{rag} followed by the natural quotient map
\[
J(\mathcal{H}(KG))\longrightarrow\frac{J(\mathcal{H}(KG))}{\eta(\mathcal{H}_\Sigma(KG))U(\mathcal{H}(\cO_KG))(\Theta^t_{K}\circ\nu)(J(\Lambda(kG)))}.
\]
We will show that $R_\Sigma(\cO_KG)_V $ is a subgroup of $\mbox{Cl}(\cO_KG)$ by showing that
\begin{equation}\label{jschar}
j^{-1}(R_\Sigma(\cO_KG)_V )=\ker(\upvarrho_\Sigma),
\end{equation}
or equivalently, for any $c\in J(KG)$, we have $j(c)\in R_\Sigma(\cO_KG)_V $ if and only if
\begin{equation}\label{charV}
rag(c)\in\eta(\mathcal{H}_\Sigma(KG))U(\mathcal{H}(\cO_KG))(\Theta^t_{K}\circ\nu)(J(\Lambda(kG))).
\end{equation}

To that end, let $c\in J(KG)$ be given. First, assume that (\ref{charV}) holds, so
\begin{equation}\label{charV'}
rag(c)=\eta(r_G(b))^{-1}u(\Theta^t_{K}\circ\nu)(g)
\end{equation}
for some $r_G(b)\in\mathcal{H}_\Sigma(KG)$, $u\in U(\mathcal{H}(\cO_KG))$, and $g\in J(\Lambda(kG))$. Let $\mathfrak{m}$ be \mbox{an ideal in $\cO_k$.} Then, by Theorem~\ref{approx2}, there exists $f\in\mathfrak{F}_{k}$ such that $f_w=1$ for all primes $w\in M_k$ which are ramified in $K/k$ and
\[
g\equiv f\hspace{1cm}(\mbox{mod }\lambda_k(\Lambda(kG)^\times)U_\mathfrak{m}'(\Lambda(\cO_kG))).
\]
Choosing $\mathfrak{m}$ to be divisible by $|G|$ and $\exp(G)^2$, the above then yields
\[
\Theta^t_{k}(g)\equiv\Theta^t_{k}(f)\hspace{1cm}(\mbox{mod }\Theta^t_{k}(\lambda_k(\Lambda(kG)^\times))U(\mathcal{H}(\cO_kG)))
\]
by Theorem~\ref{approx1}. Since $\mu\circ\Theta^t_{k}=\Theta^t_{K}\circ\nu$ by Proposition~\ref{commutetheta}, applying $\mu$ to the above equation and using  by Proposition~\ref{Hs}, we then obtain
\begin{equation}\label{f=g'}
(\Theta^t_{K}\circ\nu)(g)\equiv(\Theta^t_{K}\circ\nu)(f)
\hspace{1cm}(\mbox{mod }\eta(\mathcal{H}_s(KG))U(\mathcal{H}(\cO_KG))).
\end{equation}
Thus, by changing $b$ and $u$ in (\ref{charV'}) if necessary, we may assume that $g=f$. Notice that $\nu(f)_v=1$ for all $v\in V$ and that $\nu(f)\in\mathfrak{F}_{K}$ by Proposition~\ref{nuF}. Hence, if $h:=h_b$ is the homomorphism associated to $r_G(b)$, then $h$ is tame with $h_v$ unramified for all $v\in V$ and $j(c)=\cl(\cO_h)$ by Theorem~\ref{char1}. Since $r_G(b)\in\mathcal{H}_\Sigma(KG)$, we know that $h$ is $\Sigma$-invariant, and so $j(c)\in R_\Sigma(\cO_KG)_V $.

Conversely, assume that $j(c)=\cl(\cO_h)$ for some $h\in \Hom(\Omega_K^t,G)^\Sigma_V$ and let $K_h=KG\cdot b$. Then, we know from Theorem~\ref{char1} that there exists $c'\in J(KG)$ with $j(c')=\cl(\cO_h)$ such that
\begin{equation}\label{ragc'}
rag(c')=\eta(r_G(b))^{-1}u\Theta^t_{K}(f')
\end{equation}
for some $u\in U(\mathcal{H}(\cO_KG))$ and $f'\in\mathfrak{F}_K$. Moreover, for each $v\in M_K$, we have $f_v'=f_{K_v,s_v}'$ for $s_v=h_v(\sigma_{K_v})$, and $s_v=1$ if $v\in V$. Since $\Sigma$ \mbox{acts trivially on $G$,} by Proposition~\ref{hsigma}, we have $s_v=s_{v_w}$ for all $v\in M_K$ and $w\in M_k$ with $w$ lying below $v$ (recall Definition~\ref{7setup}). Proposition~\ref{nuF'} then implies that $f'=\nu(g)$ for some $g\in J(\Lambda(kG))$. Since $j(c)=\cl(\cO_h)$ also, by Theorem~\ref{isoLFCG}, we have
\[
c\equiv c'\hspace{1cm}(\mbox{mod }\partial((KG)^\times)U(\cO_KG)).
\]
Clearly $rag((KG)^\times)\subset\mathcal{H}_s(KG)$. We may then write (\ref{ragc'}) as
\begin{equation}\label{charV''}
rag(c)=\eta(r_G(b)r_G(b'))^{-1}uu'(\Theta^t_{K}\circ\nu)(g)
\end{equation}
for some $r_G(b')\in\mathcal{H}_s(KG)$ and $u'\in\mathcal{H}(\cO_KG)$. Notice that $r_G(b)\in\mathcal{H}_\Sigma(KG)$ because $h$ is $\Sigma$-invariant, and so (\ref{charV}) holds. This proves (\ref{jschar}). It remains to show the existence of $h'\in\Hom(\Omega_K^t,G)_V^\Sigma$ satisfying (1) to (4).

Let $T$ be a finite set of primes in $\cO_K$. First, notice that the same discussion following (\ref{charV'}) shows that there exists $f\in\mathfrak{F}_k$ such that (\ref{f=g'}) holds. Thus, by changing $b'$ and $u'$ in (\ref{charV''}) if necessary, we may assume that $g=f$. By Theorem~\ref{approx2}, we may also assume that $f_w=1$ for all $w\in M_k$ lying below the primes in $V\cup T$, and that $f_s\neq 1$ for all $s\in G$ with $s\neq 1$ (notice that $\Omega_k$ acts trivially on $G(-1)$ because $k$ contains all $\exp(G)$-th roots of unity). By Proposition~\ref{nuF}, we then have $\nu(f)_v=1$ for all $v\in V\cup T$ and $\nu(f)\in\mathfrak{F}_K$.

Now, let $h'$ denote the homomorphism associated to $r_G(b)r_G(b')$. By (\ref{charV''}) and Theorem~\ref{char1}, we see that $h'$ is tame \mbox{with $h'_v$ unramified for all $v\in V\cup T$} and that $j(c)=\cl(\cO_h)$, whence (2) and (3) hold. Because $r_G(b')\in\mathcal{H}_s(KG)$ and $h=h_b$ is $\Sigma$-invariant, it is clear that $h'\in\Hom(\Omega_K^t,G)^\Sigma_V$ and we have $tr(h')=tr(h)$, so (4) holds as well. Finally, for each $s\in G$ with $s\neq1$, we have $f_s\neq 1$ by choice and so $f_w=f_{k_w,s}$ for some $w\in M_k$. We then see \mbox{that $\nu(f)_v=f_{K_v,s}$ by} Proposition~\ref{nuF} and so $h_v'(\sigma_{K_v})=s$ by Theorem~\ref{char1}. This means that $h'$ is surjective and so $K_{h'}$ is a field, as claimed in (1).

Because $\gal$ is weakly multiplicative (recall (\ref{galweak})), what we have proved above implies that $R_s(\cO_KG)_V$ is closed under multiplication. Since $\Cl(\cO_KG)$ is finite, it follows that $R_s(\cO_KG)_V$ is a subgroup of $\mbox{Cl}(\cO_KG)$ as well. This completes the proof of the theorem.
\end{proof}

\subsection{The Quotient $R_\Sigma(\cO_KG)_V/R_s(\cO_KG)_V$}\label{s:7.4} In what follows, assume all of the hypotheses stated in Theorem~\ref{thm}. The sets $R_\Sigma(\cO_KG)_V $ and $R_s(\cO_KG)_V $ are then subgroups of $\mbox{Cl}(\cO_KG)$ by Theorem~\ref{thm} (a). We are \mbox{interested in the} group structure of their quotient $R_\Sigma(\cO_KG)_V /R_s(\cO_KG)_V$ and its relation to that of $tr(\Hom(\Omega_K^t,G)_V^\Sigma)$. The proposition below is essentially a corollary of Theorem~\ref{thm} (a), and will in turn allow us to prove Theorem~\ref{thm} (b).

\begin{prop}\label{Vprop}Let $h,h_1,h_2\in\Hom(\Omega_K^t,G)^\Sigma_V$. 
\begin{enumerate}[(a)]
\item $\cl(\cO_{h_1})\cl(\cO_{h_2})=\cl(\cO_{h_1h_2h_s})$ for some $h_s\in\Hom(\Omega_K^t,G)^\Sigma_V$ with $tr(h_s)=1$.
\item $\cl(\cO_h)\cl(\cO_{h^{-1}})\equiv1$ $(\mbox{mod }R_s(\cO_KG)_V )$.
\item If $tr(h_1)=tr(h_2)$, then $\cl(\cO_{h_1})\equiv\cl(\cO_{h_2})$ (mod $R_s(\cO_KG)_V$).
\end{enumerate}
\end{prop}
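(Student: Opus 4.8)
The plan is to deduce all three parts from the characterization \eqref{jschar} established in the proof of Theorem \ref{thm} (a), namely that $j^{-1}(R_\Sigma(\cO_KG)_V)=\ker(\upvarrho_\Sigma)$, together with the refinement statement (1)--(4) in Theorem \ref{thm} (a). For part (a), I would pick resolvends $r_G(b_1)\in\mathcal H_\Sigma(KG)$ and $r_G(b_2)\in\mathcal H_\Sigma(KG)$ associated to $h_1$ and $h_2$, and use Theorem \ref{char1} to write $\cl(\cO_{h_i})=j(c_i)$ with $rag(c_i)=\eta(r_G(b_i))^{-1}u_i(\Theta^t_K\circ\nu)(g_i)$ for suitable $u_i\in U(\mathcal H(\cO_KG))$ and $g_i\in J(\Lambda(kG))$ (using Proposition \ref{nuF'} to bring the prime-$\mathfrak F$-element idele into the image of $\nu$, exactly as in the proof of Theorem \ref{thm} (a)). Multiplying the two relations, $rag(c_1c_2)=\eta(r_G(b_1)r_G(b_2))^{-1}u_1u_2(\Theta^t_K\circ\nu)(g_1g_2)$, which again has the form \eqref{charV}; since $rag$ is a homomorphism (and $j$ too) we get $\cl(\cO_{h_1})\cl(\cO_{h_2})=j(c_1c_2)\in R_\Sigma(\cO_KG)_V$, so by \eqref{jschar} this class equals $\cl(\cO_{h''})$ for some $h''\in\Hom(\Omega_K^t,G)^\Sigma_V$. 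I would then read off from the shape of the relation that the homomorphism $h''$ one obtains is associated to $r_G(b_1)r_G(b_2)$, whose associated homomorphism is $h_1h_2$ up to an element of $\mathcal H_s(KG)$: more precisely, since $r_G(b_i)$ has associated homomorphism $h_i$ and $\delta$ in \eqref{exact rag} is a homomorphism, $r_G(b_1)r_G(b_2)$ has associated homomorphism $h_1h_2$. So I can take $h''=h_1h_2$ directly, and then use Theorem \ref{thm} (a)(1)--(4) once more: applying the refinement to $h''=h_1h_2$ produces $h_1h_2h_s$ with $tr(h_s)=1$ and the same class, giving (a).

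For part (b), apply (a) with $h_1=h$ and $h_2=h^{-1}$: then $\cl(\cO_h)\cl(\cO_{h^{-1}})=\cl(\cO_{hh^{-1}h_s})=\cl(\cO_{h_s})$ for some $h_s\in\Hom(\Omega_K^t,G)^\Sigma_V$ with $tr(h_s)=1$, and $\cl(\cO_{h_s})\in R_s(\cO_KG)_V$ by definition, so $\cl(\cO_h)\cl(\cO_{h^{-1}})\equiv 1\pmod{R_s(\cO_KG)_V}$. (I should note that here I use that $h^{-1}\in\Hom(\Omega_K^t,G)^\Sigma_V$, which is immediate since $G$ is abelian and inversion commutes with the $\Sigma$-action and preserves the ramification set.) For part (c), suppose $tr(h_1)=tr(h_2)$. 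Using (a) and (b), compute $\cl(\cO_{h_1})\equiv \cl(\cO_{h_1})\cl(\cO_{h_2})\cl(\cO_{h_2^{-1}})\equiv \cl(\cO_{h_1h_2^{-1}h_s})\pmod{R_s(\cO_KG)_V}$ for an appropriate $h_s$ with trivial transgression; since $tr$ is a homomorphism on $\Hom(\Omega_K^t,G)^\Sigma$, we have $tr(h_1h_2^{-1}h_s)=tr(h_1)tr(h_2)^{-1}=1$, so $\cl(\cO_{h_1h_2^{-1}h_s})\in R_s(\cO_KG)_V$ and hence $\cl(\cO_{h_1})\equiv\cl(\cO_{h_2})\pmod{R_s(\cO_KG)_V}$.

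The main obstacle I expect is part (a), specifically the bookkeeping needed to see that the class produced by multiplying the two idelic relations is realized by a homomorphism whose transgression differs from $tr(h_1)tr(h_2)$ by the transgression of an element of $\mathcal H_s(KG)$ — i.e. correctly tracking the $\Sigma$-equivariance and the transgression through the product of resolvends. The key facts that make this go through are that $\delta$ in \eqref{exact rag} is a homomorphism (so multiplying resolvends multiplies associated homomorphisms), that $\mathcal H_\Sigma(KG)$ and $\mathcal H_s(KG)$ are subgroups of $\mathcal H(KG)$, that $tr$ is a homomorphism, and that the image $(\Theta^t_K\circ\nu)(J(\Lambda(kG)))$ lands in $\eta(\mathcal H_s(KG))\cdot U(\mathcal H(\cO_KG))$ modulo the relevant groups by Proposition \ref{Hs} and the approximation argument in the proof of Theorem \ref{thm} (a). Once that is in place, parts (b) and (c) are formal consequences.
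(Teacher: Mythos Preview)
Your argument for parts (b) and (c) matches the paper's almost verbatim, except that in (c) your displayed chain should start with $\cl(\cO_{h_1})\cl(\cO_{h_2})^{-1}$, not $\cl(\cO_{h_1})$; as written, your first congruence uses (b) to multiply by $\cl(\cO_{h_2})\cl(\cO_{h_2^{-1}})\equiv 1$, but then your conclusion would be $\cl(\cO_{h_1})\in R_s(\cO_KG)_V$, which is not what you want. The paper writes $\cl(\cO_{h_1})\cl(\cO_{h_2})^{-1}\equiv\cl(\cO_{h_1})\cl(\cO_{h_2^{-1}})\equiv\cl(\cO_{h_1h_2^{-1}h_s})$, which is surely what you intended.

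For part (a), your approach differs from the paper's. You propose to multiply the two idelic relations and re-run the approximation argument from the proof of Theorem~\ref{thm}~(a) to extract a realizing homomorphism of the form $h_1h_2h_s$. This works in principle, but your exposition is muddled: the sentence ``I can take $h''=h_1h_2$ directly'' is not justified, since the relation $rag(c_1c_2)=\eta(r_G(b_1)r_G(b_2))^{-1}u_1u_2(\Theta^t_K\circ\nu)(g_1g_2)$ does not yet have $g_1g_2\in\mathfrak{F}_K$, so Theorem~\ref{char1} does not give $j(c_1c_2)=\cl(\cO_{h_1h_2})$; the approximation step modifies $r_G(b_1)r_G(b_2)$ by an element of $\mathcal{H}_s(KG)$, and only \emph{after} that does one read off $j(c_1c_2)=\cl(\cO_{h_1h_2h_s})$. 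Your subsequent appeal to the refinement (1)--(4) of Theorem~\ref{thm}~(a) applied to $h_1h_2$ is then beside the point.

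The paper's argument for (a) is shorter and avoids reopening the idelic machinery: it applies Theorem~\ref{thm}~(a)~(2)--(4) to $h_2$ with $T=d(h_1)$, producing $h_2'\in\Hom(\Omega_K^t,G)^\Sigma_V$ with $\cl(\cO_{h_2'})=\cl(\cO_{h_2})$, $tr(h_2')=tr(h_2)$, and $d(h_2')\cap d(h_1)=\emptyset$. Then the weak multiplicativity \eqref{galweak} of $\gal$ gives $\cl(\cO_{h_1})\cl(\cO_{h_2})=\cl(\cO_{h_1})\cl(\cO_{h_2'})=\cl(\cO_{h_1h_2'})$, and one takes $h_s:=h_2^{-1}h_2'$. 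Your route would also succeed once the bookkeeping is fixed, but it duplicates work already packaged into Theorem~\ref{thm}~(a); the paper's route isolates the single property (disjointness of ramification) that makes the product formula go through.
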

\begin{proof}By Theorem~\ref{thm} (a), there exists $h_2'\in\Hom(\Omega_K^t,G)^\Sigma_V$ such that
\[
\cl(\cO_{h_2'})=\cl(\cO_{h_2}),
\hspace{0.75cm}
tr(h_2')=tr(h_2),
\hspace{0.75cm}d(h_2')\cap d(h_1)=\emptyset
\]
(recall (\ref{d(h)})). Since $\gal$ is weakly multiplicative (recall (\ref{galweak})), we deduce that
\[
\cl(\cO_{h_1})\cl(\cO_{h_2})
=\cl(\cO_{h_1h_2'})
=\cl(\cO_{h_1h_2h_s}),
\]
where $h_s:=h_2^{-1}h_2'$ and clearly $tr(h_s)=1$. This shows that (a) holds, and (b) follows from applying (a) to $h_1=h$ and $h_2=h^{-1}$. Now, observe that (a) and (b) together imply that
\[
\cl(\cO_{h_1})\cl(\cO_{h_2})^{-1}
\equiv\cl(\cO_{h_1})\cl(\cO_{h_2^{-1}})
\equiv \cl(\cO_{h_1h_2^{-1}h_s})\hspace{0.75cm}(\mbox{mod }R_s(\cO_KG)_V )
\]
for some $h_s\in\Hom(\Omega_K^t,G)_V^\Sigma$ with $tr(h_s)=1$. Hence, if $tr(h_1)=tr(h_2)$, then $tr(h_1h_2^{-1}h_s)=1$ and we deduce that $\cl(\cO_{h_1})\equiv\cl(\cO_{h_2})$ (mod $R_s(\cO_KG)_V $), as desired.
\end{proof}

\subsection{Proof of Theorem~\ref{thm} (b)}\label{s:7.5}
\begin{proof}The map $\upphi$ is well-defined by Proposition~\ref{Vprop} (c). To show that it is in fact a homomorphism, let $h_1,h_2\in\Hom(\Omega_K^t,G)_V^\Sigma$ be given. Moreover, let $h_s\in \Hom(\Omega_K^t,G)_V^\Sigma$ be such that $tr(h_s)=1$ and $\cl(\cO_{h_1})\cl(\cO_{h_2})=\cl(\cO_{h_1h_2h_s})$, which exists by Proposition~\ref{Vprop} (a). Then, we have
\[
\upphi(tr(h_1))\upphi(tr(h_2))
=\upphi(tr(h_1h_2h_s))
=\upphi(tr(h_1h_2))
\]
and so $\upphi$ is indeed a homomorphism. This proves the first claim.

To prove the second claim, let $h\in\Hom(\Omega_K^t,G)_V^\Sigma$ be such that $\upphi(tr(h))=1$, that is $\cl(\cO_h)\in R_s(\cO_KG)_V$. Since $R_s(\cO_KG)_V$ is a subgroup of $\Cl(\cO_KG)$ by Theorem~\ref{thm} (a), we have $\cl(\cO_h)^{-1}=\cl(\cO_{h_s})$ for some $h_s\in\Hom(\Omega_K^t,G)_V^\Sigma$ with $tr(h_s)=1$. In particular, we may assume that $d(h_s)\cap d(h)=\emptyset$ (recall (\ref{d(h)})). Since $\gal$ is weakly multiplicative (recall (\ref{galweak})), we then deduce that
\[
1=\cl(\cO_h)\cl(\cO_{h_s})=\cl(\cO_{hh_s}).
\]
Now, recall Theorem~\ref{commutes}. Because $\xi$ is a homomorphism, the above implies that $(\xi\circ\gal)(hh_s)=1$ and hence $(i^*\circ tr)(hh_s)=1$. If $i^*$ is injective, then this yields $tr(hh_s)=1$ and so $tr(h)=1$. Hence, in this case the map $\upphi$ is injective and in particular is an isomorphism.
\end{proof}

\section{Acknowledgments}

I would like to thank my advisor Professor Adebisi Agboola for bringing this problem to my attention and for reading a draft of this paper.


\end{document}